\tikzstyle cross=[preaction={draw=white,-,line width=4pt},thick]
\tikzstyle bolito=[draw,circle,fill=black,minimum size=3pt,inner sep=0pt]
\tikzstyle punto=[draw,circle,fill=black,minimum size=1pt,inner sep=0pt]
\tikzstyle fino=[line width=0.5pt]
\newcommand{\tik}[2]{%
 \begin{tikzpicture}[scale=#1,baseline={([yshift=-3pt]current bounding box.center)},label distance=-3pt] #2 \end{tikzpicture}}
\newcommand{\bydef}{:=}
\newcommand{\vphi}{\varphi}
\newcommand{\veps}{\varepsilon}
\newcommand{\wb}[1]{\overline{#1}}
\newcommand{\id}{\mathrm{id}}%identity map
\newcommand{\espan}[1]{\mathrm{span}\left\{#1\right\}}
\newcommand{\diag}{\mathrm{diag}}
\newcommand{\cA}{\mathcal{A}}%algebras
\newcommand{\cB}{\mathcal{B}}
\newcommand{\cC}{\mathcal{C}}
\newcommand{\cH}{\mathcal{H}}
\newcommand{\cK}{\mathcal{K}}
\newcommand{\cL}{\mathcal{L}}
\newcommand{\cO}{\mathcal{O}}
\newcommand{\cQ}{\mathcal{Q}}
\newcommand{\cR}{\mathcal{R}}
\newcommand{\cS}{\mathcal{S}}
\newcommand{\cU}{\mathcal{U}}
\newcommand{\cV}{\mathcal{V}}
\newcommand{\cW}{\mathcal{W}}
\newcommand{\frf}{{\mathfrak f}}
\newcommand{\frs}{{\mathfrak s}}
\newcommand{\ZZ}{\mathbb{Z}}
\newcommand{\RR}{\mathbb{R}}
\newcommand{\CC}{\mathbb{C}}
\newcommand{\HH}{\mathbb{H}}
\newcommand{\FF}{\mathbb{F}}
\DeclareMathOperator{\charac}{\mathrm{char}}
\DeclareMathOperator{\AlgF}{\mathrm{Alg_{\FF}}}%category of comm. assoc. unital algebras over F
\DeclareMathOperator{\rad}{\mathrm{rad}}%nilradical
\DeclareMathOperator{\Hom}{\mathrm{Hom}}
\DeclareMathOperator{\Aut}{\mathsf{Aut}}%automorphism group
\DeclareMathOperator{\SStab}{\mathbf{Stab}}%stabilizer in group schemes
\DeclareMathOperator{\AAut}{\mathbf{Aut}}%automorphism group scheme
\DeclareMathOperator{\Der}{\mathrm{Der}}
\DeclareMathOperator{\alg}{\mathrm{alg}}
\DeclareMathOperator{\Fix}{\mathrm{Fix}}
\DeclareMathOperator{\Centr}{\mathrm{Cent}}
\DeclareMathOperator{\CCentr}{\mathbf{Cent}}%centralizer in group schemes
\DeclareMathOperator{\SSU}{\mathbf{SU}}
\DeclareMathOperator{\SSL}{\mathbf{SL}}
\newcommand{\bfT}{\mathbf{T}}
\newcommand{\bfG}{\mathbf{G}}
\newcommand{\bfU}{\mathbf{U}}
\newcommand{\bfP}{\mathbf{P}}
\newcommand{\bfL}{\mathbf{L}}
\newcommand{\bfB}{\mathbf{B}}
\newcommand{\bfH}{\mathbf{H}}
\newcommand{\bfK}{\mathbf{K}}
\newcommand{\bfN}{\mathbf{N}}
\newcommand{\ad}{\mathrm{ad}}
\newcommand{\Ad}{\mathrm{Ad}}
\newcommand{\frsl}{{\mathfrak{sl}}}
\newcommand{\frpsl}{{\mathfrak{psl}}}
\newcommand{\sfS}{\mathsf{S}}
\newcommand{\sfC}{\mathsf{C}}
\def\hregleta{\hrule height .5pt}
\def\hreglon{\hrule height1pt}
\def\vreglon{\vrule height 12pt width1pt depth 4pt}
\def\vregleta{\vrule width .5pt}
\def\hregletafill{\leaders\hregleta\hfill}
\newcommand{\subo}{_{\bar 0}}
\newcommand{\subuno}{_{\bar 1}}
\newcommand\boldmu{\boldsymbol{\mu}}
\renewcommand{\Hom}{\mathsf{Hom}}
\renewcommand{\id}{\mathsf{id}}
\newcommand{\cl}{\mathsf{cl}}
\newtheorem{theorem}{Theorem}
\newtheorem{proposition}[theorem]{Proposition}
\newtheorem{lemma}[theorem]{Lemma}
\newtheorem{corollary}[theorem]{Corollary}
\theoremstyle{definition}
\newtheorem{df}[theorem]{Definition}
\newtheorem{example}[theorem]{Example}
\newtheorem{remark}[theorem]{Remark}
\numberwithin{theorem}{section}
\numberwithin{equation}{section}
\newenvironment{romanenumerate}
 {\begin{enumerate}
 
 }{\end{enumerate}}
\begin{document}

\title[Order $3$ elements in $G_2$ and idempotents]{Order $3$ 
elements in $G_2$ and idempotents in symmetric composition algebras}

\author[A.~Elduque]{Alberto Elduque${}^\star$}
\address{Departamento de Matem\'{a}ticas
 e Instituto Universitario de Matem\'aticas y Aplicaciones,
 Universidad de Zaragoza, 50009 Zaragoza, Spain}
\email{elduque@unizar.es}
\thanks{${}^\star$ Supported by the Spanish Ministerio de Econom\'{\i}a y 
Competitividad---Fondo Europeo de Desarrollo Regional (FEDER) MTM2013-45588-C3-2-P
}%, 
%and by the Diputaci\'on General de Arag\'on---Fondo Social Europeo 
%(Grupo de Investigaci\'on de \'Algebra).}

%\author{A.E. to S.A.}

\subjclass[2010]{Primary 17A75; Secondary 20G15, 14L15, 17B25}

\keywords{Symmetric composition algebra; Okubo algebra; Automorphism group; Centralizer; Idempotent.}

\date{}

\begin{abstract}
Order three elements in the exceptional groups of type $G_2$ are classified up to conjugation over arbitrary fields. Their centralizers are computed, and the associated classification of idempotents in symmetric composition algebras is obtained. Idempotents have played a key role in the study and classification of these algebras.

Over an algebraically closed field, there are two conjugacy classes of order three elements in $G_2$ in characteristic $\neq 3$ and four of them in characteristic $3$. The centralizers in characteristic $3$ fail to be smooth for one of these classes.
\end{abstract}

\maketitle

%\setlength{\unitlength}{1mm}

%----------------------------------------------------------------------------------------------------------------------
%SECTION 1: INTRODUCTION
%----------------------------------------------------------------------------------------------------------------------

\section{Introduction}\label{se:intro}

Symmetric composition algebras constitute an important class of algebras strongly related to the triality phenomenon (see \cite[Chapter VIII]{KMRT}). They were classified in \cite{EM91,EM93} over fields of characteristic $\ne 3$ and in \cite{Eld97} in characteristic $3$.

Idempotents in these algebras have proved to be a key tool in this classification, and these idempotents, in dimension $8$, are related to order $3$ automorphisms of Cayley algebras, that is, to order $3$ rational elements in simple linear algebraic groups of type $G_2$: the groups of automorphisms of Cayley algebras.

The goal of this paper is to classify, up to conjugation, the order $3$ elements in the groups of automorphisms of Cayley algebras over arbitrary fields, and to deduce from here a complete description of the idempotents in symmetric composition algebras.

Over an algebraically closed field of characteristic $\neq 3$, it is easy to check that, up to conjugation, there are two different order $3$ elements, which correspond to the two nonisomorphic eight-dimensional symmetric composition algebras: the para-Cayley algebra and the Okubo algebra. Over arbitrary fields of characteristic $\neq 3$, still there are two types of order $3$ elements in the group of automorphisms of a Cayley algebra (Theorem \ref{th:char_not3}).

The situation is more interesting in characteristic $3$, where the existence of a unique `quaternionic idempotent' in the split Okubo algebra was of crucial importance in the determination of the affine group scheme of automorphisms of this algebra in \cite{CEKT}, and explains why the group of automorphisms is a subgroup of the exceptional algebraic group of type $G_2$. In characteristic $3$, once a torus is fixed on the linear algebraic group of type $G_2$ over an algebraically closed field, for any root $\alpha$ relative to this torus and any nonzero scalar $t$, the element ``$\exp(tx_{\alpha})$'' has order $3$. It turns out that there are four different types of order $3$ elements in the automorphism group of a split Cayley algebra over an arbitrary field of characteritic $3$ (Theorem \ref{th:char3}).

In Section \ref{se:composition_algebras}, the basic definitions and results on both unital composition algebras and on symmetric composition algebras will be recalled. The construction of the split Cayley algebra as the algebra of Zorn matrices will be reviewed. In Section \ref{se:Zorn_autos}, a specific Chevalley basis of the Lie algebra of type $G_2$ will be given, and some noteworthy subgroups of automorphisms will be considered.

Section \ref{se:paraCayley} will highlight some specific order $3$ automorphisms of Cayley algebras related to para-Cayley algebras. Idempotents in para-Cayley algebras and in symmetric composition algebras of dimension $\neq 8$ will be described too in this section.

Section \ref{se:char_not_3} is devoted to describe the conjugacy classes of order $3$ elements in the group of automorphisms of a Cayley algebra over a field of characteristic $\neq 3$, together with their centralizers, and this is used to describe the idempotents in eight-dimensional symmetric composition algebras over these fields. The split Okubo algebra is characterized as the only Okubo algebra with isotropic norm and idempotents (Theorem \ref{th:O_split_e}). The number of conjugacy classes of idempotents depends on the ground field.

The remaining sections will deal with the much more difficult case of  fields of characteristic $3$. Section \ref{se:char3_autos} will classify the conjugacy classes of order $3$ automorphisms of Cayley algebras. There are four different types of such automorphisms. Section \ref{se:centralizers_3} will be devoted to compute their centralizers. Some of them are not smooth. Finally, Section \ref{se:char3_idempotents} deals with idempotents of Okubo algebras over fields of characteristic $3$. In the split case, there is a unique \emph{quaternionic} idempotent, some conjugacy classes of \emph{quadratic} idempotents, the number of which depends on the ground field, and a unique conjugacy class of \emph{singular} idempotents (Theorem \ref{th:Idempotents_Okubo_3}). Over non perfect fields of characteristic $3$, there are non split Okubo algebras with idempotents, and all these idempotents are quadratic.

\smallskip

Throughout the paper, the functorial approach to algebraic groups (i.e.; algebraic affine group schemes) will be followed (see, e.g., \cite[Chapter VI]{KMRT}). Thus, an algebraic group is a representable functor from the category of unital, commutative, associative $\FF$-algebras $\AlgF$ to the category of groups, so that the (Hopf) algebra that represents it is finitely generated (as an algebra).

Any linear algebraic group $\bfG$ defined over a field $\FF$ in the classical sense (as in, e.g., \cite{Springer}) defines a smooth algebraic group which will be denoted too by $\bfG$. 

Also, for a finite dimensional algebra $(\cA,\cdot)$ over $\FF$, $\Aut(\cA,\cdot)$ will denote its automorphism group. This is the group of rational points of the algebraic group $\AAut(\cA,\cdot)$: $\Aut(\cA,\cdot)=\AAut(\cA,\cdot)(\FF)$. The Lie algebra of $\AAut(\cA,\cdot)$ is the Lie algebra of derivations $\Der(\cA,\cdot)$.

Note that, in general, $\AAut(\cA,\cdot)$ may fail to be smooth. This is what happens for the Okubo algebras over fields of characteristic $3$ (\cite[\S 10,11]{CEKT}).

%------------------------------------------
\bigskip

%--------------------------------------------
%  SECTION Composition algebras
%-------------------------------------------

\section{Composition algebras}\label{se:composition_algebras}

\begin{df}\label{df:composition}
A \emph{composition algebra} over a field $\FF$ is a triple $(\cC,\ast,n)$ such that
\begin{itemize}
\item $(\cC,\ast)$ is a nonassociative (i.e., not necessarily associative) algebra.
\item  $n:\cC\rightarrow\FF$ is a nonsingular multiplicative (i.e. $n(x\ast y)=n(x)n(y)$ for any $x,y\in\cC$) quadratic form.
\end{itemize}
The unital composition algebras are termed \emph{Hurwitz algebras}.
\end{df}

The quadratic form being nonsingular means that either $\{x\in\cC: n(x,\cC)=0\}$ is trivial, or it is a nonisotropic one-dimensional subspace. Here we will denote the polar form: $n(x,y)\bydef n(x+y)-n(x)-n(y)$, by the same symbol $n$.

The reader is referred to \cite[Chapter VIII]{KMRT} or to \cite[Chapter 2]{ZSSS} for the basic facts about composition algebras.

If the multiplication and the norm are clear from the context, we will simply refer to the algebra $\cC$.

Hurwitz algebras form a well-known class of composition algebras. Any element of a Hurwitz algebra $(\cC,\cdot,n)$ satisfies the `Cayley-Hamilton equation'
\[
x^{\cdot 2}-n(x,1)x+n(x)1=0.
\]
 Besides, $(\cC,\cdot,n)$ is endowed with an involution: the \emph{standard conjugation}, defined by
 \[
 \bar x\bydef n(x,1)1-x,
\]
which satisfies
\[
\bar{\bar x}=x,\quad \overline{x\cdot y}=\bar y\cdot\bar x,\quad x+\bar x=n(x,1)1,\quad x\cdot\bar x=\bar x\cdot x=n(x)1,
\]
for any $x,y\in\cC$.

The dimension of any Hurwitz algebra is finite and restricted to $1$, $2$, $4$ or $8$, and any Hurwitz algebra is either the ground field, a quadratic \'etale algebra, a quaternion algebra or a Cayley (or octonion) algebra (see \cite[(33.17)]{KMRT}).

\begin{example}\label{ex:Zorn}
The  \emph{Zorn matrix algebra}  is the compostion algebra $(\cC,\cdot,n)$ with 
\[
\cC=\left\{\begin{pmatrix} \alpha&u\\ v&\beta\end{pmatrix} : \alpha,\beta\in \FF, u,v\in\FF^3\right\},
\]
with 
\[
\begin{pmatrix} \alpha&u\\ v&\beta\end{pmatrix}\cdot\begin{pmatrix} \alpha'&u'\\ v'&\beta'\end{pmatrix}=
\begin{pmatrix} \alpha\alpha'+(u\mid v')&\alpha u'+\beta' u- v\times v'\\ 
\alpha' v+\beta v' + u\times u'&\beta\beta'+ (v\mid u')\end{pmatrix},
\]
and 
\[
n\left(\begin{pmatrix} \alpha&u\\ v&\beta\end{pmatrix}\right)=\alpha\beta -(u\mid v),
\]
where, for $u=(\mu_1,\mu_2,\mu_3)$ and $v=(\nu_1,\nu_2,\nu_3)\in\FF^3$, $(u\mid v)$ denotes the usual scalar product: $(u\mid v)=\sum_{i=1}^3\mu_i\nu_i$, and $u\times v$ the usual vector product: $u\times v=(\mu_2\nu_3-\mu_3\nu_2, \mu_3\nu_1-\mu_1\nu_3, \mu_1\nu_2-\mu_2\nu_1)$.

Let $\{a_1,a_2,a_3\}$ be the standard basis of $F^3$.
The elements
\[
 e_1 = \begin{pmatrix}
 1& 0\\[0.1ex] 0 &0 \end{pmatrix},\  f_1 = \begin{pmatrix}
 0& 0\\[0.1ex] 0 &1 \end{pmatrix},\
 u_i = \begin{pmatrix} 0 & -a_i\\[0.1ex] 0 &0 \end{pmatrix},\
  v_i = \begin{pmatrix} 0 & 0\\[0.1ex] a_i &0 \end{pmatrix},\ i=1,2,3,\
\]
form a hyperbolic basis, called
the \emph{canonical basis} of the Zorn algebra $(C,\cdot,n)$. The multiplication table in this basis is given in Table \ref{ta:good_basis}.
\begin{table}[!h]
\[
\vcenter{\offinterlineskip
\halign{\hfil$#$\enspace\hfil&#\vreglon
 &\hfil\enspace$#$\enspace\hfil
 &\hfil\enspace$#$\enspace\hfil&#\vregleta
 &\hfil\enspace$#$\enspace\hfil
 &\hfil\enspace$#$\enspace\hfil
 &\hfil\enspace$#$\enspace\hfil&#\vregleta
 &\hfil\enspace$#$\enspace\hfil
 &\hfil\enspace$#$\enspace\hfil
 &\hfil\enspace$#$\enspace\hfil&#\vreglon\cr
 &\omit\hfil\vrule width 1pt depth 4pt height 10pt
   &e_1&e_2&\omit&u_1&u_2&u_3&\omit&v_1&v_2&v_3&\cr
 \noalign{\hreglon}
 e_1&& e_1&0&&u_1&u_2&u_3&&0&0&0&\cr
 e_2&&0&e_2&&0&0&0&&v_1&v_2&v_3&\cr
 &\multispan{12}{\hregletafill}\cr
 u_1&&0&u_1&&0&v_3&-v_2&&-e_1&0&0&\cr
 u_2&&0&u_2&&-v_3&0&v_1&&0&-e_1&0&\cr
 u_3&&0&u_3&&v_2&-v_1&0&&0&0&-e_1&\cr
 &\multispan{12}{\hregletafill}\cr
 v_1&&v_1&0&&-e_2&0&0&&0&u_3&-u_2&\cr
 v_2&&v_2&0&&0&-e_2&0&&-u_3&0&u_1&\cr
 v_3&&v_3&0&&0&0&-e_2&&u_2&-u_1&0&\cr
 \noalign{\hreglon}}}
\]
\caption{{\vrule width 0pt height 15pt}
Multiplication table in a canonical basis of the Zorn matrix algebra.}
\label{ta:good_basis}
\end{table}
\end{example}

\begin{remark}\label{re:canonical_basis}
Given the Zorn matrix algebra and its Peirce decomposition
\begin{equation}\label{eq:Peirce}
\cC=\FF e_1\oplus\FF e_2\oplus\cU\oplus\cV,
\end{equation}
where $\cU=\espan{u_1,u_2,u_3}=e_1\cdot\cC\cdot e_2$, and $\cV=\espan{v_1,v_2,v_3}=e_2\cdot \cC\cdot e_1$, the trilinear map $\cU\times \cU\times \cU\rightarrow \FF$, $(x,y,z)\mapsto n(x,y\cdot z)$, is alternating and nonzero. Then, given any basis $\{\tilde u_1,\tilde u_2,\tilde u_3\}$ of $\cU$ with $n(\tilde u_1,\tilde u_2\cdot \tilde u_3)=1$, its dual basis relative to $n$ in $\cV$ is $\{\tilde v_1=\tilde u_2\cdot\tilde u_3,\tilde v_2=\tilde u_3\cdot\tilde u_1,\tilde v_3=\tilde u_1\cdot\tilde u_2\}$, and $\{e_1,e_2,\tilde u_1,\tilde u_2,\tilde u_3,\tilde v_1,\tilde v_2,\tilde v_3\}$ is another canonical basis, that is, it has the same multiplication table.
\end{remark}

The subalgebra $\FF e_1\oplus\FF e_2$ is isomorphic to the algebra $\FF\times\FF$, the subalgebra $\FF e_1\oplus\FF e_2\oplus\FF u_1\oplus\FF v_1$ is isomorphic to the algebra $M_2(\FF)$ of two by two matrices. The algebras $\FF\times\FF$, $M_2(\FF)$ and Zorn matrix algebra exhaust, up to isomorphism, the Hurwitz algebras with isotropic norm. Together with the ground field, these are called the \emph{split Hurwitz algebras}.

We will make use several times of the following helpful result. Throughout the paper, an \emph{idempotent} of an algebra $(\cA,\ast)$ is a \emph{nonzero} element $e\in\cA$ such that $e\ast e=e$.

\begin{lemma}\label{le:useful}
Let $(\cC,\cdot,n)$ be the Zorn matrix algebra over a field $\FF$. Then,
\begin{itemize}
\item Any idempotent of $(\cC,\cdot,n)$, different from the unity $1$, is conjugate to $e_1$. (That is, there is an automorphism $\varphi\in\Aut(\cC,\cdot,n)$ such that $\varphi(e)=e_1$.)
\item Any nonzero element $x\in\cC$ with $x^{\cdot 2}=0$ is conjugate to $u_1$.
\end{itemize}
\end{lemma}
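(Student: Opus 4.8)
The plan is to reduce each statement to the explicit Zorn model via the Cayley-Hamilton equation, and then to manufacture the required automorphism out of the canonical basis of Remark~\ref{re:canonical_basis}. First I would pin down the invariants. If $e\neq 1$ is an idempotent, then $e\notin\FF 1$ (a scalar idempotent is $0$ or $1$), so $e$ and $1$ are linearly independent; feeding $e^{\cdot2}=e$ into $e^{\cdot2}-n(e,1)e+n(e)1=0$ forces $n(e,1)=1$ and $n(e)=0$. Likewise, if $x\neq 0$ satisfies $x^{\cdot2}=0$ then $x\notin\FF 1$, and Cayley-Hamilton gives $n(x,1)=0=n(x)$. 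Since $n(e_1)=0$, $n(e_1,1)=1$ and $n(u_1)=0=n(u_1,1)$, the target elements carry exactly these invariants, as they must.

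For the idempotent statement, set $\bar e\bydef 1-e$. Then $\bar e$ is an idempotent with $e\cdot\bar e=\bar e\cdot e=0$, and $\{e,\bar e\}$ is a hyperbolic pair for $n$ (both isotropic, and $n(e,\bar e)=n(e,1)-2n(e)=1$). I would show that the Peirce components $\cU_e\bydef e\cdot\cC\cdot\bar e$ and $\cV_e\bydef\bar e\cdot\cC\cdot e$ reproduce the structure of $\cU,\cV$: namely $\cC=\FF e\oplus\cU_e\oplus\cV_e\oplus\FF\bar e$ with $\dim\cU_e=\dim\cV_e=3$, each totally isotropic and dually paired by $n$, with $\cU_e\cdot\cU_e\subseteq\cV_e$ and the triple product $(x,y,z)\mapsto n(x,y\cdot z)$ alternating and nonzero on $\cU_e$. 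Granting this, Remark~\ref{re:canonical_basis} applies verbatim with $e,\bar e$ in the roles of $e_1,e_2$: choosing a basis $\{\tilde u_1,\tilde u_2,\tilde u_3\}$ of $\cU_e$ with $n(\tilde u_1,\tilde u_2\cdot\tilde u_3)=1$ yields a canonical basis $\{e,\bar e,\tilde u_1,\tilde u_2,\tilde u_3,\tilde v_1,\tilde v_2,\tilde v_3\}$. As all canonical bases share the same multiplication table, the linear map sending $e_1\mapsto e$, $e_2\mapsto\bar e$, $u_i\mapsto\tilde u_i$, $v_i\mapsto\tilde v_i$ is an automorphism $\varphi\in\Aut(\cC,\cdot,n)$ with $\varphi(e_1)=e$, so $\varphi^{-1}(e)=e_1$.

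For the square-zero statement I would first exhibit the subgroup $\SL_3\leq\Aut(\cC,\cdot,n)$ acting by $\bigl(\begin{smallmatrix}\alpha&u\\ v&\beta\end{smallmatrix}\bigr)\mapsto\bigl(\begin{smallmatrix}\alpha&gu\\ (g^{-1})^{t}v&\beta\end{smallmatrix}\bigr)$ for $g\in\SL_3$; a short check using $(gu\mid (g^{-1})^{t}v)=(u\mid v)$ and $(gu)\times(gu')=(g^{-1})^{t}(u\times u')$ (valid since $\det g=1$) shows this preserves the product of Table~\ref{ta:good_basis}. It fixes $e_1,e_2$ and acts on $\cU$ as the standard representation, which is transitive on $\cU\setminus\{0\}$; hence every nonzero element of $\cU$ is conjugate to $u_1$. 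It then suffices to move $x$ into $\cU$: I would produce an idempotent $e$ with $e\cdot x=x$ and $x\cdot e=0$, i.e. $x\in\cU_e$, and apply the first part to send $e$ to $e_1$, carrying $x$ into $\cU$. Equivalently, one completes $x$ to a canonical basis with $\tilde u_1$ a scalar multiple of $x$ and argues as above.

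The main obstacle is the reduction step common to both parts: verifying that an arbitrary nontrivial idempotent $e$ induces a Peirce decomposition of exactly the shape of the one attached to $e_1$ (the $3+3$ splitting into totally isotropic dual summands and the nonvanishing of the alternating triple product), and, equivalently, the existence for a square-zero $x$ of an idempotent $e$ with $x\in\cU_e$. This is precisely where the geometry of the split Cayley algebra, i.e. the transitivity of $\Aut(\cC,\cdot,n)$, enters; an alternative here is to invoke conjugacy of the two-dimensional split \'etale subalgebra $\FF 1\oplus\FF e$ with $\FF e_1\oplus\FF e_2$. Once this input is in place, Remark~\ref{re:canonical_basis} converts the combinatorial data into the desired automorphism with no further effort.
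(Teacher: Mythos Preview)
Your overall strategy—reconstruct a canonical basis around the given element and then invoke Remark~\ref{re:canonical_basis}—is sound in spirit, and your Cayley–Hamilton bookkeeping is correct. But you yourself flag the real gap and do not close it. For the idempotent part, your first suggestion (use ``transitivity of $\Aut(\cC,\cdot,n)$'') is circular, since that is what is being proved; your second suggestion (conjugacy of split quadratic \'etale subalgebras) is exactly what the paper does, and once you invoke it the Peirce-decomposition verification becomes superfluous. The paper's argument is simply: $\FF e\oplus\FF(1-e)$ is a split quadratic \'etale subalgebra, the assignment $e_1\mapsto e$, $e_2\mapsto 1-e$ is an isomorphism of composition subalgebras, and the Cayley–Dickson doubling process extends it to an automorphism of $(\cC,\cdot,n)$.

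For the square-zero part your $\SSL_3$ reduction is fine, but the step ``produce an idempotent $e$ with $x\in\cU_e$'' is the whole content, and you give no construction. The paper supplies one: pick $y$ with $n(x,y)=-1$, $n(1,y)=0$, and (after adding a suitable scalar multiple of $x$) $n(y)=0$; then $\bar x=-x$, $\bar y=-y$, and $x\cdot y + y\cdot x = -n(x,y)1 = 1$, so $f\bydef x\cdot y$ and $1-f=y\cdot x$ are complementary idempotents with $f\cdot x=x$ and $x\cdot f=0$. The map $e_1\mapsto f$, $e_2\mapsto 1-f$, $u_1\mapsto x$, $v_1\mapsto y$ is an isomorphism onto a quaternion subalgebra, and Cayley–Dickson again extends it. This both manufactures your missing idempotent and finishes the proof without the $\SSL_3$ detour.

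So the contrast: you aim to rebuild a full canonical basis around the element, while the paper builds only the smallest composition subalgebra containing the element in the right slot ($\FF\times\FF$ for part one, $M_2(\FF)$ for part two) and appeals once to the standard extension theorem for isomorphisms of Hurwitz subalgebras. Your route can be completed, but the pieces you leave unwritten are precisely the substance of the paper's shorter argument.
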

\begin{proof}
The elements $e$ and $1-e$ are orthogonal idempotents, so there is an isomorphism $\varphi:\FF e_1\oplus\FF e_2\rightarrow \FF 1\oplus\FF e$ such that $\varphi(e_1)=e$. By means of the Cayley-Dickson doubling process (as, for instance, in \cite[Corollary 1.7.3]{SV} or \cite[proof of Corollary 4.7]{EK13}), $\varphi$ can be extended to an automorphism of $(\cC,\cdot,n)$.

Now, if $x^{\cdot 2}=0$, let $y\in\cC$ with $n(x,y)=-1$ and $n(1,y)=0$.  Changing $y$ by $y-\frac{1}{n(y)}x$ we may assume $n(y)=0$ too. Then $\bar x=-x$, $\bar y=-y$ and $1=-n(x,y)1=x\cdot y+y\cdot x$. It follows that $f=x\cdot y$ and $1-f=y\cdot x$ are orthogonal idempotents. The assignment $e_1\mapsto f$, $e_2\mapsto 1-f$, $u_1\mapsto x$, $v_1\mapsto y$, gives an isomorphism $\FF e_1\oplus\FF e_2\oplus\FF u_1\oplus\FF v_1\, \bigl(\cong M_2(\FF)\bigr)$ onto the subalgebra generated by $x$ and $y$ that extends to an automorphism of $(\cC,\cdot,n)$.
\end{proof}

\medskip

\begin{df}\label{df:symmetric}
A composition algebra $(\cS,*,n)$ is said to be \emph{symmetric} if the polar form of the norm is associative:
\begin{equation}\label{eq:n_invariant}
n(x*y,z)=n(x,y*z)
\end{equation}
for any $x,y,z\in\cS$. This is equivalent to the condition
\begin{equation}\label{eq:xyx}
(x*y)*x=n(x)=x*(y*x)
\end{equation}
for any $x,y\in\cC$.
\end{df}

If $e$ is an idempotent of a symmetric composition algebra $(\cS,*,n)$, then the map 
\begin{equation}\label{eq:e_tau}
\tau: x\mapsto e*(e*x)=n(e,x)e-x*e
\end{equation} 
is an automorphism of $(\cS,*,n)$ such that $\tau^3=\id$ (see \cite[Theorem 2.5]{EP96}). Moreover, the new multiplication on $\cS$ given by 
\begin{equation}\label{eq:exye}
x\cdot y=(e*x)*(y*e)
\end{equation}
makes $(\cS,\cdot,n)$ a Hurwitz algebra with unity $e$, and the original multiplication is recovered as
\begin{equation}\label{eq:Petersson}
x*y=\tau(\bar x)\cdot\tau^2(\bar y),
\end{equation}
for any $x,y\in\cS$. Moreover, $\tau$ is also an automorphism of $(\cS,\cdot,n)$.

\begin{proposition}\label{pr:fix_cent}
Let $e$ be an idempotent of a symmetric composition algebra $(\cS,*,n)$, let $\tau$ be the automorphism defined in \eqref{eq:e_tau}, and let $(\cS,\cdot,n)$ be the Hurwitz algebra with multiplication given in \eqref{eq:exye}. Then,
\begin{romanenumerate} 
\item the subalgebra of fixed points by $\tau$:
\[
\Fix(\tau)\bydef \{x\in\cS: \tau(x)=x\}
\]
coincides with the centralizer of $e$:
\[
\Centr_{(\cS,*,n)}(e)\bydef \{x\in\cS: e*x=x*e\}.
\]
\item The centralizer in the group scheme of automorphisms of $(\cS,\cdot,n)$ of $\tau$ coincides with the stabilizer of $e$ in the group scheme of automorphisms of $(\cS,*,n)$:
\[
\CCentr_{\AAut(\cS,\cdot,n)}(\tau)=\SStab_{\AAut(\cS,*,n)}(e).
\]
\end{romanenumerate}
\end{proposition}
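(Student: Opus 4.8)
The plan is to prove the two parts separately, in each case reducing everything to the structural formulas \eqref{eq:e_tau}, \eqref{eq:exye} and \eqref{eq:Petersson} that relate the pair $(*,e)$ to the pair $(\cdot,\tau)$. One preliminary observation drives both parts. Setting $x=y=e$ in \eqref{eq:xyx} gives $e*(e*e)=n(e)e$; since $e*e=e$ and $e\neq0$, this forces $n(e)=1$. Writing $L_e\colon x\mapsto e*x$ and $R_e\colon x\mapsto x*e$, specialising \eqref{eq:xyx} at $x=e$ then yields $e*(y*e)=y$ and $(e*y)*e=y$ for all $y$, i.e.\ $L_eR_e=R_eL_e=\id$, so that $R_e=L_e^{-1}$. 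Since $\tau=L_e^2$ by \eqref{eq:e_tau}, part (i) is then immediate from the chain of equivalences
\[
\tau(x)=x \iff L_e^2(x)=x \iff L_e(x)=L_e^{-1}(x)=R_e(x) \iff e*x=x*e,
\]
the middle step being obtained by applying $L_e$ (respectively $L_e^{-1}$) to both sides. This identifies $\Fix(\tau)$ with $\Centr_{(\cS,*,n)}(e)$.

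For (ii) I would argue functorially: for every unital commutative associative $\FF$-algebra $R$ I would show the two subgroups of automorphisms of $\cS_R\bydef\cS\otimes_\FF R$ coincide, with $*,\cdot,n,\tau$ and $e$ all extended $R$-linearly (the formulas above persist, and $e$ remains the unit of $(\cS_R,\cdot,n)$). If $\phi$ is an $R$-automorphism of $(\cS_R,*,n)$ fixing $e$, then applying $\phi$ to \eqref{eq:e_tau} gives $\phi\tau=\tau\phi$, and applying it to \eqref{eq:exye} shows $\phi$ respects $\cdot$ as well; as $\phi$ already preserves $n$, it lies in $\CCentr_{\AAut(\cS,\cdot,n)}(\tau)(R)$. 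Conversely, if $\phi$ is an $R$-automorphism of $(\cS_R,\cdot,n)$ commuting with $\tau$, then $\phi$ fixes the unit $e$ of $(\cS_R,\cdot)$, and since it preserves $n$ and fixes $e$ it commutes with the standard conjugation $\bar x=n(x,e)e-x$; feeding this, together with $\phi\tau=\tau\phi$ (hence $\phi\tau^2=\tau^2\phi$), into the Petersson formula \eqref{eq:Petersson} gives $\phi(x*y)=\phi(x)*\phi(y)$, so that $\phi\in\SStab_{\AAut(\cS,*,n)}(e)(R)$. As these identifications are natural in $R$, the two affine group schemes agree.

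I expect no genuine difficulty in the computations themselves; the points to watch are that the whole of (ii) must be carried out over an arbitrary base $R$ rather than merely at $\FF$-points, this naturality being exactly what upgrades the pointwise set equality to an identity of affine group schemes, and that the passage between $*$- and $\cdot$-automorphisms hinges on the conjugation $\bar{\cdot}$ of $(\cS,\cdot,n)$ being expressible through $n$ and $e$ alone, so that every isometry of $n$ fixing $e$ automatically commutes with it. That single observation is what lets \eqref{eq:Petersson} transport automorphisms in both directions.
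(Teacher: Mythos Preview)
Your proof is correct and follows essentially the same approach as the paper's. Part (i) is the same computation phrased in operator language (the paper applies $R_e$ directly rather than first isolating $R_e=L_e^{-1}$), and part (ii) is argued exactly as in the paper, with your version spelling out the functorial passage over arbitrary $R$ and the reason $\phi$ commutes with the standard conjugation.
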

\begin{proof}
For $x\in\cS$, $\tau(x)=x$ if and only if $e*(e*x)=x$, if and only if $(e*(e*x))*e=x*e$, if and only if (because of \eqref{eq:xyx}) $e*x=x*e$. This proves the first part.

If $\varphi\in\Aut(\cS,*,n)$ fixes $e$: $\varphi(e)=e$, then clearly $\varphi$ is an automorphism of $\Aut(\cS,*,n)$ that commutes with $\tau$. Conversely, if $\varphi\in\Aut(\cS,\cdot,n)$ and $\varphi\tau=\tau\varphi$, then by \eqref{eq:Petersson}, $\varphi\in\Aut(\cS,*,n)$, and $\varphi(e)=e$ because $e$ is the unity of $(\cS,\cdot,n)$. Besides, all of this is functorial, so it is valid at the level of group schemes.
\end{proof}

\begin{df}\label{df:Petersson}
Given a Hurwitz algebra $(\cC,\cdot,n)$ over a field $\FF$ and an automorphism $\tau\in\Aut(\cC,\cdot,n)$ with $\tau^3=\id$, the new algebra defined on $\cC$ by means of \eqref{eq:Petersson}, and with the same norm, is called a \emph{Petersson algebra}, and denoted by $\cC_\tau$. 
\end{df}
This multiplication appeared for the first time in \cite{Petersson}. Any Petersson algebra is a symmetric composition algebra. If $\tau$ is the identity automorphism: $\tau=\id$, the Petersson algebra $\cC_{\id}$ is called the \emph{para-Hurwitz algebra} associated to the Hurwitz algebra $(\cC,\cdot,n)$. We will talk about para-quadratic algebras in dimension $2$, para-quaternion algebras in dimension $4$, and para-Cayley (or para-octonion) algebras in dimension $8$.

Usually the multiplication in a para-Hurwitz algebra will be denoted by $\bullet$: $x\bullet y=\bar x\cdot\bar y$. The unity $1$ of $\cC$ becomes an idempotent of $\cC_{\id}$ that satisfies $1\bullet x=x\bullet 1=\bar x\,\Bigl(=n(x,1)1-x\Bigr)$. Idempotents with this property are called \emph{para-units}. Any para-unit lies in the commutative center $K(\cC,\bullet)\bydef\{x\in\cC: x\bullet y=y\bullet x\ \forall y\in\cC\}$, which is the whole $\cC$ if $\dim_\FF\cC=1$ or $2$, and equals $\FF 1$ otherwise. Hence there is a unique para-unit if the dimension is $4$ or $8$. In particular, this implies that the group scheme of automorphisms $\AAut(\cC,\cdot,n)$ and $\AAut(\cC,\bullet,n)$ coincide and that any form of a para-Hurwitz algebra (i.e., any algebra that becomes isomorphic to a para-Hurwitz algebra after an extension of scalars)  is itself para-Hurwitz (if the dimension is $4$ or $8$).

\smallskip

There is a natural order $3$ automorphism of the Zorn matrix algebra:
\begin{equation}\label{eq:tau_st}
\tau_{st}\left(\begin{pmatrix} \alpha&(\mu_1,\mu_2,\mu_3)\\ (\nu_1,\nu_2,\nu_3)&\beta\end{pmatrix}\right) = \begin{pmatrix} \alpha&(\mu_3,\mu_1,\mu_2)\\ (\nu_3,\nu_1,\nu_2)&\beta\end{pmatrix}.
\end{equation}

\begin{df}[\cite{EP96}]\label{df:Okubo}
Let $(\cC,\cdot,n)$ be the Zorn matrix algebra, and let $\tau_{st}$ be its order $3$ automorphism in \eqref{eq:tau_st}. The Petersson algebra $\cC_{\tau_{st}}$ is called the \emph{split Okubo algebra}.

The forms of the split Okubo algebra are called \emph{Okubo algebras}.
\end{df}

This is not the original definition of these algebras given by Okubo in \cite{O78}.

\begin{theorem}[\cite{OkuboOsborn1,OkuboOsborn2,EP96}]\label{th:paraH_or_Okubo}
Any symmetric composition algebra is either a form of a para-Hurwitz algebra (hence a para-Hurwitz algebra if the dimension is $\neq 2$), or an Okubo algebra.
\end{theorem}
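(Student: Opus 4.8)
The plan is to reduce everything to the algebraically closed case and then run the idempotent-to-Petersson machinery assembled above. First I would recall that the multiplicativity and nonsingularity of $n$ force $\dim_\FF\cS\in\{1,2,4,8\}$ (the classical dimension restriction for composition algebras, proved exactly as for Hurwitz algebras). Since being a form of a para-Hurwitz algebra and being an Okubo algebra are both defined by base change to $\Falg$, and since the split Hurwitz algebra of each admissible dimension is unique over $\Falg$, it suffices to prove the statement over an algebraically closed field $\KK$: that every symmetric composition algebra over $\KK$ is a para-Hurwitz algebra or the split Okubo algebra. The general case then follows, using the Remark preceding Definition \ref{df:Petersson} (a form of a para-Hurwitz algebra of dimension $\neq 2$ is again para-Hurwitz) and the very definition of an Okubo algebra as a form of the split one.

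The decisive input over $\KK$ is the existence of an idempotent, which I would establish first. Choosing $a\in\cS$ with $n(a)=1$ (possible over $\KK$ after scaling), identity \eqref{eq:xyx} shows that the left and right multiplications satisfy $L_aR_a=R_aL_a=\id$ and lie in $\Ort(n)$, and from the behaviour of such an $a$ under these orthogonal maps one extracts a nonzero $e$ with $e*e=e$ (this is the existence result in \cite{EP96,OkuboOsborn1}, valid in every characteristic over $\KK$). Once an idempotent $e$ is in hand, the discussion culminating in \eqref{eq:exye}--\eqref{eq:Petersson} and Definition \ref{df:Petersson} applies verbatim: the product $x\cdot y=(e*x)*(y*e)$ turns $\cS$ into a Hurwitz algebra $\cC$ with unit $e$, the map $\tau$ of \eqref{eq:e_tau} is an automorphism with $\tau^3=\id$, and $\cS=\cC_\tau$ is a Petersson algebra, with $\cC$ the unique split Hurwitz algebra of its dimension. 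I would also record the elementary but crucial fact that conjugate automorphisms yield isomorphic Petersson algebras: if $\psi\in\Aut(\cC,\cdot,n)$ then $\psi\colon\cC_\tau\to\cC_{\psi\tau\psi^{-1}}$ is an isomorphism, because $\psi$ commutes with the standard conjugation and with the recipe \eqref{eq:Petersson}.

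It then remains to classify, up to conjugacy, the automorphisms $\tau$ with $\tau^3=\id$ of each split Hurwitz algebra and to identify the corresponding $\cC_\tau$. In dimensions $1$ and $2$ the automorphism group has no element of order $3$, so $\tau=\id$ and $\cS$ is para-Hurwitz. In dimension $4$, $\Aut(\cC,\cdot,n)=\PGL_2(\KK)$, and one checks that every order-$3$ class is already realized by an idempotent of the para-quaternion algebra other than its para-unit, so that the associated Petersson algebra is again para-quaternion. The essential case is dimension $8$, where $\Aut(\cC,\cdot,n)=G_2$: here I would classify the order-$3$ elements of $G_2$ over $\KK$ and determine, for each, whether $\cC_\tau$ admits a para-unit. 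The class of $\tau_{st}$ from \eqref{eq:tau_st} gives the split Okubo algebra by Definition \ref{df:Okubo}, while the remaining nontrivial class, like the identity, arises from an idempotent of the para-Cayley algebra via \eqref{eq:e_tau} and hence gives para-Cayley; these exhaust the possibilities, proving the claim over $\KK$.

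The two genuinely substantive points are the existence of idempotents over $\KK$ and the separation, in dimension $8$, of the Okubo algebra from the para-Cayley algebra. The former is delicate in characteristic $3$, where $\AAut$ may be non-smooth; I would handle it purely at the level of rational points over the algebraically closed $\KK$, where the split algebras do carry idempotents. The latter is the heart of the matter and is precisely the order-$3$ analysis in $G_2$ developed in the body of the paper (Theorems \ref{th:char_not3} and \ref{th:char3}): one must show that $\cC_{\tau_{st}}$ has no para-unit---equivalently, that the split Okubo algebra is not para-Hurwitz---so that it is a genuinely new algebra, and that every other order-$3$ class merely re-presents the para-Cayley algebra through a non-para-unit idempotent. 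With these two ingredients the case analysis closes and the theorem follows.
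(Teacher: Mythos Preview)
Your strategy---reduce to an algebraically closed field, produce an idempotent, pass to the Petersson description $\cS=\cC_\tau$, and then classify $\tau$ up to conjugacy---is sound and matches the paper's sketch (which is the argument of \cite{EP96}). The one real difference is economy: \cite{EP96} does \emph{not} classify all conjugacy classes of $\tau$ with $\tau^3=\id$ and then identify each $\cC_\tau$; instead it shows that over an algebraically closed field one can \emph{choose} the idempotent so that the associated $\tau$ is already $\id$ or $\tau_{st}$. Your route works too, but it buys the conclusion at the price of the full order-$3$ classification in $G_2$, which in characteristic $3$ is genuinely more involved (four classes, not two). That brings up the one slip in your write-up: the sentence ``the remaining nontrivial class \ldots\ gives para-Cayley; these exhaust the possibilities'' is only correct when $\charac\FF\neq 3$. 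In characteristic $3$, Theorem~\ref{th:char3} gives four classes, and three of them (items (1), (2), (4)) produce the split Okubo algebra, while only item~(3) yields para-Cayley; your summary paragraph should say this. Finally, be aware of the forward dependence: you invoke Theorems~\ref{th:char_not3} and~\ref{th:char3} to prove Theorem~\ref{th:paraH_or_Okubo}, which sits earlier in the paper. This is not circular (those theorems rely directly on \cite{EP96}, not on Theorem~\ref{th:paraH_or_Okubo}), but it does mean your proof is really a reorganization of the same \cite{EP96} ingredients rather than an independent argument.
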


The proof in \cite{EP96} works as follows. Let $(\cS,*,n)$ be a symmetric composition algebra. Then either it contains an idempotent or it contains an idempotent after a cubic field extension of degree $3$ (\cite[((34.10)]{KMRT}). Assuming the existence of an idempotent, the arguments above show that the symmetric composition algebra is a Petersson algebra. Then in \cite{EP96} it is shown how to find, assuming the field is algebraically closed, an idempotent such that either the automorphism $\tau$ in \eqref{eq:e_tau} is the identity (i.e., $e$ is a para-unit) or the dimension is $8$ and $\tau$ is, up to conjugation, the automorphism $\tau_{st}$ in \eqref{eq:tau_st}.

\bigskip

%------------------------------------------
%  SECTION:  ZORN matrix and automorphisms
%-----------------------------------------

\section{Zorn matrix algebra and automorphisms}\label{se:Zorn_autos}

Let us consider the Zorn matrix algebra $(\cC,\cdot,n)$, as defined in Example \ref{ex:Zorn}. The stabilizer in $\AAut(\cC,\cdot,n)$ of the orthogonal idempotents $e_1$ and $e_2$:
\begin{equation}\label{eq:Stabilizer_eis}
\{\varphi\in\AAut(\cC,\cdot,n) : \varphi(e_i)=e_1,\ i=1,2\}
\end{equation}
is isomorphic to the special linear group $\SSL(\cU)\simeq\SSL_3$ (see \cite{Jac58} for the rational points, but the arguments are valid in general), where $\cU$ is the Peirce component in \eqref{eq:Peirce}, with the action of $f\in\SSL(\cU)$ given by
\[
f\cdot\begin{pmatrix}\alpha&u\\ v&\beta\end{pmatrix} = 
\begin{pmatrix} \alpha&f(u)\\ (f^t)^{-1}(v)&\beta\end{pmatrix},
\]
where $f^t\in\SSL(\cV)$ is the adjoint of $f$ relative to the scalar product $(.\mid.)$.
The torus $\bfT$ consisting of the diagonal matrices in $\SSL(\cU)$, relative to the basis $\{u_1,u_2,u_3\}$ in Table \ref{ta:good_basis}, is a split maximal torus of $\AAut(\cC,\cdot,n)$.

In the same vein, $\{d\in\Der(\cC,\cdot,n): d(e_i)=0,\ i=1,2\}$ is a Lie subalgebra of $\Der(\cC,\cdot,n)$ isomorphic to $\frsl(\cU)\simeq\frsl_3(\FF)$, and its diagonal subalgebra is a Cartan subalgebra of $\Der(\cC,\cdot,n)$.

The weights of the action of $\bfT$ on $\cC$ are $\{0,\pm\veps_i:1\leq i\leq 3\}$ (additive notation), where for $f\in \bfT$, $f=\diag(\alpha_1,\alpha_2,\alpha_3)$ (with $\alpha_1\alpha_2\alpha_3=1$), $\veps_i(f)=\alpha_i$. Hence $\veps_1+\veps_2+\veps_3=0$.

The root system in $\Der(\cC,\cdot,n)$ is 
\[
\Phi=\{\pm\veps_i,\,\veps_i-\veps_j:1\leq i\neq j\leq 3\}
\] 
(the root system of type $G_2$):
\[
\tik{0.8}{%
    \foreach\ang in {60,120,...,360}{
     \draw[->] (0,0) -- (\ang:2cm);
    }
    \foreach\ang in {30,90,...,330}{
     \draw[->] (0,0) -- (\ang:3.464cm);
    }
    \node[anchor=south] at (2,0) {$\veps_1$}; %\node[anchor=west,scale=0.6]
    \node[anchor=east] at (120:2cm) {$\veps_2$};
    \node[anchor=east] at (240:2cm) {$\veps_3$};
    \node[anchor=east] at (150:3.4cm) {$\veps_2-\veps_1$};
}
\]
Then $\Delta=\{\veps_1,\veps_2-\veps_1\}$ is a set of simple roots. The associated set of positive roots is $\Phi^+=\{\veps_1,\veps_2,-\veps_3,\veps_2-\veps_1,\veps_2-\veps_3,\veps_1-\veps_3\}$. 

With $\FF=\CC$ (the field of complex numbers), a concrete Chevalley basis is computed in \cite[4.4]{EK13}:
\[
\begin{split}
x_{\veps_i-\veps_j}&=[L_{u_i},R_{v_j}]=E_{ij}\in\frsl_3(\FF)\subseteq \Der(\cC,\cdot,n),\ 1\leq i\neq j\leq 3,\\
 x_{\veps_i}&=d_{e_1,u_i},\quad x_{-\veps_i}=-d_{e_2,v_i},\\
 h_1&=2E_{11}-E_{22}-E_{33},\quad h_2=E_{22}-E_{11},
\end{split}
\]
where $E_{ij}$ denotes the $3\times 3$ matrix with $1$ in the $(i,j)$-position and $0$'s elsewhere, $L_x$ (respectively $R_x$) denotes the left (resp. right) multiplication by $x$, and $d_{x,y}=[L_x,L_y]+[L_x,R_y]+[R_x,R_y]=\ad_{[x,y]^{\cdot}}+3[L_x,R_y]$ (with $\ad_x(y)=[x,y]^\cdot=x\cdot y-y\cdot x$ for any $x,y\in\cC$).

The $\ZZ$-span of the canonical basis is an admissible lattice, that is, it is invariant under the action of $\frac{x_\alpha^n}{n!}$ for each root $\alpha$. Actually, for each long root $\alpha=\veps_i-\veps_j$, $x_\alpha^2=0$, and for each short root $\alpha=\veps_i$ (resp. $-\veps_i$), $x_{\alpha}^3=0$, and $x_{\alpha}^2$ takes $v_i$ to $2u_i$ (resp. $u_i$ to $2v_i$), and kills all the other elements in the canonical basis. Hence, for an indeterminate $T$, $\exp(Tx_\alpha)$ is an automorphism of $(\cC_{\ZZ[T]},\cdot,n)$, where $\cC_\ZZ$ is the $\ZZ$-span of the canonical basis, and $\cC_{\ZZ[T]}=\cC_\ZZ\otimes_\ZZ\ZZ[T]$. 

As usual, specializing $T$ to elements in an arbitrary  algebra $R\in\AlgF$, it makes sense to consider the induced automorphisms, denoted by $\exp(tx_\alpha)$, $t\in R$, thus obtaining group homomorphisms $\bfG_a\rightarrow \AAut(\cC,\cdot,n)$ for each root $\alpha$, where $\bfG_a$ denotes the additive group ($\bfG(R)$ is the additive group of $R$ for any $R\in\AlgF$). Recall \cite[Lemma 7.3.3]{Springer} that for each connected, reductive, algebraic group $\bfG$ with a maximal split torus $\bfT$, and for each root $\alpha$ relative to this torus, there is an essentially unique homomorphism of algebraic groups $u_\alpha:\bfG_a\rightarrow \bfG$, satisfying some natural restrictions. Here $u_\alpha$ is the map $t\mapsto \exp(tx_\alpha)$ above. If $\bfU_\alpha$ denotes the image of $u_\alpha$, then $\bfG$ is generated by $\bfT$ and the $\bfU_\alpha$'s. (see \cite[8.1.1]{Springer}). Actually, over an algebraically closed field, the group of rational points of the $\bfU_\alpha$'s generate the group of rational points of $\bfG$ as an abstract group.

If the characteristic of $\FF$ is $3$, all these automorphisms $\exp(tx_\alpha)$ (for $t\ne 0$) have order $3$ and will play an important role in Theorem \ref{th:char3}.

Let $\bfB$ be the standard Borel subgroup associated to our set $\Delta$ of simple roots. Then $\bfB$ has dimension $8$ and it is generated by $\bfT$ and the $\bfU_{\alpha}$'s, with $\alpha\in\Phi^+$, and its unipotent radical $\bfU$ is generated by the $\bfU_{\alpha}$'s ($\alpha\in\Phi^+$). Also, for any simple root $\gamma\in\Delta$, we will use the corresponding parabolic subgroup $\bfP_{\{\gamma\}}$. Recall (see \cite[\S 8.4]{Springer}) that $\bfP_{\{\gamma\}}$ is the semidirect product of the Levi subgroup $\bfL_{\{\gamma\}}$, generated by $\bfT$ and $\bfU_{\pm\gamma}$, and its unipotent radical $\textup{R}_u(\bfP_{\{\gamma\}})$, generated by the $\bfU_{\alpha}$'s, with $\alpha\in\Phi^+\setminus\{\gamma\}$. The derived subgroup $[\bfP_{\{\gamma\}},\bfP_{\{\gamma\}}]=[\bfL_{\{\gamma\}},\bfL_{\{\gamma\}}]\ltimes \textup{R}_u(\bfP_{\{\gamma\}})$ is eight-dimensional, with $[\bfL_{\{\gamma\}},\bfL_{\{\gamma\}}]$ being isomorphic to $\SSL_2$ (see Remark \ref{re:Levi}).

\bigskip

%-------------------------------------------
%  SECTION: para Cayley
%-------------------------------------------

\section{Para-Cayley algebras as Petersson algebras}\label{se:paraCayley}

The order $3$ automorphisms of a Cayley algebra whose associated Petersson algebra is para-Cayley are of a very specific nature. These automorphisms were known to Okubo\footnote{Private communication (June 2013)}.

\begin{lemma}\label{le:tau_w}
Let $w$ be an element of a Cayley algebra $(\cC,\cdot,n)$ such that $w^{\cdot 3}=1$. Then the map $x\mapsto w\cdot x\cdot w^{\cdot 2}$ is an automorphism of $(\cC,\cdot,n)$. It is the identity if $w\in\FF 1$, and its order is $3$ otherwise.
\end{lemma}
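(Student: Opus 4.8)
The plan is to show that the linear map $\phi\colon x\mapsto w\cdot x\cdot w^{\cdot 2}$ is a bijective, norm-preserving algebra homomorphism fixing $1$, and then to analyze its order. First I would record the elementary facts. Since $w^{\cdot 3}=1$, the element $w$ is invertible with inverse $w^{\cdot 2}$, and by the Cayley--Hamilton equation $w^{\cdot 2}=n(w,1)w-n(w)1\in\FF 1\oplus\FF w$. As the associator is alternating on the alternative algebra $\cC$ and vanishes whenever two of its arguments are powers of $w$, every product of $x$ with two powers of $w$ reassociates freely; in particular $w\cdot x\cdot w^{\cdot 2}$ is unambiguous, $\phi(1)=w\cdot w^{\cdot 2}=1$, and $\phi$ is bijective with inverse $x\mapsto w^{\cdot 2}\cdot x\cdot w$. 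Multiplicativity of $n$ together with $n(w)^3=n(w^{\cdot 3})=1$ gives $n(\phi(x))=n(w)\,n(x)\,n(w)^2=n(x)$, so $\phi$ preserves the norm.

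The heart of the matter is the identity $\phi(x\cdot y)=\phi(x)\cdot\phi(y)$; this is where the nonassociativity bites, since in a nonassociative algebra a naive conjugation need not be multiplicative. I would prove it in two Moufang steps, using throughout the flexible and alternative laws together with $w^{\cdot3}=1$. Writing $\phi(x)=w\cdot(x\cdot w^{\cdot2})$ and $\phi(y)=(w\cdot y\cdot w)\cdot w$ (the latter because $((w\cdot y)\cdot w)\cdot w=(w\cdot y)\cdot w^{\cdot2}$ by the right alternative law), the middle Moufang identity $(a\cdot b)\cdot(c\cdot a)=a\cdot(b\cdot c)\cdot a$ with $a=w$ yields
\[
\phi(x)\cdot\phi(y)=w\cdot\bigl((x\cdot w^{\cdot2})\cdot(w\cdot y\cdot w)\bigr)\cdot w .
\]
Now the inner factor $w\cdot y\cdot w$ has the symmetric shape $a\cdot b\cdot a$, so the right Moufang identity $c\cdot(a\cdot b\cdot a)=((c\cdot a)\cdot b)\cdot a$ with $c=x\cdot w^{\cdot2}$, $a=w$, $b=y$ collapses it to $\bigl(((x\cdot w^{\cdot 2})\cdot w)\cdot y\bigr)\cdot w=(x\cdot y)\cdot w$, using $(x\cdot w^{\cdot2})\cdot w=x\cdot w^{\cdot3}=x$. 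Substituting back and applying flexibility and the right alternative law once more gives $\phi(x)\cdot\phi(y)=w\cdot\bigl((x\cdot y)\cdot w\bigr)\cdot w=w\cdot(x\cdot y)\cdot w^{\cdot2}=\phi(x\cdot y)$, as desired.

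For the final assertions, if $w\in\FF 1$, say $w=\lambda 1$ with $\lambda^3=1$, then $\phi(x)=\lambda^3 x=x$, so $\phi=\id$. In general $\phi^{\,3}=\id$: by Artin's theorem the subalgebra generated by $w$ and any single $x$ is associative, and on it $\phi$ is ordinary conjugation by the invertible element $w$ with $w^{\cdot3}=1$, whence $\phi^{\,3}(x)=x$. Finally, $\phi=\id$ forces $w\cdot x=x\cdot w$ for all $x$ (multiply $w\cdot x\cdot w^{\cdot2}=x$ on the right by $w$ and reassociate the powers of $w$), i.e. $w$ lies in the commutant of $\cC$, which for a Cayley algebra is $\FF 1$. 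Hence if $w\notin\FF 1$ then $\phi\neq\id$, and since $\phi^{\,3}=\id$ with $3$ prime, $\phi$ has order exactly $3$.

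The main obstacle is precisely the multiplicativity step: an attempt to verify it by expanding associators runs in circles, because the natural associator identities relating $(x\cdot w^{\cdot2})\cdot(w\cdot y\cdot w)$ to $(x\cdot y)\cdot w$ reduce to one another. The resolution is to exploit $w^{\cdot3}=1$ in order to symmetrize the asymmetric outer factors $w$ and $w^{\cdot2}$ into the palindromic block $w\cdot y\cdot w$, after which a single application of the right Moufang identity finishes the computation.
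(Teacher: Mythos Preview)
Your proof is correct and follows essentially the same route as the paper's: rewrite $\phi(y)$ as $(w\cdot y\cdot w)\cdot w$, apply the middle Moufang identity with $a=w$, then the right Moufang identity to collapse the inner factor, using $w^{\cdot 3}=1$ throughout. The paper's proof is just the bare two-line Moufang computation; your additional remarks on bijectivity and norm preservation are correct but redundant (multiplicativity plus $\phi(1)=1$ already makes $\phi$ an automorphism of the Hurwitz algebra, and the norm is determined by the algebra structure), while your treatment of the order-$3$ claim is more explicit than the paper's one-sentence assertion.
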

\begin{proof}
For any $x,y\in \cC$,
\[
\begin{aligned}
\bigl(w\cdot x\cdot w^{\cdot 2}\bigr)&\cdot\bigl(w\cdot y\cdot w^{\cdot 2}\bigr)&&\\
&=
w\cdot\Bigl((x\cdot w^{\cdot 2})\cdot (w\cdot y\cdot w)\Bigr)\cdot w&&\text{(Middle Moufang Identity)}\\
&=w\cdot\Bigl(\bigl(\bigl((x\cdot w^{\cdot 2})\cdot w\bigl)\cdot y\bigl)\cdot w\Bigl)\cdot w&&
\text{(Right Moufang Identity)}\\
&=w\cdot\bigl((x\cdot y)\cdot w\bigr)\cdot w=w\cdot(x\cdot y)\cdot w^{\cdot 2}, &&
\end{aligned}
\]
where the Moufang identities (\cite{Schafer}{p.28}) have been used, together with the fact that any two elements generate an associative subalgebra. This also shows that the order of this automorphism is $1$ if $w\in\FF 1$ and $3$ otherwise.
\end{proof}

\begin{theorem}\label{th:tau_w}
Let $\tau$ be an order $3$ automorphism of a Cayley algebra $(\cC,\cdot,n)$. Then the Petersson algebra $\cC_\tau$ is para-Cayley if and only if there is an element $w\in\cC\setminus \FF 1$ with $w^{\cdot 2}+w+1=0$ such that $\tau(x)=w\cdot x\cdot w^{\cdot 2}$ for any  $x\in\cC$. 

In this case, the element $w$ is the para-unit of $\cC_\tau$.
\end{theorem}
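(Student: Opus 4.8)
The plan is to run both implications off a single bridging identity. For an \emph{arbitrary} $e\in\cC$ (not yet assumed idempotent), expand the automorphism $\tau_e\colon x\mapsto e*(e*x)$ attached to $e$ in the Petersson algebra $\cC_\tau$ purely in terms of the Hurwitz product and of $\tau$. Using the definition $x*y=\tau(\bar x)\cdot\tau^2(\bar y)$ of \eqref{eq:Petersson}, the facts that $\overline{a\cdot b}=\bar b\cdot\bar a$, that $\tau$ commutes with conjugation and with itself (so in particular $\tau\in\Aut(\cC_\tau)$, by a one-line check), and that $\tau^3=\id$, I compute $\overline{e*x}=\tau^2(x)\cdot\tau(e)$ and then
\[
\tau_e(x)=\tau(\bar e)\cdot\tau^2\bigl(\overline{e*x}\bigr)=\tau(\bar e)\cdot\bigl(\tau(x)\cdot e\bigr).
\]
This identity is the engine for everything below; deriving it is routine bookkeeping with $\bar{\bar x}=x$ and $\tau^3=\id$.

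For the forward direction, assume $\cC_\tau$ is para-Cayley. Being eight-dimensional, it has a \emph{unique} para-unit $e$ (its commutative center is the line $\FF e$, whose only nonzero idempotent is $e$), and since $\tau\in\Aut(\cC_\tau)$ preserves para-units, $\tau(e)=e$; hence $\tau(\bar e)=\bar e$. A para-unit is exactly an idempotent with $\tau_e=\id$, so the bridge identity gives $\bar e\cdot\bigl(\tau(x)\cdot e\bigr)=x$. From $e*e=e$ one gets $n(e)=n(e)^2$, and $n(e)\neq0$ (it is a Hurwitz unit for $\cdot_e$), so $n(e)=1$. Multiplying $\bar e\cdot\bigl(\tau(x)\cdot e\bigr)=x$ on the left by $e$ (using $e\cdot(\bar e\cdot z)=n(e)z=z$) yields $e\cdot x=\tau(x)\cdot e$, and then multiplying on the right by $\bar e$ (using $(q\cdot e)\cdot\bar e=n(e)q=q$) gives $\tau(x)=(e\cdot x)\cdot\bar e$. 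It remains to identify $\bar e$: idempotency $e*e=\tau(\bar e)\cdot\tau^2(\bar e)=\bar e\cdot\bar e=e$ gives $\bar e^{\cdot2}=e$, and feeding $\bar e=n(e,1)1-e$ and $n(e)=1$ through the Cayley–Hamilton equation turns this into $(n(e,1)+1)\bigl(e-(n(e,1)-1)1\bigr)=0$. If $n(e,1)\neq-1$ then $e\in\FF1$, forcing $\tau=\id$, which is excluded since $\tau$ has order $3$. Hence $n(e,1)=-1$, so $e^{\cdot2}+e+1=0$ and $\bar e=e^{\cdot2}$; putting $w=e\notin\FF1$ gives $\tau(x)=(w\cdot x)\cdot w^{\cdot2}$, and $w$ is the para-unit.

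For the backward direction, let $w\in\cC\setminus\FF1$ satisfy $w^{\cdot2}+w+1=0$. Multiplying by $w$ and subtracting yields $w^{\cdot3}=1$, so Lemma \ref{le:tau_w} makes $\tau\colon x\mapsto w\cdot x\cdot w^{\cdot2}$ an automorphism of order $3$, and comparison with Cayley–Hamilton gives $\bar w=w^{\cdot2}$ and $\tau(w)=w$. One checks $w*w=\bar w\cdot\bar w=w^{\cdot4}=w$, so $w$ is idempotent in $\cC_\tau$. Applying the bridge identity with $e=w$ and $\tau(\bar w)=\bar w=w^{\cdot2}$, I get $\tau_w(x)=w^{\cdot2}\cdot\bigl((w\cdot x\cdot w^{\cdot2})\cdot w\bigr)$, which collapses to $x$ using the vanishing associators $[\,z,w^{\cdot2},w\,]=0$ and $[\,w^{\cdot2},w,x\,]=0$ (both reduce, via $w^{\cdot2}=-w-1$, to associators with a repeated argument, hence zero by alternativity). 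Thus $\tau_w=\id$ on an idempotent $w$, so $x*y=\bar x^{\,w}\cdot_w\bar y^{\,w}$ in the Hurwitz algebra $(\cC,\cdot_w)$ of \eqref{eq:exye}; that is, $\cC_\tau$ is the para-Hurwitz algebra of an eight-dimensional Hurwitz algebra, i.e.\ para-Cayley, with para-unit $w$.

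I expect the main obstacle to be precisely the non-associative manipulations: the several cancellations rely on alternativity and on the fact that powers of $w$ associate freely (because $w^{\cdot2}$ is an affine combination of $1$ and $w$), and these must be justified by the vanishing of the relevant associators rather than by naive regrouping. The secondary delicate point is the degenerate-case analysis in the forward direction — cleanly excluding $e\in\FF1$ (equivalently $\tau=\id$) to force $n(e,1)=-1$.
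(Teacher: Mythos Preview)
Your proof is correct. The core forward-direction argument matches the paper's: both start from the para-unit $e$, use $\tau(e)=e$, and extract $n(e,1)=-1$ from Cayley--Hamilton to conclude $e^{\cdot 2}+e+1=0$ and $\tau(x)=e\cdot x\cdot e^{\cdot 2}$. The organizational difference is that you isolate the single ``bridge identity'' $\tau_e(x)=\tau(\bar e)\cdot\bigl(\tau(x)\cdot e\bigr)$ and reuse it in both directions, whereas the paper does an ad hoc computation of $x*e$ in two ways.

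The backward direction is handled differently. The paper verifies the para-unit property directly: it expands $w*x$ and $x*w$ (using $\tau(w)=w$, $\bar w=w^{\cdot 2}$) and shows both equal $n(w,x)w-x$. You instead show $\tau_w=\id$ via the bridge identity and alternativity, and then invoke the general idempotent/Petersson correspondence \eqref{eq:e_tau}--\eqref{eq:Petersson} to conclude that $(\cC,*)$ is the para-Hurwitz algebra of $(\cC,\cdot_w)$. Both work; the paper's route is marginally more self-contained (it does not need the general correspondence), while yours exhibits the conceptual reason para-Cayley $\Leftrightarrow$ $\tau_e=\id$ for a suitable idempotent.

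Two small remarks. First, your justification ``$n(e)\neq 0$ (it is a Hurwitz unit for $\cdot_e$)'' is a bit circular in presentation; the clean argument is $(e*e)*e=n(e)e$ by \eqref{eq:xyx}, so $e=n(e)e$ and $n(e)=1$. Second, when you write $\tau(x)=(w\cdot x)\cdot w^{\cdot 2}$, it is worth saying once that the parenthesization is immaterial because $[w,x,w^{\cdot 2}]=-[w,x,w]=0$; you do this implicitly via your associator remarks, but making it explicit at the point where you match the statement's unparenthesized $w\cdot x\cdot w^{\cdot 2}$ would be cleaner.
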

\begin{proof}
Let $\cC_\tau=(\cC,*,n)$, where $x*y=\tau(\bar x)\cdot\tau^2(\bar y)$ for any $x,y\in\cC$. Assume first that $\cC_\tau$ is para-Cayley, and let $e$ be its para-unit. Since $\tau$ is  also an automorphism of $\cC_\tau$, $\tau(e)=e$. Also $n(e)=1$ and $e\not\in\FF 1$ (otherwise $\tau$ would be the identity).

For any $x\in\cC$,
\[
x*e=\begin{cases} n(e,x)e-x,\quad\text{because $e$ is the para-unit,}\\
 \tau(\bar x)\cdot\tau^2(\bar e)=\tau(\bar x)\cdot \bar e.&
 \end{cases}
\]
Therefore 
\[
n(e,x)e^{\cdot 2}-x\cdot e=(x*e)\cdot e=(\tau(\bar x)\cdot \bar e)\cdot e=\tau(\bar x),
\]
and we get, for any $x\in\cC$,
\[
\tau(x)=n(e,\bar x)e^{\cdot 2}-\bar x\cdot e.
\]
In particular $e=n(e,\bar e)e^{\cdot 2}-1$ and, since $e^{\cdot 2}-n(e,1)e+n(e)1=0$ and $1$ and $e$ are linearly independent, we conclude that $n(e,1)=-1=n(e,\bar e)$, so $e^{\cdot 2}+e+1=0$ and
\[
\tau(x)=n(e,\bar x)e^{\cdot 2}-\bar x\cdot e=(e\cdot x+\bar x\cdot \bar e)\cdot e^{\cdot 2} -\bar x\cdot e= e\cdot x\cdot e^{\cdot 2},
\]
for any $x\in\cC$.

Conversely, for $w\in\cC\setminus \FF 1$ with $w^{\cdot 2}+w+1=0$ such that $\tau(x)=w\cdot x\cdot w^{\cdot 2}$, and  with $x*y=\tau(\bar x)\cdot \tau^2(\bar y)$, one gets
\[
\begin{split}
w*x&=\tau(\bar w)\cdot\tau^2(\bar x)=\bar w\cdot w^{\cdot 2}\cdot\bar x\cdot w\\
 &=w\cdot\bar x\cdot w=\bigl(n(w,x)1-x\cdot\bar w\bigr)\cdot w=n(w,x)w-x,
 \end{split}
\]
and, in the same vein, $x*w=n(x,w)w-x=w*x$. Therefore, $w$ is a para-unit of $\cC_\tau$, so that $\cC_\tau$ is para-Cayley.
\end{proof}

\begin{remark}\label{re:tau_w}
Let $(\cC,\cdot,n)$ be a Cayley algebra and let $w\in\cC\setminus\FF 1$ be an element such that $w^{\cdot 2}+w+1=0$. Denote by $\tau_w$ the order $3$ automorphism $x\mapsto w\cdot x\cdot w^{\cdot 2}$.
\begin{itemize}
\item If $\charac\FF\ne 3$, then the subalgebra generated by $w$ is $\alg\langle w\rangle=\FF 1+\FF w$, which is isomorphic to the quadratic \'etale algebra $\FF[X]/(X^2+X+1)$. For any element $x\in\cC$ orthogonal to $\alg\langle w\rangle$,
\[
\tau_w(x)=w\cdot x\cdot w^{\cdot 2}=w\cdot x\cdot \bar w=x\cdot \bar w^{\cdot 2}=x\cdot w.
\]
But $n(1-w)=3\ne 0$, so the right multiplication by $1-w$ is a bijection, and hence $\tau_w(x)\ne x$. Therefore, the subalgebra $\Fix(\tau_w)$ of the elements fixed by $\tau_w$ coincides with $\alg\langle w\rangle=\FF 1+\FF w$.

If $\tilde w$ is another element in $\cC\setminus \FF 1$ with $\tilde w^{\cdot 2}+\tilde w+1=0$, there there is an isomorphism $\varphi:\alg\langle w\rangle\rightarrow \alg\langle \tilde w\rangle$ with $\varphi(w)=\tilde w$. By means of the Cayley-Dickson doubling process, this isomorphism $\varphi$ can be extended to an automorphism of $(\cC,\cdot,n)$, also denoted by $\varphi$. Then $\tau_{\tilde w}=\varphi\circ\tau_w\circ\varphi^{-1}$, so that all these order $3$ automorphisms are conjugate.

\item If $\charac\FF=3$, then $0=w^{\cdot 2}+w+1=(w-1)^{\cdot 2}$, so the nonzero element $u=w-1$ satisfies $u^{\cdot 2}=0$, and hence $(\cC,\cdot,n)$ is the split Cayley algebra and, by Lemma \ref{le:useful}, there is a canonical basis with $u=u_1$, so that $w=1+u_1$, $w^{\cdot 2}=1-u_1$. In particular, again all these order $3$ automorphisms are conjugate. Computing in this canonical basis one gets:
\[
\begin{split}
&1\xrightarrow{\tau_w-\id} 0,\\
&v_1\xrightarrow{\tau_w-\id}(1+u_1)\cdot v_1\cdot(1-u_1)-v_1=-e_1+e_2+u_1\\
&\hspace*{1in}
\xrightarrow{\tau_w-\id}(1+u_1)\cdot e_1\cdot(1-u_1)-e_1=-u_1\xrightarrow{\tau_w-\id} 0,\\
&u_2\xrightarrow{\tau_w-\id}(1+u_1)\cdot u_2\cdot(1-u_1)-u_2=-v_3\xrightarrow{\tau_w-\id} 0,\\
&u_3\xrightarrow{\tau_w-\id}(1+u_1)\cdot u_3\cdot(1-u_1)-u_3=v_2\xrightarrow{\tau_w-\id} 0,
\end{split}
\]
Thus, the Segre symbol of the nilpotent linear map $\tau_w-\id$ is $(3,2^2,1)$.
\end{itemize}
\end{remark}

\begin{corollary}\label{co:idempotents_paraCayley}
Let $(\cC,\cdot,n)$ be a Cayley algebra and $(\cC,\bullet,n)$ the associated para-Cayley algebra ($x\bullet y=\bar x\cdot \bar y$ for any $x,y\in\cC$). Then
\[
\{\text{idempotents of $(\cC,\bullet,n)$}\}=\{1\}\cup\{w\in\cC\setminus\FF 1: w^{\cdot 2}+w+1=0\},
\]
and all the idempotents, with the exception of the para-unit $1$, are conjugate under $\Aut(\cC,\cdot,n)=\Aut(\cC,\bullet,n)$.
In particular,
\begin{itemize}
\item
If either $\charac\FF\ne 3$ and $(\cC,\bullet,n)$ contains a subalgebra isomorphic to the quadratic \'etale algebra $\cK=\FF[X]/(X^2+X+1)$, or $\charac\FF=3$ and $(\cC,\cdot,n)$ is split, then $(\cC,\bullet,n)$ contains the para-unit and a unique conjugacy class of other idempotents.
\item Otherwise, the only idempotent of $(\cC,\bullet,n)$ is its para-unit.
\end{itemize}
\end{corollary}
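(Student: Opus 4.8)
The plan is to characterize the idempotents directly from the Cayley--Hamilton equation, then invoke the conjugacy already established in Remark \ref{re:tau_w}, and finally translate the existence of non-para-unit idempotents into the stated structural conditions, separating the two characteristics.

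First I would compute. An element $e\in\cC$ is an idempotent of $(\cC,\bullet,n)$ precisely when $e\bullet e=\bar e\cdot\bar e=e$, i.e. $\bar e^{\cdot 2}=e$. Applying the Cayley--Hamilton equation to $\bar e$, and using $n(\bar e,1)=n(e,1)$ and $n(\bar e)=n(e)$, gives $\bar e^{\cdot 2}=n(e,1)\bar e-n(e)1=\bigl(n(e,1)^2-n(e)\bigr)1-n(e,1)e$. If $e\in\FF 1$, writing $e=\lambda 1$ forces $\lambda^2=\lambda$, so the only idempotent here is $e=1$, the para-unit. If $e\notin\FF 1$, then $\{1,e\}$ is linearly independent and equating coefficients in $\bar e^{\cdot 2}=e$ yields $n(e,1)=-1$ and $n(e)=n(e,1)^2=1$; substituting into the Cayley--Hamilton equation for $e$ itself gives $e^{\cdot 2}+e+1=0$. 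Conversely, if $w\in\cC\setminus\FF 1$ satisfies $w^{\cdot 2}+w+1=0$, then Cayley--Hamilton forces $n(w,1)=-1$ and $n(w)=1$, whence $\bar w=-1-w$ and $\bar w^{\cdot 2}=1+2w+w^{\cdot 2}=w$, so $w\bullet w=w$. This establishes the displayed description of the idempotent set.

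For the conjugacy assertion, I would recall that $\Aut(\cC,\cdot,n)=\Aut(\cC,\bullet,n)$, since the two algebras share the same automorphism group scheme. The para-unit $1$ is the unity of $(\cC,\cdot,n)$, hence fixed by every automorphism, so it is a one-element conjugacy class. For two idempotents $w,\tilde w\in\cC\setminus\FF 1$ with $w^{\cdot 2}+w+1=0=\tilde w^{\cdot 2}+\tilde w+1$, Remark \ref{re:tau_w} (in both characteristics) produces an automorphism $\varphi\in\Aut(\cC,\cdot,n)$ with $\varphi(w)=\tilde w$, obtained by extending through the Cayley--Dickson doubling process the isomorphism $\alg\langle w\rangle\to\alg\langle\tilde w\rangle$ that sends $w\mapsto\tilde w$. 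Hence all idempotents other than the para-unit are conjugate.

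It then remains to decide when non-para-unit idempotents exist. If $\charac\FF\neq 3$, then $\alg\langle w\rangle=\FF 1+\FF w\cong\FF[X]/(X^2+X+1)=\cK$, which is quadratic \'etale because $X^2+X+1$ has nonzero discriminant $-3$; thus such a $w$ exists exactly when $(\cC,\bullet,n)$ contains a subalgebra isomorphic to $\cK$. If $\charac\FF=3$, then $w^{\cdot 2}+w+1=(w-1)^{\cdot 2}$, so a non-para-unit idempotent $w$ amounts to a nonzero element $u=w-1$ with $u^{\cdot 2}=0$; such an element exists if and only if the norm is isotropic, i.e. $(\cC,\cdot,n)$ is split, since a nonsplit Cayley algebra is a division algebra with no nonzero square-zero elements. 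Combining these existence criteria with the conjugacy above yields the two cases of the ``In particular'' part. I expect no serious obstacle: the only ingredient beyond a direct Cayley--Hamilton computation is the conjugacy, which is precisely the Cayley--Dickson extension argument already carried out in Remark \ref{re:tau_w}.
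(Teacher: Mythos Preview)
Your proof is correct and follows essentially the same approach as the paper's: a direct Cayley--Hamilton computation to characterize the idempotents, followed by the conjugacy argument from Remark \ref{re:tau_w}. The only cosmetic difference is that the paper first notes $n(w)=1$ from multiplicativity of the norm and then uses $w^{\cdot 2}=\bar w$, whereas you expand $\bar e^{\cdot 2}$ directly; your more explicit treatment of the ``In particular'' cases is a welcome elaboration of what the paper leaves implicit.
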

\begin{proof}
If $w\in\cC\setminus\FF 1$ satisfies $w\bullet w=w$, then $n(w)=1$ and $w^{\cdot 2}=\bar w=n(1,w)1-w$. As $w^{\cdot 2}-n(1,w)w+n(w)1=0$, it follows that $n(1,w)=-1$ and $w^{\cdot 2}+w+1=0$. Conversely, if $w^{\cdot 2}+w+1=0$ for $w\in\cC\setminus\FF 1$, then $n(1,w)=-1$ so $w^{\cdot 2}=\bar w$, and $w\bullet w=w$. Now the arguments in Remark \ref{re:tau_w} apply.
\end{proof}

\smallskip

We finish this section by looking at the (easier) situation in dimension $2$ and $4$. (See also \cite[Theorem 3.2]{EP96}.)

\begin{proposition}\label{pr:order_3_Q}
Let $\tau$ be an order $3$ automorphism of a quaternion algebra $(\cQ,\cdot,n)$. Then there exists an element $w\in\cQ\setminus\FF 1$ with $w^{\cdot 2}+w+1=0$ such that $\tau(x)=w\cdot x\cdot w^{\cdot 2}$ for any $x\in\cQ$. All these automorphisms are conjugate.
\end{proposition}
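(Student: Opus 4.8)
The plan is to reduce the entire statement to the Skolem--Noether theorem, exploiting that a quaternion algebra $(\cQ,\cdot,n)$ is an associative central simple $\FF$-algebra with centre $\FF 1$. First I would invoke this theorem to write $\tau$ as an inner automorphism, $\tau(x)=a\cdot x\cdot a^{-1}$ for some invertible $a\in\cQ$; since $\tau\neq\id$, the element $a$ does not lie in the centre $\FF 1$. The whole problem then becomes one of rescaling $a$ to the required $w$.

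For this, $\tau^{3}=\id$ forces $a^{\cdot 3}$ to commute with every $x\in\cQ$, hence $a^{\cdot 3}\in\FF 1$. Writing $t=n(a,1)$ and using the Cayley--Hamilton equation $a^{\cdot 2}=t\,a-n(a)1$, a direct computation gives
\[
a^{\cdot 3}=\bigl(t^{2}-n(a)\bigr)a-t\,n(a)\,1 .
\]
Because $a\notin\FF 1$, the elements $1$ and $a$ are linearly independent, so $a^{\cdot 3}\in\FF 1$ forces $n(a)=t^{2}$. As $a$ is invertible, $n(a)\neq 0$, whence $t\neq 0$. Setting $w=-t^{-1}a$, which again lies outside $\FF 1$ and induces the same inner automorphism as $a$, the relation $n(a)=t^{2}$ yields $w^{\cdot 2}=t^{-1}a-1=-w-1$, i.e. $w^{\cdot 2}+w+1=0$. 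Then $w^{\cdot 3}=1$, so $w^{-1}=w^{\cdot 2}$ and $\tau(x)=w\cdot x\cdot w^{\cdot 2}$, as desired.

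It remains to prove that any two such automorphisms $\tau_{w}$ and $\tau_{\tilde w}$ are conjugate (which, together with the first part, shows all order $3$ automorphisms are conjugate); this amounts to producing $\varphi\in\Aut(\cQ,\cdot,n)$ with $\varphi(w)=\tilde w$, for then $\varphi\circ\tau_{w}\circ\varphi^{-1}=\tau_{\tilde w}$. Here I would mirror the dichotomy of Remark \ref{re:tau_w}. If $\charac\FF\neq 3$, then $\FF 1+\FF w$ is a quadratic \'etale subalgebra isomorphic to $\FF[X]/(X^{2}+X+1)$, and the isomorphism $\FF 1+\FF w\to\FF 1+\FF\tilde w$ sending $w\mapsto\tilde w$ extends, via the Cayley--Dickson doubling process, to an automorphism of $\cQ$. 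If $\charac\FF=3$, then $u=w-1$ satisfies $u^{\cdot 2}=0$ and $u\neq 0$, so $\cQ$ contains a nonzero square-zero element and is therefore split, $\cQ\cong M_{2}(\FF)$; the quaternion analogue of the second part of Lemma \ref{le:useful} then shows that any two nonzero square-zero elements are conjugate, hence so are $w=1+u$ and $\tilde w=1+\tilde u$.

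The routine ingredients are the Cayley--Hamilton computation and the verification of the inner-automorphism identity. The one genuinely delicate point is the conjugacy in characteristic $3$: there $\FF 1+\FF w$ is a degenerate (non-composition) subalgebra, so the Cayley--Dickson extension argument is unavailable and one must instead argue through the forced splitting of $\cQ$ and the conjugacy of square-zero elements, exactly as the Cayley case is treated in Remark \ref{re:tau_w}.
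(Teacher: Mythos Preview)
Your proof is correct and follows essentially the same strategy as the paper: invoke Skolem--Noether, then rescale the inner element $a$ to obtain $w$ with $w^{\cdot 2}+w+1=0$, and finally handle conjugacy exactly as in Remark~\ref{re:tau_w}. The only difference is cosmetic: the paper rescales by setting $w=\frac{1}{n(a^{\cdot 2})}a^{\cdot 4}$ and checks $n(w)=1$, $w^{\cdot 3}=1$, whereas you compute $a^{\cdot 3}$ via Cayley--Hamilton to extract the condition $n(a)=t^{2}$ and then take $w=-t^{-1}a$; a quick check shows both normalizations produce the very same element.
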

\begin{proof}
By the Noether-Skolem Theorem, there is an invertible element $a\in\cQ\setminus\FF 1$ such that $\tau(x)=a\cdot x\cdot a^{-1}$ and $a^{\cdot 3}=\lambda 1$, $\lambda\in\FF^\times$. Then $a^{\cdot 4}\in\FF^\times a$, so with $w=\frac{1}{n(a^{\cdot 2})}a^{\cdot 4}$, $\tau(x)=w\cdot x\cdot w^{-1}$ and $n(w)=1$. Besides, $w^{\cdot 3}=\frac{1}{n(a)^6}a^{\cdot 12}=\frac{1}{\lambda^4}(\lambda 1)^{\cdot 4}=1$. Then $w^{\cdot 2}=\bar w\cdot w^{\cdot 3}=\bar w$ and, as in the proof of Corollary \ref{co:idempotents_paraCayley}, we get $w^{\cdot 2}+w+1=0$. The conjugation of these automorphisms follows as in Remark \ref{re:tau_w}.
\end{proof}

\begin{remark}\label{re:Q}
Under the conditions of Proposition \ref{pr:order_3_Q}, and as in Theorem \ref{th:tau_w}, the Petersson algebra $\cQ_\tau$ is a para-quaternion algebra with para-unit $w$. Since the norm is the same $n$, $\cQ_\tau$ is isomorphic to $(\cQ,\bullet,n)$.
\end{remark}

\begin{corollary}\label{co:idempotentes_paraQ}
Let $(\cQ,\cdot,n)$ be a quaternion algebra and $(\cQ,\bullet,n)$ the associated para-quaternion algebra. Then
\[
\{\text{idempotents of $(\cQ,\bullet,n)$}\}=\{1\}\cup\{w\in\cQ\setminus\FF 1: w^{\cdot 2}+w+1=0\},
\]
and all the idempotents, with the exception of the para-unit $1$, are conjugate under $\Aut(\cQ,\cdot,n)=\Aut(\cQ,\bullet,n)$.
In particular,
\begin{itemize}
\item
If either $\charac\FF\ne 3$ and $(\cQ,\bullet,n)$ contains a subalgebra isomorphic to the quadratic \'etale algebra $\cK=\FF[X]/(X^2+X+1)$, or $\charac\FF=3$ and $(\cQ,\cdot,n)$ is split, then $(\cQ,\bullet,n)$ contains the para-unit and a unique conjugacy class of other idempotents.
\item Otherwise, the only idempotent of $(\cQ,\bullet,n)$ is its para-unit.
\end{itemize}
\end{corollary}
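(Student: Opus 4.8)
The plan is to follow the proof of Corollary \ref{co:idempotents_paraCayley} almost verbatim, since the quaternion case differs from the Cayley case only in the ingredient invoked for the conjugacy statement. The displayed set equality rests solely on the Cayley--Hamilton relation $x^{\cdot 2}-n(x,1)x+n(x)1=0$, which holds in every Hurwitz algebra, so that part of the argument transfers with no change whatsoever.

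First I would establish the set equality. If $w\in\cQ\setminus\FF 1$ satisfies $w\bullet w=w$, then from $w\bullet w=\bar w\cdot\bar w$ and $\overline{\bar w\cdot\bar w}=w^{\cdot 2}$ one obtains $w^{\cdot 2}=\bar w=n(1,w)1-w$; substituting into Cayley--Hamilton and using the linear independence of $1$ and $w$ forces $n(1,w)=-1$ and $n(w)=1$, hence $w^{\cdot 2}+w+1=0$. Conversely, if $w^{\cdot 2}+w+1=0$ with $w\notin\FF 1$, comparison with Cayley--Hamilton again gives $n(1,w)=-1$, so $\bar w=w^{\cdot 2}$ and $w^{\cdot 3}=1$, whence $w\bullet w=\bar w\cdot\bar w=w^{\cdot 4}=w$. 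The para-unit $1$ accounts for the remaining idempotent, and this is exactly the description claimed.

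For the conjugacy statement I would reuse the reasoning of Proposition \ref{pr:order_3_Q} together with Remark \ref{re:tau_w}: if $w,\tilde w\in\cQ\setminus\FF 1$ both satisfy $X^2+X+1=0$, then $w\mapsto\tilde w$ defines an isomorphism $\alg\langle w\rangle\to\alg\langle\tilde w\rangle$ of two-dimensional subalgebras, and the Cayley--Dickson doubling process extends it to an automorphism of $(\cQ,\cdot,n)=(\cQ,\bullet,n)$ sending $w$ to $\tilde w$. (Equivalently, by Remark \ref{re:Q} each such $w$ is the unique para-unit of the Petersson algebra $\cQ_{\tau_w}$, and all the automorphisms $\tau_w$ are mutually conjugate.) Thus every idempotent other than $1$ lies in a single conjugacy class. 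The \emph{In particular} dichotomy then follows by deciding when an element $w\in\cQ\setminus\FF 1$ with $w^{\cdot 2}+w+1=0$ exists at all: when $\charac\FF\ne 3$ the polynomial $X^2+X+1$ is separable and $\alg\langle w\rangle=\FF 1+\FF w\cong\cK$ is quadratic étale, so such a $w$ exists precisely when $\cQ$ contains a copy of $\cK$; when $\charac\FF=3$ we have $X^2+X+1=(X-1)^2$, so with $u=w-1$ the condition reads $u^{\cdot 2}=0$ with $u\ne 0$, and a nonzero square-zero element exists exactly when the norm of $\cQ$ is isotropic, i.e. $\cQ$ is split.

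The only point requiring genuine care—a mild one—is the conjugacy step in characteristic $3$. There the Gram determinant of $n$ on $\FF 1+\FF w$ equals $3=0$, so $\alg\langle w\rangle$ is degenerate and the separable Cayley--Dickson doubling used for $\charac\FF\ne 3$ is not directly available. I would instead observe that $u=w-1$ is a nonzero element with $u^{\cdot 2}=0$ and appeal to the conjugacy of such square-zero elements (the split-quaternion analogue of the second assertion of Lemma \ref{le:useful}) to move any admissible $w$ to a fixed one, thereby recovering the single conjugacy class in the split case as well.
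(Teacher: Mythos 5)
Your proposal is correct and follows essentially the route the paper intends for this corollary: the set equality via the Cayley--Hamilton relation exactly as in the proof of Corollary \ref{co:idempotents_paraCayley}, and the conjugacy via the extension argument of Remark \ref{re:tau_w} (Cayley--Dickson doubling in characteristic $\neq 3$, conjugacy of nonzero square-zero elements of $M_2(\FF)$ in characteristic $3$), which is precisely how Proposition \ref{pr:order_3_Q} and Remark \ref{re:Q} are invoked. Your explicit observation that the characteristic-$3$ step needs the $M_2(\FF)$ analogue of the second assertion of Lemma \ref{le:useful} is a correct and welcome clarification, not a deviation.
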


Finally, the next remark settles the situation for two-dimensional symmetric composition algebras. 

\begin{remark}\label{re:dim_2}
The automorphism group $\Aut(\cK,\cdot,n)$ for a two-dimensional Hurwitz algebra (i.e., an \'etale quadratic algebra) is the cyclic group of order $2$. In particular, it does not contain order $3$ automorphisms. Hence, if a two-dimensional symmetric composition algebra $(\cS,*,n)$ contains an idempotent, this idempotent is a para-unit, so that $(\cS,*,n)$ is a para-quadratic algebra. Moreover,
\begin{itemize}
\item 
If $\charac\FF =3$, the para-unit is the only idempotent of a para-quadratic algebra.
\item
If $\charac\FF\neq 3$, the only para-quadratic algebra containing other idempotents is, up to isomorphism, the para-quadratic algebra $(\cK,\bullet,n)$ associated to the quadratic \'etale algebra $(\cK,\cdot,n)$, with $\cK=\FF 1\oplus\FF w$ and $w^{\cdot 2}+w+1=0$. Then
\[
\{\text{idempotents of $(\cQ,\bullet,n)$}\}=\{1,w,w^{\cdot 2}\},
\]
and $\Aut(\cK,\bullet,n)$ is the symmetric group $\sfS_3$ of degree $3$ (see \cite[(34.5)]{KMRT}), which acts permuting the three idempotents. Actually, the affine group scheme of automorphisms $\AAut(\cK,\bullet,n)$ is the corresponding constant group scheme, also denoted by $\sfS_3$.
\end{itemize}
\end{remark}

\bigskip
%---------------------------------------
% SECTION : CHARACTERISTIC NOT 3
%---------------------------------------

\section{Order $3$ elements in $G_2$ and idempotents, $\charac\FF\ne 3$}\label{se:char_not_3}

Over an algebraic closed field of characteristic $\ne 3$, any order $3$ element of the algebraic group of type $G_2$ is semisimple and contained in a torus. By conjugacy of the maximal tori, any such element $\tau$ is conjugate to an element in the torus consisting of the diagonal matrices in the subgroup $\SSL_3(\FF)$ in \eqref{eq:Stabilizer_eis}. Therefore, there is a primitive cubic root $\omega$ of $1$ such that $\tau$ is conjugate to $\diag(\omega,\omega,\omega)$ or $\diag(1,\omega,\omega^2)$ (changing $\omega$ by $\omega^2$ gives a conjugate element). This shows that there are exactly two conjugacy classes of such elements. The same can be deduced from the description of finite order automorphisms in \cite[Ch. VIII]{Kac} in characteristic $0$ and its extension to the modular case by Serre \cite{Serre} (see also \cite[Theorem 6.3]{PL}).

Over arbitrary fields of characteristic $\neq 3$, the situation is not much worse.

\begin{theorem}\label{th:char_not3}
Let $(\cC,\cdot,n)$ be a Cayley algebra over a field $\FF$ of characteristic not $3$, and let $\tau$ be an order $3$ automorphism of $(\cC,\cdot,n)$. Then one of the following conditions holds:
\begin{enumerate}
\item There is an element $w\in\cC\setminus \FF 1$ with $w^{\cdot 2}+w+1=0$ such that
\[
\tau(x)=w\cdot x\cdot w^2
\]
for any $x\in\cC$. In this case, the Petersson algebra $\cC_\tau$ is para-Cayley with para unit $w$, and the subalgebra $\Fix(\tau)$ of the elements fixed by $\tau$ is $\FF 1+\FF w$ (a quadratic \'etale subalgebra). Any two such automorphisms are conjugate in $\Aut(\cC,\cdot,n)$.

\item The subalgebra of fixed elements by $\tau$ is a quaternion subalgebra of $\cC$ containing an element $w\in\cC\setminus\FF 1$ such that $w^{\cdot 2}+w+1=0$. In this case the Petersson algebra $\cC_\tau$ is an Okubo algebra. Any two such automorphisms are conjugate in $\Aut(\cC,\cdot,n)$ if and only if the corresponding quaternion subalgebras are isomorphic. \\
In particular, if $\FF$ contains the cubic roots of $1$, then $\cC$ is the split Cayley algebra, $\cC_\tau$ is the split Okubo algebra, and any two such automorphisms are conjugate.
\end{enumerate}
\end{theorem}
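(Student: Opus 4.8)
The plan is to run the dichotomy of Theorem~\ref{th:paraH_or_Okubo}. Since $\tau$ has order $3$, the Petersson algebra $\cC_\tau$ is an eight-dimensional symmetric composition algebra, hence either a para-Cayley algebra or an Okubo algebra, and these two possibilities will produce cases (1) and (2) respectively. A preliminary fact used throughout is that $\Fix(\tau)$ is a composition subalgebra of $(\cC,\cdot,n)$. Indeed, because $\charac\FF\neq 3$ the polynomial $X^3-1=(X-1)(X^2+X+1)$ has coprime factors, so $\cC=\Fix(\tau)\oplus M$ with $M=\ker(\tau^2+\tau+\id)$; and for $x\in\Fix(\tau)$, $y\in M$ the isometry property of $\tau$ gives $3\,n(x,y)=n\bigl(x,\,y+\tau(y)+\tau^2(y)\bigr)=0$, whence $\Fix(\tau)\perp M$ and $n$ restricts nondegenerately to $\Fix(\tau)$. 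As $\tau\neq\id$ and $\dim_\FF\Fix(\tau)$ is invariant under base change, the discussion opening this section (two classes over $\Falg$, with fixed-space dimensions $2$ for $\diag(\omega,\omega,\omega)$ and $4$ for $\diag(1,\omega,\omega^2)$) shows $\dim_\FF\Fix(\tau)\in\{2,4\}$, with dimension $2$ exactly in the para-Cayley case.

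The para-Cayley case is immediate: if $\cC_\tau$ is para-Cayley, Theorem~\ref{th:tau_w} provides $w\in\cC\setminus\FF 1$ with $w^{\cdot 2}+w+1=0$ and $\tau(x)=w\cdot x\cdot w^{\cdot 2}$, and identifies $w$ as the para-unit; the equality $\Fix(\tau)=\FF 1+\FF w$ (a quadratic \'etale subalgebra) and the conjugacy of any two such automorphisms are precisely the first item of Remark~\ref{re:tau_w}. This is case (1).

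Now suppose $\cC_\tau$ is an Okubo algebra; then $\cQ\bydef\Fix(\tau)$ has dimension $4$, hence is a quaternion subalgebra. The crux is to produce $w\in\cQ\setminus\FF 1$ with $w^{\cdot 2}+w+1=0$, equivalently to embed $L\bydef\FF[X]/(X^2+X+1)$ into $\cQ$. If $\cQ$ is split this is clear, as $M_2(\FF)$ contains a companion matrix of $X^2+X+1$. Otherwise $\cQ$ is a division algebra, and I claim $L=\FF(\omega)$ splits it. Over any field containing $\omega$, pick a nonzero $x$ in the ($2$-dimensional, hence rational and nonzero) $\omega$-eigenspace; from $n(x,1)=n(\tau x,\tau 1)=\omega\,n(x,1)$ we get $n(x,1)=0$, so the Cayley--Hamilton equation gives $x^{\cdot 2}=-n(x)1$, which lies both in the $\omega^2$-eigenspace and in $\FF 1\subseteq\Fix(\tau)$; these intersect in $0$, forcing $n(x)=0$. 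Thus the Cayley algebra is split over that field, and as its norm decomposes orthogonally as $n_{\cQ}$ perpendicular to the hyperbolic space spanned by the $\omega$- and $\omega^2$-eigenspaces (each totally singular of dimension $2$), Witt cancellation shows $n_{\cQ}$ is hyperbolic, so $\cQ$ is split there. Applying this over $L$ and using $\Fix(\tau)\otimes_\FF L=\Fix(\tau\otimes L)$, we get that $\cQ\otimes_\FF L$ is split, i.e. $L$ splits the division algebra $\cQ$, so $L$ embeds and yields $w$; the same eigenvector computation run over $\FF$ itself shows that if $\omega\in\FF$ then $\cQ$ is already split, so in the division case $\omega\notin\FF$ and automatically $w\notin\FF 1$. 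This gives case (2).

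It remains to prove the conjugacy criterion and its corollary. If $\tau'=\varphi\tau\varphi^{-1}$ with $\varphi\in\Aut(\cC,\cdot,n)$, then $\varphi\bigl(\Fix(\tau)\bigr)=\Fix(\tau')$, so the two quaternion subalgebras are isomorphic. Conversely, given an isomorphism $\Fix(\tau)\cong\Fix(\tau')$, I would extend it to an automorphism $\varphi$ of $(\cC,\cdot,n)$ by the Cayley--Dickson doubling process as in Lemma~\ref{le:useful}; replacing $\tau'$ by $\varphi^{-1}\tau'\varphi$ reduces to the situation where $\tau$ and $\tau'$ both fix one fixed quaternion subalgebra $\cQ$ pointwise. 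The pointwise stabilizer of $\cQ$ in $\Aut(\cC,\cdot,n)$ is the group $\SSL_1(\cQ)$ of norm-one elements of $\cQ$, and under this identification an order $3$ element corresponds to $a\in\cQ$ with $n(a)=1$, $a^{\cdot 3}=1$ and $a\neq 1$, i.e. $a^{\cdot 2}+a+1=0$; thus $\tau,\tau'$ correspond to elements $a,a'$ generating subalgebras isomorphic to $L$, which are conjugate in $\cQ^\times$ by the Skolem--Noether theorem, whence $\tau\sim\tau'$. Finally, if $\FF$ contains the cubic roots of $1$ the eigenvector argument above shows $\cC$ is split and $\cQ\cong M_2(\FF)$ for every such $\tau$; all these quaternion subalgebras being isomorphic, all such automorphisms are conjugate and $\cC_\tau$ is the split Okubo algebra. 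The main obstacle is the existence of $w$ in the division case, i.e. the splitting of $\cQ$ by $\FF(\omega)$, together with the identification of the pointwise stabilizer of $\cQ$ with $\SSL_1(\cQ)$ needed for the converse conjugacy statement.
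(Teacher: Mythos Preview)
Your proof is correct, but it diverges from the paper's in how the element $w\in\cQ\setminus\FF 1$ is produced in case~(2). The paper reads $w$ off directly from $\tau$: writing $\cC=\cQ\oplus\cQ\cdot u$ with $u\in\cQ^\perp$ nonisotropic, one has $\tau(u)=w\cdot u$ for some $w\in\cQ$, and $\tau^3=\id$ forces $w^{\cdot 3}=1$, whence $w^{\cdot 2}+w+1=0$ (and $w\notin\FF 1$ since $n(\lambda 1)=\lambda^2$ rules out scalar cube roots of~$1$ in $\SSL_1(\cQ)$). This is a two-line argument and, crucially, it hands you the very $w$ that drives the conjugacy proof. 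Your route---splitting into the split/division cases and, in the latter, using the $\omega$-eigenvector argument plus Witt cancellation to show $\FF(\omega)$ splits $\cQ$ and hence embeds---is valid and has some independent interest, but it is considerably longer and produces a $w$ unrelated to $\tau$. For the conjugacy statement both approaches ultimately rest on the same two facts: Skolem--Noether inside the quaternion algebra, and the extension of an automorphism of $\cQ$ to one of $\cC$ via the Cayley--Dickson doubling. The paper does this in one pass (extend $\FF 1+\FF w\to\FF 1+\FF w'$ to $\cQ\to\cQ'$ to $\cC$), while you first reduce to $\cQ=\cQ'$ and then invoke Skolem--Noether; your ``whence $\tau\sim\tau'$'' tacitly uses the Cayley--Dickson extension step, which deserves to be stated explicitly. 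Your identification of the pointwise stabilizer of $\cQ$ with $\SSL_1(\cQ)$ is correct and is essentially the content underlying the paper's Lemma~\ref{le:Stab_Q}.
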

\begin{proof} By \cite[Proposition 3.4 and Theorem 3.5]{EP96}, either $\Fix(\tau)$ is a quadratic \'etale subalgebra and $\cC_\tau$ is para-Cayley, or $\Fix(\tau)$ is a quaternion subalgebra.

In the first case, Theorem \ref{th:tau_w} and Remark \ref{re:tau_w} give us the desired result. In the second case, $\cQ=\Fix(\tau)$ is a quaternion subalgebra and $\cC=\cQ\oplus\cQ\cdot u$ for a nonisotropic element $u$ orthogonal to $\cQ$. Then $\tau(u)=w\cdot u$ for a norm $1$ element $w\in\cQ$ such that $w^{\cdot 3}=1\neq w$. Then $w^{\cdot 2}=\bar w\cdot w^{\cdot 3}=\bar w=n(1,w)1-w$ and, as in the proof of Proposition \ref{pr:order_3_Q}, we have $w^{\cdot 2}+w+1=0$.

If $\tau'$ is another such automorphism and the quaternion subalgebra $\cQ'=\Fix(\tau')$ is isomorphic to $\cQ$, then the orthogonal complements $\cQ^\perp$ and $(\cQ')^\perp$ are isometric, so there is an element $u'\in(\cQ')^\perp$ with $n(u')=n(u)$. As before, $\tau'(u')=w'\cdot u'$ with $w'\in\cQ'$ such that $(w')^{\cdot 2}+w'+1=0$. The isomorphism $\varphi:\FF 1\oplus\FF w\rightarrow \FF 1\oplus\FF w'$ that takes $w$ to $w'$ can be extended to an isomorphism $\varphi:\cQ\rightarrow \cQ'$, and then to an automorphism $\varphi$ of $(\cC,\cdot,n)$, also denoted by $\varphi$, with $\varphi(u)=u'$. It follows that $\tau'=\varphi\circ\tau\circ\varphi^{-1}$, as required.

If $\FF$ contains the cubic roots of $1$, then the subalgebra $\FF 1\oplus\FF w$ is isomorphic to the split quadratic algebra $\FF\times \FF$, so $(\cC,\cdot,n)$ is the split Cayley algebra, and the uniqueness up to conjugacy of $\tau$ shows that there is a canonical basis such that $\tau(u_i)=u_{i+1}$ (indices modulo $3$), so that $\cC_\tau$ is the split Okubo algebra in this case. If $\FF$ does not contain the cubic roots of $1$, this shows that $\cC_\tau$ is an Okubo algebra, as it is so over a field extension.
\end{proof}

\begin{example}\label{ex:real}
Let $(\cC,\cdot,n)$ be the split Cayley algebra over the field of real numbers $\RR$. Then $\cC$ contains subalgebras isomorphic to both the real quaternion division algebra $\HH$ and the split real quaternion algebra $M_2(\RR)$. Both of them contain copies of $\CC$ and hence contain an element $w\not\in\RR 1$ with $w^{\cdot 2}+w+1=0$. Therefore, there are two conjugacy classes of order $3$ automorphisms of $(\cC,\cdot,n)$ such that $\cC_\tau$ is an Okubo algebra.
\end{example}

The centralizers in the affine group scheme of automorphisms $\AAut(\cC,\cdot,n)$ of the automorphisms $\tau$ in Theorem \ref{th:char_not3} will be computed now. Because of Proposition \ref{pr:fix_cent}, these centralizers coincide with the stabilizers of the idempotent $1$ of the symmetric composition algebra (Petersson algebra) $\cC_\tau$.

Some preliminaries are needed first.

Let $\cK$ be a quadratic \'etale subalgebra of a Cayley algebra $(\cC,\cdot,n)$ over a field $\FF$. Then (see \cite{Jac58,Colour-Cayley}), the orthogonal subspace $\cW=\cK^\perp$ is a free left $\cK$-module of rank $3$ endowed with
\begin{itemize}
\item a hermitian nondegenerate form $\sigma:\cW\times\cW\rightarrow\cK$ (i.e., $\sigma$ is $\FF$-bilinear, $\sigma(a\cdot x,y)=a\cdot\sigma(x,y)$ and $\sigma(y,x)=\overline{\sigma(x,y)}$, for any $a\in\cK$ and $x,y\in\cW$, and $\sigma$ induces a $\cK$-linear isomorphism $\cW\rightarrow \Hom_\cK(\cW,\cK)$, $x\mapsto \sigma(.,x)$), and 
\item an anticommutative product $\cW\times\cW\rightarrow \cW$, $(x,y)\mapsto x\times y$, with $(a\cdot x)\times y=\bar a\cdot(x\times y)=x\times(a\cdot y)$ for any $a\in\cK$ and $x,y\in\cW$, 
\end{itemize}
such that, for any $x,y\in\cW$,
\begin{equation}\label{eq:sigma_x}
x\cdot y=-\sigma(x,y)+x\times y.
\end{equation}
The $\cK$-trilinear form $\Phi:\cW\times\cW\times\cW\rightarrow \cK$, given by
\begin{equation}\label{eq:Phi_sigma}
\Phi(x,y,z)=\sigma(x,y\times z),
\end{equation}
is alternating, and satisfies
\begin{equation}\label{eq:Phi_det}
n\bigl(\Phi(x_1,x_2,x_3)\bigr)=\det\Bigl(\sigma(x_i,x_j)\Bigr)
\end{equation}
for any $x_1,x_2,x_3\in\cW$.

Conversely, if $(\cK,\cdot)$ is a quadratic \'etale algebra with norm $n$ and standard conjugation $a\mapsto \bar a$, and $\cW$ is a free left $\cK$-module of rank $3$ endowed with:
\begin{itemize}
\item a nondegenerate hermitian form $\sigma:\cW\times\cW\rightarrow\cK$,
\item a $\cK$-trilinear alternating form $\Phi:\cW\times\cW\times\cW\rightarrow \cK$ satisfying \eqref{eq:Phi_det},
\end{itemize}
then the vector space direct sum $\cC=\cK\oplus\cW$ is a Cayley algebra, where the multiplication $\cdot$ and the norm $n$ are defined as follows:
\begin{itemize}
\item $\cK$ is a subalgebra of $\cC$ and for any $a\in\cK$ and $w\in\cW$, $a\cdot w$ is given by the product of the scalar $a$ on the element $w$ in the $\cK$-module $\cW$, while $w\cdot a\bydef \bar a\cdot w$.
\item $x\cdot y=-\sigma(x,y)+x\times y$ for any $x,y\in\cW$, where $x\times y$ is defined by the equation $\sigma(z,x\times y)=\Phi(z,x,y)$ for any $z\in\cW$.
\item $\cK$ and $\cW$ are orthogonal for $n$, the restriction of $n$ to $\cK$ is the generic norm of $\cK$, and $n(x)=\sigma(x,x)$ for any $x\in\cW$.
\end{itemize}

Assume now that we are in the situation of Theorem \ref{th:char_not3}.(1), so $\charac\FF\neq 3$, and $\tau\in\Aut(\cC,\cdot,n)$ is given by $\tau(x)=w\cdot x\cdot w^{\cdot 2}$, with $w\in\cC\setminus\FF 1$, $w^{\cdot 2}+w+1=0$. Then $\cK=\FF 1+\FF w$ is a quadratic \'etale subalgebra of $(\cC,\cdot,n)$. Hence $\cW=\cK^\perp$ is endowed with the hermitian form $\sigma$ and product $\times$ in \eqref{eq:sigma_x}.

\begin{lemma}\label{le:Stab_w}
Under the conditions above, $\SStab_{\AAut(\cC,\cdot,n)}(w)$ is naturally isomorphic to the special unitary group $\SSU(\cW,\sigma)$.
\end{lemma}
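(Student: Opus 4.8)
The plan is to identify each element of $\SStab_{\AAut(\cC,\cdot,n)}(w)$ with a $\cK$-linear isometry of $(\cW,\sigma)$ of determinant one, and conversely, working functorially so that the identification is one of affine group schemes. First I would observe that any $\varphi$ in the stabilizer fixes both $1$ and $w$, hence fixes $\cK=\FF 1+\FF w$ pointwise; since an automorphism of a Hurwitz algebra is an isometry of $n$, it preserves the orthogonal complement $\cW=\cK^\perp$. Moreover, fixing $\cK$ pointwise forces $\varphi|_\cW$ to be $\cK$-\emph{linear}: for $a\in\cK$ and $x\in\cW$ one has $\varphi(a\cdot x)=\varphi(a)\cdot\varphi(x)=a\cdot\varphi(x)$.

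Next I would read off from $x\cdot y=-\sigma(x,y)+x\times y$ that $\sigma(x,y)$ and $x\times y$ are exactly the $\cK$- and $\cW$-components of $x\cdot y$ relative to the decomposition $\cC=\cK\oplus\cW$. Since $\varphi$ respects the multiplication and this decomposition, comparing the two components of $\varphi(x\cdot y)=\varphi(x)\cdot\varphi(y)$ yields $\sigma(\varphi(x),\varphi(y))=\sigma(x,y)$ and $\varphi(x\times y)=\varphi(x)\times\varphi(y)$; that is, $\varphi|_\cW$ lies in the unitary group $\mathrm{U}(\cW,\sigma)$ and preserves the product $\times$.

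The key step is to convert preservation of $\times$ into the determinant-one condition. Using $\Phi(x,y,z)=\sigma(x,y\times z)$ together with the nondegeneracy of $\sigma$, I would check that for a $\sigma$-isometry $\varphi$ the preservation of $\times$ is equivalent to the preservation of $\Phi$ (in one direction substitute $\varphi$ into $\Phi$; in the other, recover $\varphi(x\times y)=\varphi(x)\times\varphi(y)$ from $\sigma(z,\cdot)$ being injective). But $\Phi$ is an alternating $\cK$-trilinear form on the rank-$3$ module $\cW$, so it generates $\bigwedge^3_\cK\cW^*$ and satisfies $\Phi(\varphi(x),\varphi(y),\varphi(z))=(\det_\cK\varphi)\,\Phi(x,y,z)$; hence $\varphi$ preserves $\Phi$ exactly when $\det_\cK\varphi=1$. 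That $\Phi\neq 0$, so this reasoning is not vacuous, is guaranteed by \eqref{eq:Phi_det}. Thus $\varphi|_\cW$ is a $\sigma$-isometry with $\det_\cK\varphi=1$, i.e.\ an element of $\SSU(\cW,\sigma)$. Conversely, given such a $\varphi$, I would extend it to $\psi=\id\oplus\,\varphi$ on $\cC=\cK\oplus\cW$ and verify, using the reconstruction of the multiplication from $(\cK,\cW,\sigma,\Phi)$ recalled before the lemma, that $\psi$ is an automorphism fixing $w$; the only nontrivial case, $x,y\in\cW$, is immediate from $\psi$ preserving both $\sigma$ and $\times$.

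Finally, all the ingredients—the decomposition $\cC=\cK\oplus\cW$, the forms $\sigma$ and $\Phi$, the product $\times$, and $\det_\cK$—base change along any $R\in\AlgF$, so the assignment $\varphi\mapsto\varphi|_\cW$ is natural and defines an isomorphism of affine group schemes $\SStab_{\AAut(\cC,\cdot,n)}(w)\xrightarrow{\ \sim\ }\SSU(\cW,\sigma)$. The main point to be careful about is the determinant bookkeeping in the third paragraph: one must confirm that the $\cK$-linear determinant of a $\sigma$-isometry automatically has norm $1$, so that the extra condition $\det_\cK\varphi=1$ genuinely cuts out the special unitary group inside the unitary group, and that preservation of $\Phi$ corresponds to $\det_\cK\varphi=1$ itself rather than to a power of it.
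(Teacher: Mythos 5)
Your proposal is correct and follows essentially the same route as the paper: restrict the automorphism to a $\cK$-linear $\sigma$-isometry of $\cW$, translate preservation of $\times$ into preservation of the alternating form $\Phi$ (hence $\det_\cK=1$), and reverse the construction, all functorially in $R\in\AlgF$. The only difference is that you spell out the scaling $\Phi(\varphi(x),\varphi(y),\varphi(z))=(\det_\cK\varphi)\,\Phi(x,y,z)$ explicitly where the paper cites Jacobson for the equivalence between preserving $\Phi$ and preserving $\times$.
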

\begin{proof}
Any $\cK$-linear map of $\cW$ of determinant $1$ preserves the alternating $\cK$-trilinear form $\Phi$ in \eqref{eq:Phi_sigma} and hence, if it preserves $\sigma$, it preserves also $\times$ (see \cite{Jac58} if $\charac\FF\neq 2$, but the arguments are valid too if $\charac\FF=2$), so it extends to an automorphism of $(\cC,\cdot,n)$ which restricts to the identity map on $\cK$. Conversely, any automorphism that fixes $w$ restricts to a $\cK$-linear map that preserves $\sigma$ and $\times$, and hence $\Phi$, so it lies in the special unitary group. Besides, all this is functorial, so the result is valid at the level of schemes.
\end{proof}

\begin{theorem}\label{th:tau_Q_u_w}
Let $(\cC,\cdot,n)$ be a Cayley algebra over a field $\FF$ of characteristic $\neq 3$, and let $w\in\cC\setminus \FF 1$ with $w^{\cdot 2}+w+1=0$. Let $\tau$ be the order $3$ automorphism given by $\tau(x)=w\cdot x\cdot w^{\cdot 2}$ for any $x\in\cC$. Then,
\[
\CCentr_{\AAut(\cC,\cdot,n)}(\tau)=\SStab_{\AAut(\cC,\cdot,n)}(w)\,\bigl(\simeq \SSU(\cW,\sigma)\bigr).
\]
\end{theorem}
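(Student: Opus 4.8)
The plan is to prove the two group schemes coincide as subfunctors of $\AAut(\cC,\cdot,n)$ by establishing the inclusion
\[
\CCentr_{\AAut(\cC,\cdot,n)}(\tau)=\SStab_{\AAut(\cC,\cdot,n)}(w),
\]
from which the isomorphism with $\SSU(\cW,\sigma)$ follows immediately by Lemma \ref{le:Stab_w}. Since everything is functorial, it suffices to argue over an arbitrary $R\in\AlgF$, comparing $\varphi\in\AAut(\cC,\cdot,n)(R)$ that commute with $\tau_R$ against those that fix $w$. First I would observe the easy inclusion: if $\varphi(w)=w$, then since $\tau$ is given by the explicit formula $\tau(x)=w\cdot x\cdot w^{\cdot 2}$ and $\varphi$ is an algebra automorphism, we get $\varphi\tau(x)=\varphi(w)\cdot\varphi(x)\cdot\varphi(w)^{\cdot 2}=w\cdot\varphi(x)\cdot w^{\cdot 2}=\tau\varphi(x)$, so $\varphi$ centralizes $\tau$. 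This gives $\SStab_{\AAut}(w)\subseteq\CCentr_{\AAut}(\tau)$ with essentially no work.

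For the reverse inclusion, the key point is to recover $w$ canonically from $\tau$. The most direct route is to use Proposition \ref{pr:fix_cent}, which identifies $\CCentr_{\AAut(\cC,\cdot,n)}(\tau)$ with the stabilizer of the idempotent $1$ in the symmetric composition algebra $\cC_\tau=(\cC,*,n)$. By Theorem \ref{th:tau_w}, $w$ is precisely the para-unit of $\cC_\tau$; and since $\cC_\tau$ is para-Cayley of dimension $8$, the para-unit is \emph{unique} (it is the distinguished idempotent lying in the commutative center $K(\cC_\tau,*)=\FF 1$, as recalled in the discussion following Definition \ref{df:Petersson}). Therefore any automorphism $\varphi$ of $(\cC_\tau,*,n)$ must fix $w$. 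Thus I would argue: if $\varphi$ centralizes $\tau$, then by \eqref{eq:Petersson} $\varphi$ is an automorphism of $\cC_\tau$, hence it permutes the idempotents of $\cC_\tau$; but the para-unit $w$ is the unique para-unit, so $\varphi(w)=w$. This yields $\CCentr_{\AAut}(\tau)\subseteq\SStab_{\AAut}(w)$, completing the inclusion in both directions.

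The main obstacle—if any—is ensuring the uniqueness of the para-unit argument is valid functorially, i.e. over every base $R$ and not merely over $\FF$ or a field. The cleanest way around this is to avoid reproving uniqueness at the scheme level and instead invoke Proposition \ref{pr:fix_cent}.(ii) directly: that proposition already establishes, at the level of group schemes, that $\CCentr_{\AAut(\cS,\cdot,n)}(\tau)=\SStab_{\AAut(\cS,*,n)}(e)$ for the idempotent $e$ giving rise to $\tau$. Here the relevant idempotent of $\cC_\tau$ is exactly $w$ (its para-unit), and the stabilizer of $w$ in $\AAut(\cC_\tau,*,n)=\AAut(\cC,\cdot,n)$ is the same as its stabilizer in $\AAut(\cC,\cdot,n)$ because these group schemes coincide (as noted after Definition \ref{df:Petersson}, the automorphism schemes of a para-Hurwitz algebra and its underlying Hurwitz algebra agree, and the same holds for the Petersson twist since $\tau$ and the multiplications determine one another). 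Combining this chain of identifications with Lemma \ref{le:Stab_w} gives the stated result. I expect the proof to be short, as the real content has been front-loaded into Theorem \ref{th:tau_w}, Proposition \ref{pr:fix_cent}, and Lemma \ref{le:Stab_w}.
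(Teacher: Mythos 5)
Your strategy for the hard inclusion is genuinely different from the paper's, and its core idea is sound: recover $w$ canonically from $\tau$ as the unique para-unit of $\cC_\tau$, so that any $\varphi$ commuting with $\tau$ (hence an automorphism of $\cC_\tau$ by \eqref{eq:Petersson}) must fix $w$. The paper instead recovers $w$ from $\Fix(\tau)=\FF 1+\FF w$: a point of the centralizer stabilizes this quadratic \'etale subalgebra, so locally $\varphi(w)\in\{w,w^{\cdot 2}\}$, and the identity $\varphi(w)\cdot\varphi(x)\cdot\varphi(w)^{\cdot 2}=w\cdot\varphi(x)\cdot w^{\cdot 2}$ forces $w^{-1}\cdot\varphi(w)$ into the commutative center $\FF 1$, whence $\varphi(w)=w$. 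Both arguments are short and lean on material already established; the paper's uses only the explicit formula for $\tau$, while yours routes through Theorem \ref{th:tau_w}.

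There is, however, a concrete error in the way you try to secure functoriality. Proposition \ref{pr:fix_cent}(ii) applies to the idempotent $e$ whose associated automorphism via \eqref{eq:e_tau} \emph{is} $\tau$; in $\cC_\tau$ that idempotent is $1$ (the unity of $(\cC,\cdot,n)$), not the para-unit $w$. For a para-unit the map $x\mapsto w*(w*x)$ is the \emph{identity} (compute $w*(w*x)=n(w,x)w-x*w=x$ using $x*w=n(w,x)w-x$), so invoking the proposition with $e=w$ yields $\AAut(\cC',\cdot',n)=\SStab_{\AAut(\cC_\tau,*,n)}(w)$ for the Hurwitz algebra $\cC'$ with unity $w$ --- a true statement, but one that says nothing about $\CCentr_{\AAut(\cC,\cdot,n)}(\tau)$. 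What the proposition actually gives is $\CCentr_{\AAut(\cC,\cdot,n)}(\tau)=\SStab_{\AAut(\cC_\tau,*,n)}(1)$, and to pass from there to the stabilizer of $w$ you still need the scheme-level uniqueness of the para-unit that you were hoping to avoid. Fortunately it is easy to supply: the commutative center of $\cC_\tau$ is the kernel of a linear map between finite-dimensional $\FF$-spaces, so it commutes with the (exact) base change $-\otimes_\FF R$ and equals $Rw$ over every $R\in\AlgF$; then $\varphi(w)=\lambda w$ with $\lambda^{2}=\lambda$ by idempotency and $\lambda^{2}=1$ since $\varphi$ preserves $n$, so $\lambda=1$. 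With that lemma inserted your argument closes; as written, the final paragraph does not.
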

\begin{proof}
Any $\varphi$ in $\CCentr_{\AAut(\cC,\cdot,n)}(\tau)$ stabilizes $\cK=\Fix(\tau)$ so (locally) we have either $\varphi(w)=w$ or $\varphi(w)=w^{\cdot 2}$, because the group scheme $\AAut(\cK,\cdot)$ is the constant group scheme $\mathsf{C}_2$ (cyclic group of order $2$). But $\varphi(w)\cdot\varphi(x)\cdot\varphi(w)^{\cdot 2}=w\cdot\varphi(x)\cdot w^{\cdot 2}$ for any $x\in\cC\otimes_\FF R$ (if $\varphi$ is a point over $R$), so $w^{-1}\cdot \varphi(w)$ lies in the commutative center of $(\cC,\cdot)$, which is $\FF 1$. Hence $\varphi(w)=w$.
\end{proof}

\smallskip

For automorphisms in Theorem \ref{th:char_not3}.(2), consider first a Cayley algebra $(\cC,\cdot,n)$ over an arbitrary field, and let $\cQ$ be a quaternion subalgebra. Then $\cC=\cQ\oplus\cQ\cdot u$ for some nonisotropic element $u$ orthogonal to $\cQ$. Consider the group scheme $\SSL_1(\cQ)$, whose set of points over an algebra $R\in\AlgF$ is $\SSL_1(\cQ)(R)=\{q\in \cQ\otimes_\FF R: n(q)=1\}$. The scheme $\boldmu_2$ of square roots of unity embeds naturally in $\SSL_1(\cQ)$ as the scalar elements.

\begin{lemma}\label{le:Stab_Q}
Under the conditions above, the stabilizer $\SStab_{\AAut(\cC,\cdot,n)}(\cQ)$ is isomorphic to $\SSL_1(\cQ)\times\SSL_1(\cQ)/\boldmu_2$ (where $\boldmu_2$ embeds diagonally on the product of the two copies of $\SSL_1(\cQ)$).
\end{lemma}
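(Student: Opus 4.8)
The plan is to write down an explicit homomorphism of group schemes from $\SSL_1(\cQ)\times\SSL_1(\cQ)$ onto $\SStab_{\AAut(\cC,\cdot,n)}(\cQ)$ and to identify its kernel with the diagonal copy of $\boldmu_2$. Fix the Cayley--Dickson presentation $\cC=\cQ\oplus\cQ\cdot u$ of the statement, so that $\cW\bydef\cQ\cdot u=\cQ^\perp$ is a free left $\cQ$-module of rank one, and recall (as in \cite[Cor.~1.7.3]{SV}) one convenient form of the resulting multiplication rules: the product on $\cQ$ is the given one, $x\cdot(y\cdot u)=(y\cdot x)\cdot u$ and $(y\cdot u)\cdot x=(y\cdot\bar x)\cdot u$ for $x,y\in\cQ$, and $(y\cdot u)\cdot(z\cdot u)=\mu\,\bar z\cdot y$ with $\mu=-n(u)\in\FF^\times$.

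For a pair $(q_1,q_2)$ with $n(q_1)=n(q_2)=1$ I would define $\varphi_{(q_1,q_2)}$ to be the linear map acting on $\cQ$ as the inner automorphism $x\mapsto q_2\cdot x\cdot q_2^{-1}$ and on $\cW$ by $y\cdot u\mapsto(q_1\cdot y\cdot q_2^{-1})\cdot u$. First I would check that $\varphi_{(q_1,q_2)}$ is an automorphism of $(\cC,\cdot,n)$: it clearly preserves $n$ and the product on $\cQ$, while for the three remaining types of products one uses $\bar q_i=q_i^{-1}$ (because $n(q_i)=1$) together with the fact that conjugation reverses products. For instance, on $\cW\cdot\cW$ one gets $\mu\,\overline{q_1 z q_2^{-1}}\,(q_1 y q_2^{-1})=\mu\,q_2\bar z q_1^{-1}q_1 y q_2^{-1}=q_2(\mu\bar z y)q_2^{-1}$, which is exactly $\varphi_{(q_1,q_2)}$ applied to $(y\cdot u)\cdot(z\cdot u)$; the mixed products $\cQ\cdot\cW$ and $\cW\cdot\cQ$ are checked the same way. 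A direct computation then gives $\varphi_{(q_1,q_2)}\circ\varphi_{(q_1',q_2')}=\varphi_{(q_1 q_1',\,q_2 q_2')}$, so $(q_1,q_2)\mapsto\varphi_{(q_1,q_2)}$ is a homomorphism $\Psi\colon\SSL_1(\cQ)\times\SSL_1(\cQ)\to\AAut(\cC,\cdot,n)$; since each $\varphi_{(q_1,q_2)}$ fixes $\cQ$ setwise, its image lies in $\SStab_{\AAut(\cC,\cdot,n)}(\cQ)$. All of this is functorial in $R\in\AlgF$, so $\Psi$ is a morphism of group schemes.

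Next I would compute $\ker\Psi$: the identity $\varphi_{(q_1,q_2)}=\id$ forces $q_2$ to be central, hence $q_2\in\boldmu_2$, and then $q_1\cdot y\cdot q_2^{-1}=y$ for all $y$ forces $q_1=q_2$; thus $\ker\Psi=\{(q,q):q\in\boldmu_2\}$ is the diagonal $\boldmu_2$. For surjectivity onto the stabilizer, any $\varphi\in\SStab_{\AAut(\cC,\cdot,n)}(\cQ)$ restricts to an automorphism of $\cQ$ and preserves $\cQ^\perp=\cW$; it therefore sends $u$ to some $c\cdot u$ with $n(c)=1$, and by Skolem--Noether its restriction to $\cQ$ is $x\mapsto q_2\cdot x\cdot q_2^{-1}$ with $n(q_2)=1$, so that setting $q_1=c\cdot q_2$ yields $\varphi=\varphi_{(q_1,q_2)}$. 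Hence $\Psi$ induces an isomorphism $\bigl(\SSL_1(\cQ)\times\SSL_1(\cQ)\bigr)/\boldmu_2\xrightarrow{\ \sim\ }\SStab_{\AAut(\cC,\cdot,n)}(\cQ)$.

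The main obstacle is to make this surjectivity genuinely functorial rather than merely valid on points over a field: Skolem--Noether recovers $q_2$ only up to the centre and, over a non-split $R$, only after a faithfully flat extension, so the naive pointwise inverse of $\Psi$ need not exist over $R$ itself. I would circumvent this by phrasing surjectivity sheaf-theoretically through the standard identification $\AAut(\cQ,\cdot,n)\simeq\SSL_1(\cQ)/\boldmu_2$: restriction to $\cQ$ yields an exact sequence $1\to\SSL_1(\cQ)\to\SStab_{\AAut(\cC,\cdot,n)}(\cQ)\to\AAut(\cQ,\cdot,n)\to1$, whose kernel (the automorphisms trivial on $\cQ$) is precisely the subgroup $\{\varphi_{(q_1,1)}\}\simeq\SSL_1(\cQ)$, and $\Psi$ matches this sequence with the analogous presentation of $\bigl(\SSL_1(\cQ)\times\SSL_1(\cQ)\bigr)/\boldmu_2$, forcing the induced map $\bar\Psi$ to be an isomorphism of group schemes.
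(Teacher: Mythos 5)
Your construction is sound and, up to repackaging, identical to the paper's: your $\varphi_{(q_1,q_2)}$ is the composite $\psi_{q_2}\circ\varphi_{q_1}$ of the paper's two commuting families, and your kernel computation matches the paper's. Where you genuinely diverge is in proving that the induced map $\bigl(\SSL_1(\cQ)\times\SSL_1(\cQ)\bigr)/\boldmu_2\rightarrow\SStab_{\AAut(\cC,\cdot,n)}(\cQ)$ is an isomorphism of group schemes. The paper shows that $\Phi$ is a quotient map by combining surjectivity over $\bar\FF$ with \emph{smoothness} of the stabilizer, which it verifies by computing that the Lie algebra $\{\delta\in\Der(\cC,\cdot,n):\delta(\cQ)\subseteq\cQ\}$ has dimension $6$; this buys, as a byproduct, the smoothness statement itself, which is thematically relevant given the paper's later concern with non-smooth centralizers in characteristic $3$. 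You instead compare two short exact sequences of fppf sheaves via the identification $\AAut(\cQ,\cdot,n)\simeq\SSL_1(\cQ)/\boldmu_2$ and a five-lemma argument; this avoids the Lie-algebra computation and the appeal to the general ``surjective onto a smooth scheme on $\bar\FF$-points implies quotient'' fact, at the cost of needing exactness (in particular fppf-surjectivity of the restriction map $\SStab(\cQ)\rightarrow\AAut(\cQ,\cdot,n)$, which does follow since every automorphism of $\cQ$ is fppf-locally $\mathrm{Int}(q)$ with $n(q)=1$ and such automorphisms visibly lift). One small correction to your intermediate discussion: the ``naive'' pointwise surjectivity already fails over the base field, not merely over non-split $R$, because an inner automorphism $\mathrm{Int}(a)$ of $\cQ$ admits a representative of norm $1$ over $\FF$ only when $n(a)$ is a square (e.g.\ $\mathrm{Int}(\mathrm{diag}(2,1))$ on $M_2(\QQ)$ has none); this is harmless for your final sheaf-theoretic argument, which only uses local liftability, but the phrase ``merely valid on points over a field'' overstates what the naive argument gives.
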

\begin{proof}
For any $q\in\cQ$ of norm $1$, the maps $\psi_q,\varphi_q:\cC\rightarrow \cC$ given by
\[
\begin{aligned}
\psi_q(x)&=q\cdot x\cdot q^{-1}=q\cdot x\cdot \bar q,&\quad \psi_q(x\cdot u)&=(x\cdot q^{-1})\cdot u,\\
\varphi_q(x)&=x,&\varphi_q(x\cdot u)&=(q\cdot x)\cdot u,
\end{aligned}
\]
for $x\in\cQ$, are automorphisms of $(\cC,\cdot,n)$. The same happens for $\cC\otimes_\FF R$ and $q\in\cQ\otimes_\FF R$ of norm $1$, for any algebra $R\in\AlgF$.

Moreover, $\psi_q$ and $\varphi_p$ commute for any $q,p\in\SSL_1(\cQ)$. Therefore, we have the natural homomorphism
\begin{equation}\label{eq:Phi_ssl1Q}
\begin{split}
\Phi:\SSL_1(\cQ)\times\SSL_1(\cQ)&\longrightarrow \SStab_{\AAut(\cC,\cdot,n)}(\cQ)\\
(q,p)\quad&\mapsto\quad \psi_q\circ\varphi_p.
\end{split}
\end{equation}
Its kernel consists of the pairs $(q,p)$ such that $\psi_q\circ\psi_p=\id$, and this forces $\psi_q\vert_\cQ=\id$, so $q\in\boldmu_2$. Then $u=\psi_q\circ\varphi_p(u)=(p\cdot q^{-1})\cdot u$, so $p=q$. Thus the kernel is $\boldmu_2$ embedded diagonally. This shows, in particular, that the dimension of $\SStab_{\AAut(\cC,\cdot,n)}(\cQ)$ is at least $6$.

Finally, $\Phi$ is a quotient map as $\Phi_{\bar\FF}$ is surjective (this follows easily using, for instance, \cite[\S2.1]{SV}) and $\SStab_{\AAut(\cC,\cdot,n)}(\cQ)$ is smooth, because its Lie algebra $\frs=\{\delta\in\Der(\cC,\cdot,n): \delta(\cQ)\subseteq \cQ\}$ has dimension $6$. To check this, consider the homomorphism
\[
\begin{split}
\frs&\longrightarrow \Der(\cQ,\cdot)\simeq \ad(\cQ)\simeq \cQ/\FF 1\\
\delta &\mapsto\quad \delta\vert_{\cQ},
\end{split}
\]
whose kernel consists of those derivations $\delta\in\Der(\cC,\cdot,n)$ such that $\delta(\cQ)=0$. But any such derivation is determined by $\delta(u)$, which belongs to $\{x\in\cQ\cdot u : n(x,u)=0\}$, whose dimension is $3$.
\end{proof}

\begin{remark}\label{re:Levi}
If $\{e_1,e_2,u_1,u_2,u_3,v_1,v_2,v_3\}$ is a canonical basis of the split Cayley algebra $(\cC,\cdot,n)$ over an arbitrary field $\FF$, let $\cQ$ be the quaternion subalgebra spanned by $e_1$, $e_2$, $u_1$, and $v_1$. The group $\SSL_2\simeq\SSL_1(\cQ)$ embeds in $\AAut(\cC,\cdot,n)$ both by means of $q\mapsto \psi_q$ and by $q\mapsto \varphi_q$. With the notation in Section \ref{se:Zorn_autos}, the image is the derived subgroup of the Levi subgroup $\bfL_{\{\veps_1\}}$ in the first case, and of $\bfL_{\{\veps_2-\veps_3\}}$ in the second case.
\end{remark}

\begin{theorem}\label{th:Stab_tau_wxw2}
Let $(\cC,\cdot,n)$ be a Cayley algebra over a field $\FF$ of characteristic $\neq 3$, and let $\tau$ be an order $3$ automorphism of $(\cC,\cdot,n)$ that fixes elementwise a quaternion subalgebra $\cQ$. Let $u\in\cQ^\perp$ be a nonisotropic element and let $w\in\cQ\setminus\FF 1$ with $w^{2}+w+1=0$ such that $\tau(u)=w\cdot u$. Denote by $\cK$ the quadratic \'etale subalgebra $\cK=\FF 1+\FF w$. Then the centralizer $\CCentr_{\AAut(\cC,\cdot,n)}(\tau)$ is isomorphic to $\SSL_1(\cQ)\times\SSL_1(\cK)/\boldmu_2$.
\end{theorem}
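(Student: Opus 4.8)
The plan is to realize $\tau$ inside the stabilizer $\bfH\bydef\SStab_{\AAut(\cC,\cdot,n)}(\cQ)$ computed in Lemma~\ref{le:Stab_Q} and then to determine its centralizer there. First I would check that $\tau$ coincides with the automorphism $\varphi_w=\Phi(1,w)$, where $\Phi$ is the homomorphism of \eqref{eq:Phi_ssl1Q}: both $\tau$ and $\varphi_w$ fix $\cQ$ elementwise, and an automorphism fixing $\cQ$ elementwise is determined by its value on $u$, since $\cC=\cQ\oplus\cQ\cdot u$ and $\varphi(x\cdot u)=x\cdot\varphi(u)$ for $x\in\cQ$; as $\tau(u)=w\cdot u=\varphi_w(u)$, we get $\tau=\varphi_w$. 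Next I would verify that $\Fix(\tau)=\cQ$: on $\cQ^\perp=\cQ\cdot u$ the operator $\tau-\id$ sends $x\cdot u$ to $\bigl((w-1)\cdot x\bigr)\cdot u$, and $w-1$ is invertible in $\cQ$ because $n(w-1)=n(w)-n(w,1)+n(1)=3\neq 0$ in characteristic $\neq 3$; hence $\tau-\id$ is bijective on $\cQ^\perp$ and $\ker(\tau-\id)=\cQ$. This computation is valid over any $R\in\AlgF$, so any automorphism commuting with $\tau$ preserves $\cQ\otimes_\FF R$, yielding the functorial inclusion $\CCentr_{\AAut(\cC,\cdot,n)}(\tau)\subseteq\bfH$ together with the equality $\CCentr_{\AAut(\cC,\cdot,n)}(\tau)=\CCentr_{\bfH}(\tau)$.

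Next I would compute the centralizer of $\tau=\Phi(1,w)$ inside $\bfH$ by pulling back along the quotient map $\Phi\colon\SSL_1(\cQ)\times\SSL_1(\cQ)\to\bfH$ of Lemma~\ref{le:Stab_Q}, whose kernel is $\boldmu_2$ embedded diagonally as the scalars. Since $\Phi$ is a group homomorphism, for a point $(q,p)$ over $R$ the commutator is
\[
\Phi(q,p)\,\Phi(1,w)\,\Phi(q,p)^{-1}\,\Phi(1,w)^{-1}=\Phi\bigl(1,\,p\cdot w\cdot p^{-1}\cdot w^{-1}\bigr),
\]
which lies in $\ker\Phi=\boldmu_2$ if and only if $p\cdot w\cdot p^{-1}\cdot w^{-1}=1$, that is (the first coordinate being already trivial), if and only if $p$ commutes with $w$ in $\cQ$. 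Therefore the preimage under $\Phi$ of $\CCentr_{\bfH}(\tau)$ is $\SSL_1(\cQ)\times\CCentr_{\SSL_1(\cQ)}(w)$, where $\CCentr_{\SSL_1(\cQ)}(w)=\{p\in\SSL_1(\cQ): p\cdot w=w\cdot p\}$.

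Finally I would identify this centralizer of $w$. Since $w\notin\FF 1$ generates the quadratic \'etale subalgebra $\cK=\FF 1+\FF w$, the commutant of $w$ in the quaternion algebra $\cQ$ is exactly $\cK$; and because the condition $p\cdot w=w\cdot p$ is $\FF$-linear in $p$ with solution space $\cK$, which over the field $\FF$ is a direct summand of $\cQ$, the kernel of $\ad_w$ commutes with base change, so $\CCentr_{\SSL_1(\cQ)}(w)=\SSL_1(\cK)$ as group schemes (the norm-one subgroup scheme of $\cK$). Thus $\Phi^{-1}\bigl(\CCentr_{\bfH}(\tau)\bigr)=\SSL_1(\cQ)\times\SSL_1(\cK)$, and since the diagonal scalars $\boldmu_2$ lie in $\SSL_1(\cK)$ as well, we have $\ker\Phi\subseteq\SSL_1(\cQ)\times\SSL_1(\cK)$ and hence $\CCentr_{\AAut(\cC,\cdot,n)}(\tau)=\Phi\bigl(\SSL_1(\cQ)\times\SSL_1(\cK)\bigr)\cong\bigl(\SSL_1(\cQ)\times\SSL_1(\cK)\bigr)/\boldmu_2$, as claimed. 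The main obstacle I anticipate is keeping every step functorial, at the level of group schemes rather than rational points: concretely, justifying that both $\Fix(\tau)$ and the commutant of $w$ behave well under base change — which here reduces to working over the field $\FF$, where the relevant kernels split off as direct summands — and checking that $\Phi$ restricts to a quotient map onto $\CCentr_{\bfH}(\tau)$ with the same finite kernel $\boldmu_2$.
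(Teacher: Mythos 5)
Your proposal is correct and follows essentially the same route as the paper: reduce to the stabilizer of $\cQ$ via $\Fix(\tau)=\cQ$, then use the covering homomorphism $\Phi$ of Lemma \ref{le:Stab_Q} and observe that $\varphi_p$ commutes with $\tau$ exactly when $p$ lies in $\SSL_1(\cK)$. Your explicit identification $\tau=\Phi(1,w)$ and the commutator computation in the covering group are just a slightly more formal packaging of the paper's direct check that $\psi_q\circ\tau=\tau\circ\psi_q$ and $\varphi_p\circ\tau=\tau\circ\varphi_p$ iff $p\cdot w=w\cdot p$, with the added benefit that pulling back the centralizer along the quotient map $\Phi$ lets you avoid re-verifying smoothness of the centralizer.
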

\begin{proof}
As $\Fix(\tau)=\cQ$, it follows that the centralizer $\CCentr_{\AAut(\cC,\cdot,n)}(\tau)$ is contained in $\SStab_{\AAut(\cC,\cdot,n)}(\cQ)$. The homomorphism $\Phi$ in \eqref{eq:Phi_ssl1Q} restricts to a homomorphism $\SSL_1(\cQ)\times\SSL_1(\cK)\rightarrow \CCentr_{\AAut(\cC,\cdot,n)}(\tau)$. (Observe  that $\psi_q\circ\tau=\tau\circ\psi_q$ for any $q\in\SSL_1(\cQ)$, but $\varphi_p\circ\tau=\tau\circ\varphi_p$ if and only if $p\cdot w=w\cdot p$, if and only if $p\in\cK$ (and the same over any $R$). Now the same arguments as in the proof of Lemma \ref{le:Stab_Q} apply.
\end{proof}

\smallskip

Due to the relationship between idempotents in symmetric composition algebras and automorphisms of Cayley algebras of order $1$ or $3$, we immediately get the next consequence of Theorem \ref{th:char_not3}. Here $\cK$ will denote the quadratic \'etale algebra $\FF[X]/(X^2+X+1)$, and $\overline{\cK}$ the corresponding para-quadratic algebra.

\begin{corollary}\label{co:idempotents_char_not3}
An  Okubo algebra $(\cO,*,n)$ over a field $\FF$ of characteristic $\neq 3$ contains an idempotent if and only if it contains a subalgebra isomorphic to $\overline{\cK}$. In this case, the centralizer of any idempotent $e$ is a para-quaternion subalgebra containing a subalgebra isomorphic to $\wb{\cK}$, and two idempotents are conjugate if and only if the corresponding quaternion algebras are isomorphic.

In particular, if $\FF$ contains the cubic roots of $1$, then $(\cO,*,n)$ contains a unique conjugacy class of idempotents.
\end{corollary}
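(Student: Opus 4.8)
The plan is to translate everything about idempotents of $(\cO,*,n)$ into the language of order $3$ automorphisms of the underlying Cayley algebra, and then to invoke Theorem \ref{th:char_not3}(2). First I would fix an idempotent $e$ and run the machinery recalled before Proposition \ref{pr:fix_cent}: by \eqref{eq:e_tau} and \eqref{eq:exye}, $e$ produces the order $3$ map $\tau_e\colon x\mapsto e*(e*x)$ and a Hurwitz multiplication $x\cdot_e y=(e*x)*(y*e)$ making $(\cO,\cdot_e,n)$ a Cayley algebra with unit $e$, and by \eqref{eq:Petersson} one has $(\cO,*,n)=(\cO,\cdot_e,n)_{\tau_e}$. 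Since $(\cO,*,n)$ is Okubo it is not para-Cayley (Theorem \ref{th:paraH_or_Okubo}), so $e$ is not a para-unit and $\tau_e\neq\id$, i.e.\ $\tau_e$ has order $3$; as the resulting Petersson algebra is Okubo we are necessarily in case (2) of Theorem \ref{th:char_not3}. Hence $\Fix(\tau_e)$ is a quaternion subalgebra $\cQ_e$ of $(\cO,\cdot_e,n)$ which, by Theorem \ref{th:char_not3}(2), contains a quadratic \'etale subalgebra isomorphic to $\cK=\FF[X]/(X^2+X+1)$. By Proposition \ref{pr:fix_cent}(i), $\Fix(\tau_e)=\Centr_{(\cO,*,n)}(e)$, and for $x,y\in\cQ_e$ one computes $x*y=\tau_e(\bar x)\cdot_e\tau_e^2(\bar y)=\bar x\cdot_e\bar y$, so the centralizer, carrying the product $*$, is exactly the para-quaternion algebra $(\cQ_e,\bullet)$, and it contains the para-quadratic subalgebra attached to $\cK$, i.e.\ a copy of $\overline{\cK}$. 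This at once establishes the description of the centralizer and the ``only if'' half of the first assertion.

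For the ``if'' half I would observe that a copy of $\overline{\cK}$ already carries idempotents: by Remark \ref{re:dim_2} the para-quadratic algebra $\overline{\cK}$ has idempotents (the images of $1,w,w^{\cdot 2}$), and since the multiplication of a subalgebra is the restriction of $*$, these map to genuine idempotents of $(\cO,*,n)$.

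The conjugacy criterion is where the real work lies. The forward implication is immediate: a $*$-automorphism $\varphi$ with $\varphi(e)=e'$ carries $\Centr(e)$ onto $\Centr(e')$, giving an isomorphism of para-quaternion algebras $(\cQ_e,\bullet)\cong(\cQ_{e'},\bullet)$; since in dimension $4$ the para-unit is unique and recovers the Hurwitz product via $x\cdot y=\bar x\bullet\bar y$ (the discussion after Definition \ref{df:Petersson}), this yields $\cQ_e\cong\cQ_{e'}$ as quaternion algebras. For the converse, assume $\cQ_e\cong\cQ_{e'}$. The Cayley algebras $(\cO,\cdot_e,n)$ and $(\cO,\cdot_{e'},n)$ have the same norm $n$, hence are isomorphic (an octonion algebra is determined up to isomorphism by its norm form); I would pick a Cayley isomorphism $\psi$, which automatically satisfies $\psi(e)=e'$. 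Then $\psi\tau_e\psi^{-1}$ is an order $3$ automorphism of $(\cO,\cdot_{e'},n)$ whose fixed quaternion subalgebra $\psi(\cQ_e)$ is isomorphic to $\cQ_{e'}=\Fix(\tau_{e'})$, so by the conjugacy clause of Theorem \ref{th:char_not3}(2) there is $\eta\in\Aut(\cO,\cdot_{e'},n)$ with $\eta(\psi\tau_e\psi^{-1})\eta^{-1}=\tau_{e'}$. Setting $\varphi=\eta\psi$, I would check via \eqref{eq:Petersson}, using that $\varphi$ is a Cayley isomorphism $(\cO,\cdot_e,n)\to(\cO,\cdot_{e'},n)$ (hence preserves conjugation) intertwining $\tau_e$ with $\tau_{e'}$, that $\varphi\in\Aut(\cO,*,n)$; since $\varphi(e)=\eta(e')=e'$, this exhibits the conjugacy. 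The main obstacle I anticipate is exactly this last step: reconciling the two distinct Cayley products $\cdot_e$ and $\cdot_{e'}$ carried by the same space so that Theorem \ref{th:char_not3}(2), which is phrased inside a single Cayley algebra, becomes applicable, and then confirming that the resulting intertwiner descends to a genuine $*$-automorphism.

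Finally, for the ``in particular'', when $\FF$ contains the cube roots of $1$ the polynomial $X^2+X+1$ splits, so $\cK\cong\FF\times\FF$; any quaternion algebra $\cQ_e$ then contains this split \'etale subalgebra, hence a zero divisor, and is therefore split, $\cQ_e\cong M_2(\FF)$. All the quaternion algebras attached to idempotents being isomorphic, the conjugacy criterion forces any two idempotents to be conjugate, leaving a single conjugacy class.
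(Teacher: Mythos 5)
Your proposal is correct and follows essentially the same route as the paper: pass from the idempotent $e$ to the order $3$ automorphism $\tau_e$ and the Cayley product $x\cdot_e y=(e*x)*(y*e)$, apply Theorem \ref{th:char_not3}(2) to identify the centralizer with the fixed quaternion subalgebra and to settle conjugacy, and transfer the intertwining isomorphism back to a $*$-automorphism via \eqref{eq:Petersson}. You are in fact slightly more explicit than the paper on the ``if'' direction and on reconciling the two Cayley products $\cdot_e$ and $\cdot_{e'}$ before invoking the conjugacy clause, but the underlying argument is the same.
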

\begin{proof}
Let $e$ be an idempotent of the Okubo algebra $(\cO,*,n)$, and let $\tau_e\in\Aut(\cO,*,n)$ be the order $3$ automorphism given by $\tau_e(x)=e*(e*x)$ for any $x\in\cO$ as in \eqref{eq:e_tau}. Consider the Cayley algebra $(\cO,\cdot,n)$, with unity $e$, where $x\cdot y=(e*x)*(y*e)$, as in \eqref{eq:exye}. Then $\tau_e$ is an automorphism of $(\cO,\cdot,n)$ too. By Theorem \ref{th:char_not3}, $\Fix(\tau_e)=\Centr_{(\cO,*,n)}(e)$ is a quaternion subalgebra of $(\cO,\cdot,n)$ (and a para-quaternion subalgebra of $(\cO,*,n)$) containing a subalgebra isomorphic to $\cK$, so that $(\cO,*,n)$ contains a subalgebra isomorphic to $\overline{\cK}$. Moreover, if $f$ is another idempotent, $\tau_f$ the corresponding order $3$ automorphism, and $(\cO,\diamond,n)$ the associated Cayley algebra ($x\diamond y=(f*x)*(y*f)$), then both Cayley algebras $(\cO,\cdot,n)$ and $(\cO,\diamond,n)$ are isomorphic, as their norms are isometric. If the para-quaternion algebras $\Centr_{(\cO,*,n)}(e)$ and $\Centr_{(\cO,*,n)}(f)$ are isomorphic, then by Theorem \ref{th:char_not3} there is an isomorphism $\varphi:(\cO,\cdot,n)\rightarrow (\cO,\diamond,n)$ such that $\varphi\circ\tau_e=\tau_f\circ \varphi$. Note that $\varphi(e)=f$ because $e$ (respectively $f$) is the unity of $(\cO,\cdot,n)$ (resp., of $(\cO,\diamond,n)$). Then, for any $x,y\in\cO$,
\[
\varphi(x*y)=\varphi\bigl(\tau_e(\bar x)\cdot\tau_e^2(\bar y)\bigr)
 =\tau_f(\overline{\varphi(x)})\diamond\tau_f^2(\overline{\varphi(y)})=\varphi(x)*\varphi(y), 
\]
and $\varphi\in \Aut(\cO,*,n)$ too, with $\varphi(e)=f$. 
\end{proof}

Up to isomorphism there is a unique Cayley algebra with isotropic norm: the Zorn matrix algebra (or split Cayley algebra). However, the split Okubo algebra (Definition \ref{df:Okubo}) is not characterized by its norm being isotropic. If the characteristic of $\FF$ is not $3$ and $\FF$ contains the cubic roots of $1$, or if the characteristic of $\FF$ is $3$, then the norm of any Okubo algebra is isotropic (see \cite[Proposition 7.3]{EM93} and \cite{Eld97}).

\begin{theorem}\label{th:O_split_e}
Let $(\cO,*,n)$ be an Okubo algebra over a field $\FF$ of characteristic $\neq 3$. Then $(\cO,*,n)$ is split (i.e., isomorphic to the split Okubo algebra in Definition \ref{df:Okubo}) if and only if $n$ is isotropic and $(\cO,*,n)$ contains an idempotent.
\end{theorem}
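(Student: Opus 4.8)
The plan is to prove the two implications separately; the `only if' direction is essentially free, while the `if' direction carries all the content.

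For the `only if' direction I would argue directly from the definition. If $(\cO,*,n)$ is split, then $(\cO,*,n)=\cC_{\tau_{st}}$ for the Zorn matrix algebra $(\cC,\cdot,n)$ and the automorphism $\tau_{st}$ of \eqref{eq:tau_st}. The norm $n$ is then the norm of the split Cayley algebra, which is isotropic; and the unity $1$ of $\cC$ is an idempotent of $\cC_{\tau_{st}}$, since $1*1=\tau_{st}(\bar 1)\cdot\tau_{st}^2(\bar 1)=1\cdot 1=1$. (More precisely, using the table in Example~\ref{ex:Zorn} one checks that, when $\charac\FF\neq 3$, $\Fix(\tau_{st})$ is the \emph{split} quaternion subalgebra spanned by $e_1,e_2,u_1+u_2+u_3,v_1+v_2+v_3$, a fact I will reuse below.) Both properties are isomorphism invariant, so this direction is immediate.

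For the `if' direction, suppose $n$ is isotropic and $e$ is an idempotent. Via \eqref{eq:exye} and \eqref{eq:Petersson} the product $x\cdot y=(e*x)*(y*e)$ makes $(\cO,\cdot,n)$ a Hurwitz algebra with unity $e$, and $\cO=\cC_{\tau_e}$ for the order $3$ automorphism $\tau_e\colon x\mapsto e*(e*x)$ of \eqref{eq:e_tau}; here $\tau_e$ has order exactly $3$, for otherwise $\cC_{\id}$ would make $\cO$ para-Hurwitz, contradicting that $\cO$ is an Okubo algebra (Theorem~\ref{th:paraH_or_Okubo}). Since $n$ is isotropic, $(\cO,\cdot,n)$ is the split Cayley algebra, and because $\cC_{\tau_e}=\cO$ is Okubo (not para-Cayley) we are in case (2) of Theorem~\ref{th:char_not3}: by Proposition~\ref{pr:fix_cent}, $\cQ':=\Fix(\tau_e)=\Centr_{(\cO,*,n)}(e)$ is a quaternion subalgebra containing an element $w$ with $w^{\cdot 2}+w+1=0$, and I set $\cK=\FF 1+\FF w$. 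If $\cQ'$ is \emph{split}, I am done at once: then $\tau_e$ and $\tau_{st}$ both fix quaternion subalgebras isomorphic to $M_2(\FF)$, so by Theorem~\ref{th:char_not3}.(2) they are conjugate in $\Aut(\cC,\cdot,n)$, and conjugate automorphisms give isomorphic Petersson algebras, whence $\cO=\cC_{\tau_e}\cong\cC_{\tau_{st}}$ is split.

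The genuine difficulty is the case in which $\cQ'$ is a division algebra; this really occurs (e.g.\ $\cQ'\cong\HH$ inside the split Cayley algebra over $\RR$), so one cannot simply conjugate $\tau_e$ to $\tau_{st}$. The strategy is instead to locate a \emph{different} idempotent $f$ of $(\cO,*,n)$ whose centralizer $\Fix(\tau_f)$ is the split quaternion algebra; granting this, Theorem~\ref{th:char_not3}.(2) again identifies $\tau_f$ with $\tau_{st}$ up to conjugacy and gives $\cO=\cC_{\tau_f}\cong\cC_{\tau_{st}}$. To hunt for $f$ I would use isotropy: writing $(\cO,\cdot,n)=\cK\oplus\cW$ with $\cW=\cK^\perp$ the rank $3$ hermitian $\cK$-module carrying the form $\sigma$ of \eqref{eq:sigma_x}, the hypothesis that $n$ is isotropic forces $\sigma$ to be isotropic, hence universal, so one may choose $z\in\cW$ with $\sigma(z,z)$ a norm from $\cK$; the subalgebra $\cK\oplus\cK z$ is then a \emph{split} quaternion subalgebra through $\cK$. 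The main obstacle, and where I expect to spend the effort, is exactly the passage from ``a split quaternion subalgebra of $(\cO,\cdot,n)$ through $\cK$'' to ``a split $\Fix(\tau_f)$ for an idempotent $f$ of the \emph{same} algebra $(\cO,*,n)$'': the centralizers $\Fix(\tau_f)$ are computed with respect to the varying Cayley structures $\cdot_f$, not with respect to $\cdot_e$, so one must realize the split quaternion as a centralizer by a rational idempotent. Over $\Falg$ there is nothing to do, since all idempotents of the split Okubo algebra are conjugate with centralizer $M_2(\Falg)$ (Corollary~\ref{co:idempotents_char_not3}); the whole problem is the descent to $\FF$. An alternative way to organize this final step, which I would pursue in parallel, is to base change to $\cK_3=\FF[\omega]$ ($\omega$ a primitive cube root of $1$): there $\cQ'\otimes\cK_3\supseteq\cK\otimes\cK_3\cong\cK_3\times\cK_3$ is automatically split, so $\cO\otimes\cK_3$ is split Okubo by Theorem~\ref{th:char_not3}.(2), and one then descends along the (at most quadratic) extension $\cK_3/\FF$ using the $\FF$-rational idempotent $e$ together with the explicit stabilizer $\CCentr_{\AAut(\cO,\cdot,n)}(\tau_e)$ of Theorem~\ref{th:Stab_tau_wxw2}. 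In either approach, isotropy of $n$ is precisely what guarantees that a split centralizer exists over $\FF$.
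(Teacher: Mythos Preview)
Your `only if' direction and the initial reduction in the `if' direction are correct and match the paper: form the Cayley algebra $(\cO,\cdot,n)$ with unity $e$, apply Theorem~\ref{th:char_not3}.(2) so that $\cQ'=\Fix(\tau_e)$ is a quaternion subalgebra containing $w$ with $w^{\cdot 2}+w+1=0$, and conclude at once when $\cQ'$ is split.

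The gap you yourself flag is the crux, and neither outline closes it. Your first approach produces a split quaternion subalgebra of $(\cO,\cdot,n)$ through $\cK$, but supplies no mechanism to realize it as $\Fix(\tau_f)$ for a concrete idempotent $f$; as you note, the Cayley structure varies with the idempotent, so this step is not formal. Your second approach (base change to $\FF[\omega]$ and descent) correctly shows $\cO\otimes\FF[\omega]$ is split, but the passage back to $\FF$ needs an actual cocycle or descent argument that you have not set up; knowing the stabilizer from Theorem~\ref{th:Stab_tau_wxw2} does not by itself provide one.

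The paper's resolution is an explicit construction, with no descent. Write $\cO=\cQ'\oplus\cQ'\cdot u$ with $u\in(\cQ')^\perp$ nonisotropic and $\tau_e(u)=w\cdot u$. If $n\vert_{\cQ'}$ is anisotropic, isotropy of $n$ yields nonzero $q_1,q_2\in\cQ'$ with $n(q_1+q_2\cdot u)=0$, and after replacing $u$ by $q_1^{-1}\cdot(q_2\cdot u)$ one may assume $n(u)=-1$ (this changes $w$, but the new $w$ still satisfies $w^{\cdot 2}+w+1=0$). Then $f\bydef w^{\cdot 2}$ is an idempotent of $(\cO,*,n)$: since $\tau_e$ fixes $w$ and $\overline{w^{\cdot 2}}=w$, one has $f*f=w\cdot w=w^{\cdot 2}$. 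A short direct computation in the Cayley--Dickson decomposition gives $f*(1+u)=(1+u)*f=w-u$, so $1+u\in\Centr_{(\cO,*,n)}(f)=\Fix(\tau_f)$; but $n(1+u)=1+n(u)=0$, hence $\Fix(\tau_f)$ is the split quaternion algebra, and the already-settled case (applied now to $f$) finishes. The missing idea in your proposal is precisely this explicit choice $f=w^{\cdot 2}$ after normalizing $n(u)=-1$.
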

\begin{proof}
The split Okubo algebra is the Petersson algebra $\cC_{\tau_{st}}=(\cC,*,n)$, where $(\cC,\cdot,n)$ is Zorn matrix algebra, $\tau_{st}$ is defined in \eqref{eq:tau_st}, and 
\[
x*y=\tau_{st}(\bar x)\cdot\tau_{st}^2(\bar y)
\]
for any $x,y\in\cC$. The norm $n$ is isotropic and the unity $1$ of $(\cC,\cdot,n)$ is an idempotent of $\cC_{\tau_{st}}$.

Conversely, let $(\cO,*,n)$ be an Okubo algebra with isotropic norm $n$ and let $0\neq e=e*e$ be an idempotent. Let $\tau_e\in\Aut(\cO,*,n)$ as in \eqref{eq:e_tau}, and consider the Cayley algebra $(\cO,\cdot,n)$ with unity $e$, where $x\cdot y=(e*x)*(y*e)$ as in \eqref{eq:exye}. Then $x*y=\tau_e(\bar x)\cdot\tau_e^2(\bar y)$, where $\bar x=n(e,x)e-x$ is the `conjugation' relative to $e$. By Theorem \ref{th:char_not3}, $\cQ=\Fix(\tau_e)$ is a quaternion subalgebra of $(\cO,\cdot,n)$ and $\cO=\cQ\oplus\cQ\cdot u$ for a nonisotropic element $u\in\cO$ orthogonal to $\cQ$. Besides, $\tau_e(u)=w\cdot u$ for a norm $1$ element $w\in\cQ$ such that $w^{\cdot 3}=1\neq w$. If the restriction of $n$ to $\cQ$ is isotropic, then $(\cQ,\cdot,n)$ is isomorphic to $M_2(\FF)$, so the restriction $n\vert_\cQ$ is universal. Hence, by multiplying by a suitable $q\in\cQ$, we may asume $n(u)=1$ (this will change the element $w$). Then the multiplication in $(\cO,*,n)$ is completely determined (see \cite[Example 9.4]{CEKT}) so, up to isomorphism, $(\cO,*,n)$ is the split Okubo algebra.

If the restriction $n\vert_\cQ$ is anisotropic, then since $n$ itself is isotropic, there are nonzero elements $q_1,q_2\in\cQ$ such that $n(q_1+q_2\cdot u)=0$, so that $n\bigl(q_1^{-1}\cdot(q_2\cdot u)\bigr)=-1$. Replacing $u$ by $q_1^{-1}\cdot(q_2\cdot u)$ we may assume $n(u)=-1$ (again, this will change the element $w$). Note then that the element $f=w^{\cdot 2}$ is an idempotent:
\[
w^{\cdot 2}*w^{\cdot 2}=\tau_e\left(\wb{w^{\cdot 2}}\right)\cdot\tau_e^2\left(\wb{w^{\cdot 2}}\right)
 =\wb{w^{\cdot 2}}\cdot\wb{w^{\cdot 2}}=w\cdot w=w^{\cdot 2},
\]
and 
\begin{gather*}
f*(1+u)=w\cdot(1-w^{\cdot 2}\cdot u)=w-u,\\
(1+u)*f=(1-w\cdot u)\cdot w=w-w\cdot u\cdot w=w-w^{\cdot 2}\cdot(w-u)=w-u.
\end{gather*}
Hence if $\tau_f$ denotes the corresponding order $3$ automorphism: $\tau_f(x)=f*(f*x)$, and we consider the Cayley algebra $(\cO,\diamond,n)$ with $x\diamond y=(f*x)*(y*f)$, the fixed subalgebra $\tilde\cQ=\{x\in\cO: f*x=x*f\}$ is a quaternion subalgebra of $(\cO,\diamond,n)$ with isotropic norm, because $1+u\in\tilde\cQ$ and $n(1+u)=1-1=0$. By the arguments above, $(\cO,*,n)$ is the split Okubo algebra.
\end{proof}

\begin{example}
According to Example \ref{ex:real}, there are two different conjugacy classes of order $3$ automorphisms $\tau$ of the split Cayley algebra $(\cC,\cdot,n)$ over $\RR$ such that $\cC_\tau$ is an Okubo algebra. By 
Theorem \ref{th:O_split_e}, $\cC_\tau$ is always the split Okubo algebra and hence, by Corollary \ref{co:idempotents_char_not3}, the real split Okubo algebra contains two conjugacy classes of idempotents.
\end{example}

%can be described by means of the Cayley-Dickson doubling process (see \cite[33.C]{KMRT}), 
%as $\cC=\cQ\oplus \cQ\cdot u$, where $(\cQ,\cdot,n)$ is the real division algebra of 
%quaternions, $u$ is orthogonal to $\cQ$ and $n(u)=-1$.
%
%As $\cQ$ contains a copy of $\CC$, it contains elements $w$ with $w^{\cdot 3}=1\neq w$ 
%and the map defined by
%\[
%\tau(q_1+q_2\cdot u)=q_1+q_2\cdot(w\cdot u)=q_1+(w\cdot q_2)\cdot u
%\]
%is an order $3$ automorphism. The Petersson algebra $\cC_\tau=(\cC,*,n)$, with 
%$x*y=\tau(\bar x)\cdot \tau^2(\bar y)$ is, by Theorem \ref{th:O_split_e}, the split 
%Okubo algebra and $(\cQ,*,n)$ is a para-quaternion subalgebra.
%
%Hence the real split Okubo algebra contains para-quaternion subalgebras associated 
%to the two different real quaternion algebras: the division algebra and the split 
%one ($M_2(\RR)$). Therefore, according to Corollary \ref{co:idempotents_char_not3}, 
%the real split Okubo algebra contains two conjugacy classes of idempotents.
%\end{example}

\bigskip

%-------------------------------------------
%  SECTION: charac=3
%------------------------------------------

\section{Order $3$ elements in $G_2$, $\charac\FF= 3$}\label{se:char3_autos}

Over a field $\FF$ of characteristic $3$, the situation for order $3$ elements in the automorphism group of a Cayley algebra is quite different. To begin with, only the split Cayley algebra admits order $3$ automorphisms (Theorem \ref{th:char3} below). In this case the group of automorphisms is the Chevalley group of type $G_2$, and given any split maximal torus and any root, all the elements $\exp(\delta)$, where  $\delta$ is a nonzero element in a root space (these elements make sense, as shown in Section \ref{se:Zorn_autos}), have order $3$ as its cube is $\exp(3\delta)=\exp(0)=\id$. 

\begin{remark}\label{re:Der_char3}
Over fields of characteristic $3$, the Lie algebra $\Der(\cC,\cdot,n)$ of derivations  of a Cayley algebra is not simple, as it contains the simple seven-dimensional ideal $\ad_\cC$ ($\ad_1=0$), which is a twisted form of $\frpsl_3$. Moreover, the quotient $\Der(\cC,\cdot,n)/\ad_{\cC}$ is isomorphic to $\ad_\cC$. (See, for example, \cite{Pabloetal} or \cite{AlonsoAlberto}.)
\end{remark}

The next definition  extends \cite[Definition 9.2]{CEKT} (see also \cite[Definition 22]{Eld_Strade}).

\begin{df}\label{df:idempotents_3}
Let $f$ be an idempotent of the Okubo algebra $(\cO,*,n)$  ($\charac\FF=3$). Then $f$ is said to be:
\begin{itemize}
\item \emph{quaternionic}, if its centralizer contains a para-quaternion algebra,
\item \emph{quadratic}, if its centralizer contains a para-quadratic algebra and no para-quaternion subalgebra,
\item \emph{singular}, otherwise.
\end{itemize}
\end{df}

It is proved in \cite[Proposition 9.9 and Theorem 9.13]{CEKT} that only the split Okubo algebra contains a quaternionic idempotent, and there is only one such idempotent.

The next result extends \cite[Lemma 21]{Eld_Strade}. The Peirce component $\cU$ in Remark \ref{re:canonical_basis} generates the split Cayley algebra. Hence any automorphism is determined by its action on $\cU$.

\begin{theorem}\label{th:char3}
Let $(\cC,\cdot,n)$ be a Cayley algebra over a field $\FF$ of characteristic $3$, and let $\tau$ be an order $3$ automorphism of $(\cC,\cdot,n)$. Then $(\cC,\cdot,n)$ is the split Cayley algebra and one of the following conditions holds:
\begin{enumerate}
\item $(\tau-\id)^2=0$. In this case, the Petersson algebra $\cC_\tau$ is the split Okubo algebra, $1$ is its (unique) quaternionic idempotent, and there exists a canonical basis of $\cC$ such that
\[
\tau(u_i)=u_i,\ i=1,2,\quad \tau(u_3)=u_3+u_2.
\]
In other words, $\tau=\exp(\delta)$, where $\delta=[L_{u_2},R_{v_3}]$, which is a derivation in the root space $\Der(\cC,\cdot,n)_{\veps_2-\veps_3}$. ($\veps_2-\veps_3$ is a long root.)

In particular, up to conjugacy, there is a unique such automorphism $\tau$. The Segre symbol of the nilpotent endomorphism $\tau-\id$ is $(2^2,1^4)$.

\item $(\tau-\id)^2\ne 0$ and there is a quadratic \'etale subalgebra $\cK$ of $\cC$ fixed elementwise by $\tau$. In this case, the Petersson algebra $\cC_\tau$ is an Okubo algebra, $1$ is a quadratic idempotent of $\cC_\tau$, and the Segre symbol of the nilpotent endomorphism $\tau-\id$ is $(3^2,1^2)$.

If $\FF$ is algebraically closed, then there is a canonical basis of $\cC$ such that 
\[
\tau(u_i)=u_{i+1}\quad \text{(indices modulo $3$)}
\]
so $\tau$ fixes $e_1$ and $e_2$, and it belongs to the normalizer  of the associated maximal torus. (Note that $\tau$ is the automorphism $\tau_{st}$ in \eqref{eq:tau_st}.)

\item There is a canonical basis such that
\[
\tau(u_i)=u_i,\ i=1,2,\quad \tau(u_3)=u_3+v_3-(e_1-e_2).
\]
In this case, $\tau(x)=w\cdot x\cdot w^{\cdot 2}$, with $w=1-v_3$. Hence (Theorem \ref{th:tau_w}) the Petersson algebra $\cC_\tau$ is a para-Cayley algebra with para-unit $w$. Also, $\tau=\exp(\delta)$, where $\delta=-\ad v_3:x\mapsto -[v_3,x]=x\cdot v_3-v_3\cdot x$ is a derivation in the root space $\Der(\cC,\cdot,n)_{-\veps_3}$. ($-\veps_3$ is a short root.)

In particular, up to conjugacy, there is a unique such automorphism $\tau$. The Segre symbol of the nilpotent endomorphism $\tau-\id$ is $(3,2^2,1)$.

\item There is a canonical basis such that
\[
\tau(u_i)=u_i,\ i=1,2,\quad \tau(u_3)=u_3+u_2+v_3-(e_1-e_2).
\]
In this case, the Petersson algebra $\cC_\tau$ is the split Okubo algebra, and $1$ is a singular idempotent of $\cC_\tau$. Such an automorphism $\tau$ is unique, up to conjugacy, and it is the composition of the (commuting) automorphisms in items \textup{(1)} and \textup{(3)}. The Segre symbol of the nilpotent endomorphism $\tau-\id$ is again $(3,2^2,1)$.
\end{enumerate}
\end{theorem}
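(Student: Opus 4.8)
The plan is to study $\delta\bydef\tau-\id$ as a nilpotent linear operator on the eight-dimensional space $\cC$ and to read the algebra-theoretic content off its Jordan type. Since $\charac\FF=3$ and $\tau^3=\id$, the binomial expansion collapses to $\delta^3=(\tau-\id)^3=\tau^3-\id=0$, so $\delta$ is nilpotent of index at most $3$, and $\delta\neq 0$ because $\tau$ has order $3$. Thus $\tau$ is a nontrivial \emph{unipotent} automorphism; as the automorphism group of a division (i.e. non-split) Cayley algebra is an anisotropic form of $G_2$, and an anisotropic reductive group has no nontrivial unipotent elements, $(\cC,\cdot,n)$ must be split. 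This disposes of the first assertion. Two structural identities then drive everything: expanding $\tau(x\cdot y)=\tau(x)\cdot\tau(y)$ gives the twisted-derivation relation $\delta(x\cdot y)=(\delta x)\cdot y+x\cdot(\delta y)+(\delta x)\cdot(\delta y)$, and preservation of the norm gives $n(x,\delta x)+n(\delta x)=0$ for all $x$, so $\delta$ is skew with respect to the nondegenerate polar form. I will also use repeatedly that $\Fix(\tau)=\ker\delta$ is a unital subalgebra, whose dimension equals the number of Jordan blocks of $\delta$.

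Next I would classify the Jordan type of $\delta$. Skewness places $\delta$ in $\frso(\cC,n)$, so its even blocks occur with even multiplicity, while $\delta^3=0$ caps all block sizes at $3$. These constraints leave only the partitions $(2^2,1^4)$, $(2^4)$, $(3,1^5)$, $(3^2,1^2)$ and $(3,2^2,1)$ of $8$. The twisted-derivation relation, together with the fact that $\Fix(\tau)$ must be a subalgebra of a very restricted dimension, then eliminates the spurious types $(2^4)$ and $(3,1^5)$ (for instance, when $\delta^2=0$ the identity forces $\ker\delta$ to be a quaternion subalgebra and pins $\rank\delta$ to $2$, excluding $(2^4)$). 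The surviving list is exactly $(2^2,1^4)$, $(3^2,1^2)$ and $(3,2^2,1)$, and these are separated by $\rank(\delta^2)$: the value $0$ characterizes case (1), the value $2$ characterizes case (2), and the value $1$ picks out the common Jordan type of cases (3) and (4).

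With the Jordan type fixed I would realize each class by a canonical basis. When $\delta^2=0$ I would produce a nonzero square-zero element (respectively a suitable idempotent) fixed by $\tau$ and apply Lemma~\ref{le:useful} to normalize to the displayed basis with $\tau(u_3)=u_3+u_2$, recognizing $\tau=\exp([L_{u_2},R_{v_3}])$ in the long-root space $\Der(\cC,\cdot,n)_{\veps_2-\veps_3}$; that $\cC_\tau$ is the split Okubo algebra and $1$ its unique quaternionic idempotent then follows from \cite[Proposition 9.9 and Theorem 9.13]{CEKT} via Proposition~\ref{pr:fix_cent}. For case (2) I would exhibit inside $\Fix(\tau)$ a quadratic \'etale subalgebra fixed elementwise, which forces $\rank(\delta^2)=2$ and, over an algebraically closed field, pins $\tau$ down up to conjugacy to $\tau_{st}$ of \eqref{eq:tau_st} (so $\cC_\tau$ is Okubo and $1$ is quadratic). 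For cases (3) and (4) the decisive step is to decide whether $\cC_\tau$ is para-Cayley: by Theorem~\ref{th:tau_w} this happens exactly when there is $w\in\cC\setminus\FF 1$ with $w^{\cdot 2}+w+1=0$ and $\tau=\tau_w$, giving case (3) with $w=1-v_3$ and $\tau=\exp(-\ad v_3)$ in the short-root space $\Der(\cC,\cdot,n)_{-\veps_3}$; the complementary possibility is case (4), where one verifies directly that the prescribed $\tau$ is the composition of the commuting automorphisms of (1) and (3), so $\cC_\tau$ is split Okubo with $1$ singular.

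The main obstacle is precisely the separation of cases (3) and (4). This is the characteristic-$3$ degeneracy: $3$ is a bad prime for $G_2$, the two classes share the Jordan type $(3,2^2,1)$, and hence \emph{no} invariant built from $\delta$ as a linear map can distinguish them. The separation must instead come from the multiplicative structure of $\cC_\tau$---equivalently, from the type of the idempotent $1$---and I expect the cleanest route to be the para-Cayley criterion of Theorem~\ref{th:tau_w}, supplemented by a direct check that the two normal forms are not conjugate (exhibiting a fixed para-unit $w$ with $w^{\cdot 2}+w+1=0$ in case (3) and showing no such element exists in case (4)). By contrast, establishing conjugacy \emph{within} each class, through explicit canonical bases furnished by Lemma~\ref{le:useful}, is routine by comparison of multiplication tables, and the Segre symbols then follow from the normal forms already written down.
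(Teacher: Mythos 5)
Your architecture is sensible and you have correctly isolated the genuinely delicate point (cases (3) and (4) share the Jordan type $(3,2^2,1)$ and must be separated by the multiplicative structure, via the para-Cayley criterion of Theorem \ref{th:tau_w}), but several of the steps you lean on are either false as stated or left unargued. First, $\delta=\tau-\id$ is \emph{not} skew for the polar form: norm preservation gives $n(x,\delta x)=-n(\delta x)$, which is nonzero in general, so $\delta\notin\frso(\cC,n)$. The fact that even Jordan blocks of a unipotent isometry occur with even multiplicity is true (in odd characteristic), but it comes from Wall's classification of unipotent classes in orthogonal groups, not from your skewness claim. Second, your elimination of $(2^4)$ rests on the assertion that $\delta^2=0$ forces $\ker\delta$ to be a quaternion subalgebra; this is false. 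In case (1) the fixed subalgebra $\Fix(\tau)=\ker\delta$ is six-dimensional and meets its own orthogonal complement (that intersection is exactly what makes $\cC$ split), so it is degenerate for $n$ and cannot be a composition subalgebra; the correct statement is that it \emph{contains} a quaternion subalgebra. (The type $(2^4)$ can instead be killed by noting $\im\delta\perp 1$, so $\ker\delta=\im\delta$ would force $n(1,1)=0$, impossible in characteristic $3$.) Third, $(3,1^5)$ has six Jordan blocks, hence the same $\dim\Fix(\tau)=6$ as $(2^2,1^4)$, so ``the restricted dimension of $\Fix(\tau)$'' does not exclude it and you supply no argument that does. Finally, your proof that $\cC$ is split invokes the principle that anisotropic reductive groups have no nontrivial unipotent rational points; that is a Borel--Tits statement valid over perfect fields, and imperfect fields of characteristic $3$ are essential in this paper. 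The paper's own argument is both shorter and safe: $\Fix(\tau)^\perp$ is $\tau$-invariant, so $\Fix(\tau)\cap\Fix(\tau)^\perp\neq 0$, producing an isotropic vector for $n$.

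More structurally, the paper does not classify Jordan types first and then realize them; it establishes (importing \cite[Lemma 21]{Eld_Strade}) an algebra-theoretic trichotomy on $\Fix(\tau)$ --- it contains a quaternion subalgebra, or a quadratic \'etale subalgebra, or equals $\FF 1\oplus\cW$ with $\cW$ maximal isotropic --- and the Segre symbols are read off \emph{afterwards} from explicit normal forms. Your plan inverts this, and the inversion is where the work hides: passing from the partition $(3^2,1^2)$ to the existence of a fixed quadratic \'etale subalgebra, or from $(3,2^2,1)$ to $\Fix(\tau)=\FF1\oplus\cW$, is not automatic, and the subsequent derivation of the two normal forms in the third case (the choice of $\cW'$, the normalization $n(a)=-1$, the computation $\tau(w')=w'-a+u+w$ and the dichotomy $u=0$ versus $u\neq 0$, plus the verification that $1-v_3$ is the quaternionic idempotent in case (4)) is a sustained computation, not a routine comparison of multiplication tables. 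Case (2) is likewise not a matter of ``exhibiting'' the subalgebra: its full treatment (that $\cC_\tau$ is Okubo, the normal form, and the conjugacy classification) occupies the separate Theorem \ref{th:quadratic_auto}. As it stands the proposal is a plausible outline with two incorrect supporting claims and the hardest steps left open.
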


\begin{proof}
$\Fix(\tau)^\perp$ is a subspace invariant under $\tau$ and, since $\tau-\id$ is nilpotent ($(\tau-\id)^3=0$), $\Fix(\tau)\cap\Fix(\tau)^\perp$ is a nonzero isotropic subspace. Hence $(\cC,\cdot,n)$ is the split Cayley algebra. 

As in \cite[Lemma 21]{EP96} either:
\begin{romanenumerate}
\item $(\tau-\id)^2=0$, in which case there is a quaternion subalgebra $\cQ$ of $\cC$ contained in $\Fix(\tau)$, or
\item $(\tau-\id)^2\ne 0$ and there is a quadratic \'etale subalgebra $\cK$ in $\Fix(\tau)$, or
\item $(\tau-\id)^2\ne 0$, $\Fix(\tau)=\FF 1\oplus\bigl(\Fix(\tau)\cap \cC_0\bigr)$, where $\cC_0=(\FF 1)^\perp$, and $\Fix(\tau)\cap\cC_0$ is a maximal isotropic subspace (hence of dimension $3$) of $\cC_0$.
\end{romanenumerate}

In case (i), for any nonisotropic $u\in\cQ^\perp$, $\cC=\cQ\oplus\cQ\cdot u$, and $\tau(u)=w\cdot u$, for a norm $1$ element $w\in\cQ$ such that $w^{\cdot 3}=1$. Then $w^{\cdot 2}=\bar w=w^{-1}$ and, as in the proof of \ref{th:tau_w}, we get $w^{\cdot 2}+w+1=0$. That is, $(w-1)^{\cdot 2}=0$ since the characteristic is $3$. We conclude that $\cQ$ is split: $\cQ\simeq M_2(\FF)$. But then the norm is universal in $\cQ$, and we may take $u\in\cQ^\perp$ with $n(u)=1$. Also, there is an isomorphism $\cQ\simeq M_2(\FF)$ that takes $w$ to $\left(\begin{smallmatrix} 1&1\\ 0&1\end{smallmatrix}\right)$. Then there is a canonical basis with $\cQ=\espan{e_1,e_2,u_1,v_1}$, $w=1+u_1$, and $u=u_2+v_2$. Therefore, we get
\[
\begin{split}
\tau(u_1)&=u_1,\quad \tau(v_1)=v_1,\\
 \tau(u_2)&=\tau(e_1\cdot u) =e_1\cdot\bigl((1+u_1)\cdot (u_2+v_2)\bigr)\\
     &=e_1\cdot(u_2+v_2-v_3)=u_2,\\
 \tau(v_2)&=\tau(e_2\cdot u)=e_2\cdot(u_2+v_2-v_3)=v_2-v_3,\\
 \tau(u_3)&=\tau(v_1\cdot v_2)=v_1\cdot (v_2-v_3)=u_3+u_2,\\
 \tau(v_3)&=\tau(u_1\cdot u_2)=v_3,
\end{split}
\]
Hence $\Fix(\tau)=\cQ + \{x\in\cQ : w\cdot x=x\}\cdot u=\espan{e_1,e_2,u_1,v_1,u_2,v_3}$ and we obtain the first possibility in the Theorem.

Case (ii) above corresponds to the second possibility in the Theorem, which will be developed in Theorem \ref{th:quadratic_auto}.

We are left with case (iii) above, so $\Fix(\tau)=\FF1\oplus\cW$, where $\cW$ is a three-dimensional isotropic subspace of $\cC_0$. Let $\cW'$ be another three-dimensional isotropic subspace of $\cC_0$ paired with $\cW$ (that is, the bilinear map $\cW\times\cW'\rightarrow \FF$, $(x,y)\mapsto n(x,y)$, is nondegenerate). Then $\cC_0=\cW\oplus\cW'\oplus\FF a$, with $\FF a=(\cW\oplus\cW')^\perp\cap\cC_0$ (nonisotropic). By Witt's Cancellation Theorem, the restriction of $n$ to $\FF 1\oplus\FF a$ is hyperbolic, so $-n(a)\in(\FF^\times)^2$, and we may scale $a$ so that $n(a)=-1$, that is, $a^{\cdot 2}=1$.

For any $x\in\cC_0$ and $y\in\cW$, since $\tau(y)=y$ we have
\[
n\bigl((\tau-\id)(x),y\bigr)=n\bigl(\tau(x),y\bigr)-n(x,y)=n\bigl(\tau(x),\tau(y)\bigr)-n(x,y)=0,
\]
so $(\tau-\id)(\cC_0)\subseteq \cW^\perp\cap\cC_0=\cW\oplus \FF a$. Then $0\ne (\tau-\id)^2(\cC_0)\subseteq \FF(\tau-\id)(a)$, and it follows that the Segre symbol of $(\tau-\id)\vert_{\cC_0}$ is $(3,2^2)$, so that the Segre symbol of $\tau-\id$ is $(3,2^2,1)$. Moreover, $(\tau-\id)(\cC_0)=\cW\oplus\FF a$.

Now, $\tau(a)=a+w$ for some $0\ne w\in \cW$. Also,
\[
n(a\cdot\cW,\cW)=n(a,\cW^{\cdot 2})\subseteq n(a,\cW)=0,\quad 
n(a\cdot\cW,a)=n(a^{\cdot 2},\cW)=n(1,\cW)=0,
\]
so we obtain
\[
a\cdot \cW=\cW\cdot a\subseteq \cW.
\]
Moreover, for any $x\in\cW$,
\[
w\cdot x=\tau(a)\cdot x-a\cdot x=\tau(a\cdot x)-a\cdot x=0
\]
because $a\cdot x$ is in $\cW\subseteq\Fix(\tau)$. Hence 
\[
w\cdot \cW=0=\cW\cdot w.
\]
The elements $e_1=\frac{1}{2}(1-a)$ and $e_2=\frac{1}{2}(1+a)$ are orthogonal idempotents, and let $\cC=\FF e_1\oplus\FF e_2\oplus\cU\oplus\cV$ be the corresponding Peirce decomposition: $\cU=e_1\cdot\cC\cdot e_2$, $\cV=e_2\cdot\cC\cdot e_1$. Then $\cW$ is invariant under multiplication by $e_1$ and $e_2$, so that $\cW=(\cW\cap\cU)\oplus(\cW\cap\cV)$. Besides, neither $\FF 1\oplus\cU$ nor $\FF 1\oplus\cV$ are subalgebras, so $\cW\cap\cU\neq\cW\neq\cW\cap\cV$. Changing $a$ by $-a$ if necessary (i.e., interchanging $e_1$ and $e_2$), we may (and will) assume $\dim_\FF(\cW\cap\cU)=2$, $\dim_\FF(\cW\cap\cV)=1$. Since $w\cdot\cW=0$, it follows that $\FF w=\cW\cap\cV$ and hence $a\cdot w=(e_1-e_2)\cdot w=-w$.

The subspace $\cW'$ may now be chosen so that $\cW'=(\cW'\cap\cU)\oplus(\cW'\cap\cV)$, with $\dim_\FF(\cW'\cap\cU)=1$ and $\dim_\FF(\cW'\cap\cV)=2$. Take the element $w'\in\cW'\cap\cU$ such that $n(w',w)=1$. Then $(\tau-\id)(w')\in\cW\oplus\FF a$, so
\[
\tau(w')=w'+\alpha a+u+\beta w
\]
for some $\alpha,\beta\in\FF$ and $u\in\cW\cap\cU$. From $\tau(a)=a+w$ we deduce:
\[
\begin{split}
\tau(e_1)&=\tau\left(\frac{1}{2}(1+a)\right)=\frac{1}{2}(1+a+w)=e_1-w,\\
\tau(e_2)&=\tau\left(\frac{1}{2}(1-a)\right)=\frac{1}{2}(1-a-w)=e_2+w.
\end{split}
\]
Then we get
\[
e_1-w=\tau(e_1)=-\tau(w'\cdot w)=-(w'+\alpha a+u+w)\cdot w=-w'\cdot w-\alpha a\cdot w=e_1+\alpha w,
\]
where we have used that $\cW\cdot w=0$. Hence $\alpha=-1$. Also,
\[
0=n(w')=n\bigl(\tau(w')\bigr)=n(a)+\beta n(w',w)=-1+\beta,
\]
so $\beta=1$ and, therefore,
\[
\tau(w')=w'-a+u+w.
\]
We are left with two cases:
\begin{itemize}
\item If $u=0$, we get
\[
(1-w)\cdot w'\cdot(1+w)=(w'+e_2)\cdot(1+w)=w'-e_1+e_2+w=\tau(w'),
\]
and for any $x\in\cW\cap \cU$, $(1-w)\cdot x\cdot(1+w)=x$ (recall that $\cW\cdot w=0=w\cdot \cW$). Since $\cU$ generates $\cC$, it follows that $\tau$ is the automorphism $x\mapsto (1-w)\cdot x\cdot(1-w)^{\cdot 2}$. (Note that the element $c=1-w$ satisfies $c^{\cdot 2}+c+1=0$.) 

Hence $\cC_\tau$ is para-Cayley (Theorem \ref{th:tau_w}). Let $u_3=w'$ and take $u_1,u_2\in\cW\cap\cU$ with $n(u_1\cdot u_2,u_3)=1$. In this situation, with $v_i=u_{i+1}\cdot u_{i+2}$ (indices modulo $3$), $\{e_1,e_2,u_1,u_2,u_3,v_1,v_2,v_3\}$ is a canonical basis of $(\cC,\cdot,n)$. Then $w=v_3$, because $n(u_i,w)=0$ for $i=1,2$ as $\cW$ is isotropic, and $n(u_3,w)=1$; and 
\[
\tau(u_i)=u_i,\ i=1,2,\quad \tau(u_3)=u_3-(e_1-e_2)+v_3,
\]
thus obtaining the third possibility of the Theorem.

\item If $u\ne 0$, let $u_3=w'$, $u_2=u$ and take $u_1\in\cW\cap\cU$ with $n(u_1\cdot u_2,u_3)=1$. Then $w=v_3$ again, and we obtain:
\[
\tau(u_i)=u_i,\ i=1,2,\quad \tau(u_3)=u_3-(e_1-e_2)+u_2+v_3.
\]
It remains to prove that the Petersson algebra $\cC_\tau=(\cC,*,n)$ is the split Okubo algebra. The element $e=1-v_3$ satisfies:
\begin{itemize}
\item $e*e=\tau(\bar e)\cdot\tau^2(\bar e)=(1+v_3)^{\cdot 2}=1-v_3=e$, so $e$ is an idempotent of $\cC_\tau$.

\item For any $x\in \cW$, $x*e=\tau(\bar x)\cdot\tau^2(\bar e)=-x\cdot (1+v_3)=-x$, as $\cW\cdot w=\cW\cdot v_3=0$, so $e*x=-e*(x*e)=-x$ too.

\item $a*e=\tau(\overline{e_1-e_2})\cdot\tau^2(\bar e)=-(e_1-e_2+v_3)\cdot (1-v_3)=-a-v_3+v_3=-a$, so that $e*a=-e*(a*e)=-a$ too.

\item $\tau(v_1)=\tau(u_2\cdot u_3)=u_2\cdot \tau(u_3)=v_1+u_2$, so $v_1*e=\tau(\bar v_1)\cdot\tau^2(\bar e)=-(v_1+u_2)\cdot(1-v_3)=-v_1-u_2+u_2=-v_1$, so that $e*v_1=-v_1$ too as before.
\end{itemize}

Hence $\espan{e_1,e_2,u_1,u_2,v_1,v_3}$ is contained in the centralizer in $\cC_\tau$ of $e$. But $\tau(v_2)=\tau(u_3\cdot u_1)=\tau(u_3)\cdot u_1=v_2-u_1-v_3$ and $v_2*e=\tau(\bar v_2)\cdot\tau^2(\bar e)=-(v_2-u_1-v_3)\cdot(1-v_3)=-v_2+u_1+u_1+v_3=-v_2-u_1+v_3$, and hence $e$ is not a para-unit of $\cC_\tau$. Consider the Cayley algebra $(\cC,\diamond,n)$ with $x\diamond y=(e*x)*(y*e)$, whose unity is $e$. Then $\tau':x\mapsto e*(e*x)$ is an automorphism of $(\cC,\diamond,n)$ with $(\tau')^3=\id$, and $\tau'\neq \id$ because $e$ is not a para-unit of $\cC_\tau=(\cC,*,n)$. As $\Fix(\tau')$, which is the centralizer in $\cC_\tau$ of $e$, has dimension at least $6$, $\tau'$ is necessarily an order $3$ element of $\Aut(\cC,\diamond,n)$ as in the first item of the Theorem, and hence $e$ is the quaternionic idempotent of the split Okubo algebra $\cC_\tau$. \qedhere
\end{itemize}
\end{proof}

\begin{remark}\label{re:char3_order3_generate_Aut}
Let $\FF$ be an algebraically closed field of characteristic $3$. In \cite[\S 5]{Ste68} it is shown that the automorphisms in items (1) and (3) of Theorem \ref{th:char3} generate the Chevalley group of type $G_2$, which is the whole group $\Aut(\cC,\cdot,n)$. In particular, $\Aut(\cC,\cdot,n)$ is generated (as an abstract group) by its order $3$ elements.
\end{remark}

\begin{corollary}\label{co:singular}
If an Okubo algebra $(\cO,*,n)$ contains a singular idempotent, then it is split.
\end{corollary}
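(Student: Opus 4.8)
The plan is to feed the given idempotent into the dictionary between idempotents of symmetric composition algebras and order $3$ automorphisms of Cayley algebras, and then quote Theorem \ref{th:char3} essentially verbatim. Since the notion of a \emph{singular} idempotent is only defined in characteristic $3$ (Definition \ref{df:idempotents_3}), I may assume $\charac\FF=3$. Let $e$ be a singular idempotent of the Okubo algebra $(\cO,*,n)$, and let $\tau$ be the automorphism $x\mapsto e*(e*x)$ from \eqref{eq:e_tau}, so that $\tau^3=\id$. Forming the Hurwitz multiplication \eqref{eq:exye} turns $(\cO,\cdot,n)$ into a Cayley algebra with unity $e$, on which $\tau$ is again an automorphism, and \eqref{eq:Petersson} identifies the original algebra with the associated Petersson algebra: $(\cO,*,n)=(\cO,\cdot,n)_\tau$.

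The first step is to confirm that $\tau$ genuinely has order $3$. If $\tau=\id$, then $e$ is a para-unit and $(\cO,*,n)=(\cO,\cdot,n)_{\id}$ would be a para-Cayley algebra; but by Theorem \ref{th:paraH_or_Okubo} an Okubo algebra is never (a form of) a para-Hurwitz algebra. Hence $\tau\ne\id$, and since $\tau^3=\id$ its order is exactly $3$.

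Next I would apply Theorem \ref{th:char3} to the order $3$ automorphism $\tau$ of the Cayley algebra $(\cO,\cdot,n)$. The theorem forces $(\cO,\cdot,n)$ to be the split Cayley algebra and places $\tau$ in one of its four types, each of which determines the nature of the idempotent $e=1$ of the Petersson algebra $(\cO,\cdot,n)_\tau$. Case (3) cannot occur here, because in that case $(\cO,\cdot,n)_\tau$ is para-Cayley, whereas $(\cO,*,n)$ is Okubo. Case (1) would make $e$ quaternionic and case (2) would make $e$ quadratic, each contradicting the hypothesis that $e$ is singular. The only surviving possibility is case (4), in which Theorem \ref{th:char3} asserts that $(\cO,\cdot,n)_\tau$ is the split Okubo algebra. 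As this Petersson algebra is precisely $(\cO,*,n)$, the algebra $(\cO,*,n)$ is split, as claimed.

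I do not anticipate a substantive obstacle: once the identification \eqref{eq:Petersson} is in place, the conclusion is read directly off Theorem \ref{th:char3}. The only two points needing care are the exclusion of the degenerate case $\tau=\id$ (handled via Theorem \ref{th:paraH_or_Okubo}) and the matching of the three idempotent types with cases (1), (2) and (4) of the theorem, for which the labels supplied inside the statement of Theorem \ref{th:char3} are exactly what is required.
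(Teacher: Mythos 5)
Your proof is correct and is precisely the argument the paper intends: the corollary is stated without proof as an immediate consequence of Theorem \ref{th:char3}, read off by matching the idempotent type of $e=1$ in the Petersson algebra against the four cases and concluding that only case (4), where $\cC_\tau$ is the split Okubo algebra, is compatible with $e$ being singular. Your extra care in ruling out $\tau=\id$ via Theorem \ref{th:paraH_or_Okubo} is a sound (if routine) addition.
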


%In order to deal with the situation in case (2) of Theorem \ref{th:char3}, 
%some preliminaries are required.

\begin{example}\label{ex:tauKa}
Let $\cK$ be a quadratic \'etale algebra over a field $\FF$ of characteristic $3$, $a$ an element of $\cK$ of generic norm $1$, $\cW$ a free left $\cK$-module. Let $\{w_1,w_2,w_3\}$ be a $\cK$-basis of $\cW$ and let $\sigma:\cW\times\cW\rightarrow\cK$ be the nondegenerate hermitian form such that
\[
\sigma(w_i,w_i)=0,\quad \sigma(w_i,w_j)=-1\quad \text{for $1\leq i\neq j\leq 3$}.
\]
Finally, let $\Phi:\cW\times\cW\times\cW\rightarrow\cK$ be the alternating $\cK$-trilinear form such that $\Phi(w_1,w_2,w_3)=a$.

For any $x_1,x_2,x_3\in\cW$, $x_i=\sum_{j=1}^3a_{ij}\cdot w_j$, for some elements $a_{ij}\in\cK$. Then, since $\Phi(w_1,w_2,w_3)=a$ satisfies $n(a)=1$, and $\det\bigl(\sigma(w_i,w_j)\bigr)=1$ (because $\charac\FF=3$), we get
\begin{multline*}
n\bigl(\Phi(x_1,x_2,x_3)\bigr)=n\bigl(\det(a_{ij})\Phi(w_1,w_2,w_3)\bigr)\\
=\det(a_{ij})\overline{\det(a_{ij})}n\bigl(\Phi(w_1,w_2,w_3)\bigr)=n\bigl(\det(a_{ij})\bigr),
\end{multline*}
\[
\det\bigl(\sigma(x_i,x_j)\bigr)=\det\Bigl((a_{ij})\bigl(\sigma(w_i,w_j)\bigr)(\bar a_{ji})\Bigr)
  =\det(a_{ij})\det(\bar a_{ji})=n\bigl(\det(a_{ij})\bigr).
\]
Therefore, \eqref{eq:Phi_det} is satisfied and hence $\cC=\cK\oplus\cW$ is a Cayley algebra as in the paragraph previous to Lemma \ref{le:Stab_w}. Actually, it is the split Cayley algebra, as $n(w_i)=\sigma(w_i,w_i)=0$.

The $\cK$-linear map on $\cW$ permuting cyclically $w_1\mapsto w_2\mapsto w_3\mapsto w_1$ preserves $\sigma$ and $\Phi$, and hence we obtain an order $3$ automorphism $\tau$ of $\cC$ determined by
\[
\tau\vert_\cK=\id,\quad \tau(w_i)=w_{i+1}\ \text{(indices modulo $3$).}
\]
This automorphism will be denoted by $\tau_{\cK,a}$.\qed
\end{example}

Case (2) of Theorem \ref{th:char3} can be settled now.

\begin{theorem}\label{th:quadratic_auto}
Let $\tau$ be an order $3$ automorphism of a Cayley algebra $(\cC,\cdot,n)$ over a field $\FF$ of characteristic $3$ such that $(\tau-\id)^2\neq 0$ and $\tau$ fixes elementwise a quadratic \'etale subalgebra. Then the Petersson algebra $\cC_\tau$ is an Okubo algebra and $\tau$ is conjugate to an automorphism $\tau_{\cK,a}$ as in Example \ref{ex:tauKa}.

Moreover, two such automorphisms $\tau_{\cK,a}$ and $\tau_{\cK',a'}$ are conjugate if and only if there is an isomorphism $\varphi:\cK\rightarrow\cK'$ such that $\varphi(a)\in(\cK')^{\cdot 3}\cdot a'$ (i.e., there is an element $b'\in\cK'$, necessarily of norm $1$, such that $\varphi(a)=(b')^{\cdot 3}\cdot a'$).
\end{theorem}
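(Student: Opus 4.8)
The plan is to exploit the dictionary, recalled just before Lemma~\ref{le:Stab_w}, between a Cayley algebra carrying a distinguished quadratic \'etale subalgebra $\cK$ and the data $(\cW,\sigma,\Phi)$ on the rank-three $\cK$-module $\cW=\cK^{\perp}$, and thereby to reduce everything to the classification of a single special unitary map.

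\emph{Setup.} Because $\tau$ fixes $\cK$ elementwise it preserves $\cW$ and acts $\cK$-linearly on it. Comparing the $\cK$- and $\cW$-components of $\tau(x\cdot y)=\tau(x)\cdot\tau(y)$ for $x,y\in\cW$, and using $x\cdot y=-\sigma(x,y)+x\times y$, I would first observe that $\tau|_{\cW}$ preserves $\sigma$ and the product $\times$, hence the alternating form $\Phi$ of \eqref{eq:Phi_sigma}; since $\Phi$ generates the rank-one module of alternating $\cK$-trilinear forms, this forces $\det_{\cK}(\tau|_{\cW})=1$, so $\tau|_{\cW}\in\SSU(\cW,\sigma)$. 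In characteristic $3$ one has $(\tau-\id)^{3}=\tau^{3}-\id=0$, while $(\tau-\id)^{2}\neq0$ by hypothesis; as $\cW$ has $\cK$-rank $3$ this makes $\tau|_{\cW}$ a single unipotent Jordan block, so $\cW$ is a cyclic $\cK[\tau]$-module and $\{v,\tau v,\tau^{2}v\}$ is a $\cK$-basis for any cyclic generator $v$. (The associated Segre symbol $(3^{2},1^{2})$ differs from the para-Cayley value $(3,2^{2},1)$, so $\cC_\tau$ is not para-Cayley and is therefore an Okubo algebra by Theorem~\ref{th:paraH_or_Okubo}.)

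\emph{Existence.} I would manufacture a cyclic generator whose basis has exactly the Gram matrix of Example~\ref{ex:tauKa}. Let $z=(\tau-\id)^{2}v$ span $\Fix(\tau|_{\cW})$; from $\tau z=z$ and unitarity, $\sigma(z,(\tau-\id)\cW)=0$, so $z$ is isotropic, and $r:=\sigma(v,z)=\sigma(v,v)+\sigma(v,\tau v)+\overline{\sigma(v,\tau v)}$ lies in $\FF$ and is nonzero since $\sigma$ is nondegenerate. Replacing $v$ by $v+\gamma z$ with $\gamma+\bar\gamma=-\sigma(v,v)r^{-1}$ (possible since the trace $\cK\to\FF$ is onto) makes $v$ isotropic without altering $z$ or $r$; a further replacement $v\mapsto v+\delta(\tau-\id)v$ with suitable purely imaginary $\delta$ (i.e.\ $\delta+\bar\delta=0$) moves $\sigma(v,\tau v)$ into $\FF$, necessarily to $-r$, while keeping $v$ isotropic. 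The decisive point is that $r$ is a norm: for the basis $w_{i}=\tau^{\,i-1}v$ the (circulant, hermitian) Gram matrix has determinant $r^{3}$, so \eqref{eq:Phi_det} gives $n(\Phi(w_{1},w_{2},w_{3}))=r^{3}$ and hence $n\bigl(\Phi(w_{1},w_{2},w_{3})r^{-1}\bigr)=r$. Choosing $\lambda\in\cK^{\times}$ with $n(\lambda)=r^{-1}$ and scaling $v\mapsto\lambda v$ then yields $\sigma(w_{i},w_{i})=0$ and $\sigma(w_{i},w_{j})=-1$ for $i\neq j$. With $a:=\Phi(w_{1},w_{2},w_{3})$ (of norm $1$), sending the $w_{i}$ to the generators of Example~\ref{ex:tauKa} is an isomorphism $\cC\to\cC_{\cK,a}$ carrying $\tau$ to $\tau_{\cK,a}$.

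\emph{Classification.} For two models, a conjugating isomorphism $\psi$ carries $\Fix(\tau_{\cK,a})$ onto $\Fix(\tau_{\cK',a'})$. These fixed subalgebras are $\cK\oplus\cK z$ and $\cK'\oplus\cK'z'$ with nilpotent radicals $\cK z$, $\cK'z'$; the quadratic \'etale subalgebras fixed elementwise are precisely the Levi complements of the radical, which are conjugate under $\CCentr_{\AAut(\cC_{\cK',a'})}(\tau')$ by Wedderburn--Malcev, so after adjusting $\psi$ I may assume $\psi(\cK)=\cK'$. Then $\varphi:=\psi|_{\cK}$ is an algebra isomorphism, $\psi|_{\cW}$ is $\varphi$-semilinear and preserves $\sigma$ and $\Phi$ up to $\varphi$, and it intertwines the cyclic actions; hence it sends the standard basis of $\cW$ to a cyclic basis of $\cW'$ with standard Gram matrix. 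The transition $T$ from this basis to the standard one of $\cW'$ lies in $\cK'[\tau']$ and is unitary, so evaluating unitarity on the fixed line $\cK'z'$ gives $n(c_{0})=1$ for the leading coefficient $c_{0}$ of $T$, while $\det_{\cK'}T=c_{0}^{3}$. Comparing $\Phi'$ on the two bases yields $\varphi(a)=c_{0}^{3}a'$, i.e.\ $\varphi(a)\in(\cK')^{\cdot3}a'$. Conversely, given $\varphi$ and a norm-one $b'$ with $\varphi(a)=(b')^{3}a'$, scaling the standard basis of $\cW'$ by $b'$ and extending $\varphi$-semilinearly builds an isomorphism $\cC_{\cK,a}\to\cC_{\cK',a'}$ intertwining the two automorphisms.

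\emph{Main obstacle.} I expect the crux to be the Gram-matrix normalization in the existence step --- in particular the observation that the scalar $r=\sigma\bigl(v,(\tau-\id)^{2}v\bigr)$ is automatically a norm from $\cK$, which is precisely what makes the final rescaling possible and is forced by \eqref{eq:Phi_det}; the secondary delicate point is the conjugacy of the elementwise-fixed \'etale subalgebra inside $\Fix(\tau)$ that is used to reduce to $\psi(\cK)=\cK'$.
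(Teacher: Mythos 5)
Your existence argument is essentially the paper's: the reduction to a cyclic $\cK[\tau]$-generator of $\cW$, the identity $n\bigl(\Phi(v,\tau(v),\tau^2(v))\bigr)=r^{3}$ coming from \eqref{eq:Phi_det}, and the successive Gram-matrix adjustments reproduce steps (i)--(iv) of the paper's proof in a slightly different order, and your shortcut to ``$\cC_\tau$ is Okubo'' via the Segre symbols $(3^2,1^2)$ versus $(3,2^2,1)$ (Theorem \ref{th:tau_w}, Remark \ref{re:tau_w} and Theorem \ref{th:paraH_or_Okubo}) is a legitimate alternative to the paper's extension of scalars. One point you should not gloss over: when $\cK\simeq\FF\times\FF$ the phrase ``single unipotent Jordan block over $\cK$'' requires the paper's duality argument ($\cU$ and $\cV$ are isotropic and paired by $n$, and $\tau$ is an isometry, so $\tau\vert_{\cU}$ and $\tau\vert_{\cV}$ have the same minimal polynomial $(T-1)^3$); the hypothesis $(\tau-\id)^2\neq 0$ by itself only controls one Peirce component.

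The real issue is the first sentence of your classification step. Wedderburn--Malcev produces, for two \'etale complements $S_1,S_2$ of $\rad\bigl(\Fix(\tau')\bigr)$, an element $m$ of the radical with $(1+m)S_1(1+m)^{-1}=S_2$; this is an inner automorphism of the four-dimensional associative algebra $\Fix(\tau')$, and it is not automatic that it is induced by an element of $\CCentr_{\AAut(\cC,\cdot,n)}(\tau')$. That is exactly the point where the paper, instead of conjugating the second \'etale subalgebra onto the first, carries out the long computation with the cubic semilinear form $g$ of \eqref{eq:g} and formula \eqref{eq:Phi_g} to compare the invariants directly. Your claim is in fact true, but for a characteristic-$3$-specific reason that you must supply: $\ad_m$ is a derivation of $(\cC,\cdot,n)$ (because $d_{x,y}=\ad_{[x,y]^\cdot}+3[L_x,R_y]$, cf. Remark \ref{re:Der_char3}), the automorphism $\exp(\ad_m)$ commutes with $\tau'$ (it is an element of the subgroup $\bfK$ appearing in Theorem \ref{th:tauKa}), and, since $m^{\cdot 2}=0$ and $\Fix(\tau')$ is associative, its restriction to $\Fix(\tau')$ is exactly $y\mapsto(1+m)\cdot y\cdot(1+m)^{-1}$. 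With that lemma in place your reduction to $\psi(\cK)=\cK'$ is valid, and the remaining transition-matrix argument (an element of $\cK'[\tau']$ has $\cK'$-determinant equal to the cube of its coefficient of $\id$, whence $\varphi(a)=c_0^{\cdot 3}\cdot a'$) is correct and noticeably shorter than the paper's. As written, however, the bare appeal to Wedderburn--Malcev leaves a genuine gap at the crux of the ``only if'' direction.
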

\begin{proof} 
Let $\cK$ be a quadratic \'etale subalgebra whose elements are fixed by $\tau$. Let $\cW=\cK^\perp$ and consider $\sigma$, $\times$ and $\Phi$ as above. We will follow several steps.

\smallskip

\noindent\textbf{(i)}\quad There is an element $u\in\cW$ such that $\{u,\tau(u),\tau^2(u)\}$ is a $\cK$-basis of $\cW$:

This is trivial if $\cK$ is a field. Otherwise, $\cK=\FF e_1\oplus\FF e_2$, with $e_i^{\cdot 2}=e_i$, $i=1,2$, $e_1\cdot e_2=0=e_2\cdot e_1$. The Peirce subspaces $\cU=e_1\cdot \cC\cdot e_2$ and $\cV=e_2\cdot \cC\cdot e_1$ are invariant under $\tau$. Since $\cU$ and $\cV$ are isotropic subspaces coupled by the norm, and $\tau$ is an orthogonal transformation, the minimal polynomial of $\tau\vert_{\cU}$ and $\tau\vert_{\cV}$ is $T^3-1$. If $x\in\cU$ and $y\in\cV$ are elements such that $\{x,\tau(x),\tau^2(x)\}$ is a basis of $\cU$ and $\{y,\tau(y),\tau^2(y)\}$ is a basis of $\cV$, then we may take $u=x+y$. We also check with this argument that the Segre symbol of $\tau$ is $(3^2,1^2)$.

Moreover, if $\FF$ is algebraically closed, we may take a basis $\{u_1,u_2,u_3\}$ of $\cU$ with $n(u_1,u_2\cdot u_3)=1$ and $\tau(u_i)=u_{i+1}$ (indices modulo $3$). This shows that $\cC_\tau$ is the split Okubo algebra over $\FF$ (Definition \ref{df:Okubo}). If $\FF$ is not algebraically closed, extend scalars to the algebraic closure to show that $\cC_\tau$ is an Okubo algebra over $\FF$.

\smallskip

\noindent\textbf{(ii)}\quad With $\delta=\tau-\id$, for any $u\in\cW$ as in \textbf{(i)},
\[
0\ne n\bigl(\Phi(u,\tau(u),\tau^2(u)\bigr)=-n\bigl(\delta(u)\bigr)^3:
\]
Note first that, by linearity on each component, $\Phi\bigl(u,\tau(u),\tau^2(u)\bigr)=\Phi\bigl(u,\delta(u),\delta^2(u)\bigr)$. Now, for $x,y\in\cW$, as $\sigma$ is invariant under $\tau$,
\[
\begin{split}
\sigma\bigl(\delta(x),y\bigr)&=\sigma\bigl(\tau(x),y\bigr)-\sigma(x,y)\\
  &=\sigma\bigl(x,\tau^2(y)\bigr)-\sigma(x,y)=\sigma\bigl(x,(\delta^2-\delta)(y)\bigr).
  \end{split}
\]
Therefore, as $\delta^3=0$, for any $x,y\in\cW$,
\begin{equation}\label{eq:sigma_delta}
\sigma\bigl(\delta(x),\delta^2(y)\bigr)=0,\quad \sigma\bigl(\delta(x),\delta(y)\bigr)=-\sigma\bigl(x,\delta^2(y)\bigr).
\end{equation}
Thus, if $\alpha=n\bigl(\delta(u)\bigr)=\sigma\bigl(\delta(u),\delta(u)\bigr)$, we get $\sigma\bigl(u,\delta^2(u)\bigr)=-\alpha=\sigma\bigl(\delta^2(u),u\bigr)$, and 
\[
\begin{split}
0&\neq n\Bigl(\Phi\bigl(u,\tau(u),\tau^2(u)\bigr)\Bigr)
   =n\Bigl(\Phi\bigl(u,\delta(u),\delta^2(u)\bigr)\Bigr)\\
   &=\det\Bigl(\sigma\bigl(\delta^i(u),\delta^j(u)\bigr)\Bigr)_{0\leq i,j\leq 2}
    =\det\begin{pmatrix}*&*&-\alpha\\ *&\alpha&0\\ -\alpha&0&0\end{pmatrix}=-\alpha^3,
\end{split}
\]
as required.

\smallskip

\noindent\textbf{(iii)}\quad If $u$ and $v$ are two elements as in \textbf{(i)}, then 
\[
\Phi\bigl(v,\tau(v),\tau^2(v)\bigr)\in\cK^{\cdot 3}\cdot\Phi\bigl(u,\tau(u),\tau^2(u)\bigr):
\]

Indeed, given $u$ as in \textbf{(i)}, any other such $v$ is of the form $v=a\cdot u+\delta(w)$, for some $a\in\cK$ and $w\in\cW$. Then, since $\Phi$ is alternating,
\begin{multline*}
\Phi\bigl(v,\tau(v),\tau^2(v)\bigr)=\Phi\bigl(v,\delta(v),\delta^2(v)\bigr)\\
  =\Phi(a\cdot u,\delta(a\cdot u),\delta^2(a\cdot u)\bigr)=a^{\cdot 3}\cdot\Phi\bigl(u,\delta(u),\delta^2(u)\bigr).
  \end{multline*}
  
  \smallskip
  
\noindent\textbf{(iv)}\quad There exists an element $u$ as in \textbf{(i)} satisfying
\[
\sigma\bigl(\tau^i(u),\tau^i(u)\bigr)=0,\quad 
\sigma\bigl(\tau^i(u),\tau^j(u)\bigr)=-1,\quad\text{for $1\leq i\neq j\leq 3$:}
\]

This has been essentially done in \cite[p.~107]{EP96}. Here a slightly different argument will be used.

Let $u$ be an element as in \textbf{(i)} and let $a=\Phi\bigl(u,\tau(u),\tau^2(u)\bigr)$ and $\alpha=n\bigl(\delta(u)\bigr)$. By \textbf{(ii)}, $n(a)=-\alpha^3$. Take $\tilde u=(\alpha a^{-1})\cdot u$, then
\[
n\bigl(\delta(\tilde u)\bigr)=n(\alpha a^{-1})n\bigl(\delta(u)\bigr)=\frac{\alpha^2}{n(a)}\alpha=-1.
\]
Therefore we may assume that $n\bigl(\delta(u)\bigr)=-1$ and hence, by \eqref{eq:sigma_delta}, $\sigma\bigl(u,\delta^2(u)\bigr)=1$. For $a,b\in\cK$, consider the element $u'=u+a\cdot\delta(u)+b\cdot\delta^2(u)\in\cW$. Then $\delta(u')=\delta(u)+a\cdot \delta^2(u)$ so, using \eqref{eq:sigma_delta}, we have
\[
n\bigl(\delta(u')\bigr)=\sigma\bigl(\delta(u'),\delta(u')\bigr)
 =\sigma\bigl(\delta(u),\delta(u)\bigr)=n\bigl(\delta(u)\bigr)=-1.
\]
Hence $\sigma\bigl(u',\delta^2(u')\bigr)=1$. Now,
\[
\begin{split}
\sigma\bigl(u',\delta(u')\bigr)
 &=\sigma\bigl(u+a\cdot\delta(u)+b\cdot\delta^2(u),\delta(u)+a\cdot\delta^2(u)\bigr)\\
 &=\sigma\bigl(u,\delta(u)\bigr)+\sigma\bigl(u,\delta^2(u)\bigr)\cdot \bar a+a\cdot\sigma\bigl(\delta(u),\delta(u)\bigr)\\
 &=\sigma\bigl(u,\delta(u)\bigr)+(a-\bar a)\cdot \sigma\bigl(\delta(u),\delta(u)\bigr)\quad\text{(by \eqref{eq:sigma_delta})}\\
 &=\sigma\bigl(u,\delta(u)\bigr)+(a-\bar a)n\bigl(\delta(u)\bigr)=\sigma\bigl(u,\delta(u)\bigr)-(a-\bar a),
\end{split}
\]
and we may take $a$ so that $\sigma\bigl(u',\delta(u')\bigr)\in\FF$. But then we have
\[
\sigma\bigl(u',\delta(u')\bigr)=\sigma\bigl(\delta(u'),u'\bigr)=
 \sigma\bigl(u',(\delta^2-\delta)(u'\bigr)=1-\sigma\bigl(u',\delta(u')\bigr),
\]
so $\sigma\bigl(u',\delta(u')\bigr)=-1$. Finally,
\[
\begin{split}
\sigma(u',u')&=\sigma\bigl(u+a\cdot\delta(u),u+a\cdot\delta(u)\bigr)+
   \sigma\bigl(u,b\cdot\delta^2(u)\bigr)+\sigma\bigl(b\cdot\delta^2(u),u\bigr)\\
   &=n\bigl(u+a\cdot\delta(u)\bigr)+(b+\bar b),
\end{split}
\]
and we can take $b$ so that $\sigma(u',u')=0$. Therefore, the coordinate matrix of $\sigma$ in the $\cK$-basis $\{u',\delta(u'),\delta^2(u')\}$ is $\left(\begin{smallmatrix} 0&-1&1\\ -1&-1&0\\ 1&0&0\end{smallmatrix}\right)$, so that the coordinate matrix in the $\cK$-basis $\{u',\tau(u'),\tau^2(u')\}$ is $\left(\begin{smallmatrix} 0&-1&-1\\ -1&0&-1\\ -1&-1&0\end{smallmatrix}\right)$, as required. (For instance, $\sigma\bigl(u',\tau^2(u')\bigr)=\sigma\bigl(u',(\delta^2-\delta+\id)(u')\bigr) =1-(-1)+0=-1$.)

\smallskip

Now, if $u$ is an element as in \textbf{(iv)}, let $w_i=\tau^i(u)$ and we are in the situation of Example \ref{ex:tauKa}, with $a=\Phi\bigl(u,\tau(u),\tau^2(u)\bigr)$, so that $\tau$ is, up to conjugation, the automorphism $\tau_{\cK,a}$.

\medskip

If $\cK$ and $\cK'$ are quadratic \'etale subalgebras of $(\cC,\cdot,n)$, $a\in\cK$, $a'\in\cK'$ are norm $1$ elements and there is an isomorphism $\varphi:\cK\rightarrow\cK'$ and an element $b'\in\cK'$ such that $\varphi(a)=(b')^{\cdot 3}\cdot a'$, then $n(b')=1$. The automorphism $\tau=\tau_{\cK,a}$ is obtained by taking a $\cK$-basis $\{w_1,w_2,w_3\}$ of $\cK^\perp$ with $\sigma(w_i,w_i)=0$, $\sigma(w_i,w_j)=-1$ for $1\leq i\neq j\leq 3$, such that $\Phi(w_1,w_2,w_3)=a$ and $\tau(w_i)=w_{i+1}$ (indices modulo $3$). Similarly, denoting by $\sigma'$, $\Phi'$ the corresponding maps on $(\cK')^\perp$, there is a $\cK'$-basis $\{w_1',w_2',w_3'\}$ of $(\cK')^\perp$ with $\sigma'(w_i',w_i')=0$, $\sigma'(w_i',w_j')=-1$ for $1\leq i\neq j\leq 3$, such that $\Phi'(w_1',w_2',w_3')=a'$ and $\tau(w_i')=w_{i+1}'$ (indices modulo $3$). Then $\{b'\cdot w_1',b'\cdot w_2',b'\cdot w_3'\}$ is another $\cK'$-basis of $(\cK')^\perp$ and, since $n(b')=1$, it also satisfies $\sigma'(b'\cdot w_i',b'\cdot w_i')=0$, $\sigma'(b'\cdot w_i',b'\cdot w_j')=-1$, for $1\leq i\neq j\leq 3$, while $\Phi'(b'\cdot w_1',b'\cdot w_2',b'\cdot w_3')=(b')^{\cdot 3}\cdot a'=\varphi(a)$. Therefore, the isomorphism $\varphi:\cK\rightarrow\cK'$ extends to an automorphism of $(\cC,\cdot,n)$ by $\varphi(w_i)=b'\cdot w_i'$, $1\leq i\leq 3$, and $\varphi\circ\tau=\tau'\circ\varphi$.

Conversely, assume that the order $3$ automorphisms $\tau=\tau_{\cK,a}$ and $\tau'=\tau_{\cK',a'}$ are conjugate. Then there is an element $u\in\cW=\cK^\perp$ such that $\{u,\tau(u),\tau^2(u)\}$ is a $\cK$-basis of $\cW$ in which the coordinate matrix of $\sigma$ is  $\left(\begin{smallmatrix} 0&-1&-1\\ -1&0&-1\\ -1&-1&0\end{smallmatrix}\right)$ and $\Phi\bigl(u,\tau(u),\tau^2(u)\bigr)=a$ and similarly for $\tau'$. Let $\phi\in\Aut(\cC,\cdot,n)$ be such that $\phi\circ\tau'=\tau\circ\phi$. Then $\tilde\cK=\phi(\cK')$ is a quadratic \'etale subalgebra fixed elementwise by $\tau$: $\tilde\cK\subseteq \Fix(\tau)=\cK\oplus\delta^2(\cW)$. In $(\tilde\cK)^\perp$, the element $\tilde u=\phi(u')$ satisfies that $\{\tilde u,\tau(\tilde u),\tau^2(\tilde u)\}$ is a $\tilde\cK$-basis of $\tilde\cW=(\tilde\cK)^\perp$, and $\tilde\Phi\bigl(\tilde u,\tau(\tilde u),\tau^2(\tilde u)\bigr)=\tilde a=\phi(a')$. Then we must prove that there is an isomorphism $\varphi:\cK\rightarrow \tilde\cK$ such that $\varphi(a)\in(\tilde\cK)^{\cdot 3}\cdot\tilde a$, as then $\phi^{-1}\circ\varphi$ gives an isomorphism $\cK\rightarrow\cK'$ that takes $a$ to $(\cK')^{\cdot 3}\cdot a'$.

Let $0\ne k\in\cK$ orthogonal to $1$, so $\cK=\FF 1\oplus\FF k$. For any $x,y\in\cW$,
\[
\begin{split}
&n(x,y)=-n(x\cdot y,1)=n\bigl(\sigma(x,y),1\bigr),\\
&n(k\cdot x,y)=n(x\cdot y,k)=-n\bigl(\sigma(x,y)=-n\bigl(\sigma(x,y),k\bigr).
\end{split}
\]
Then,
\begin{equation}\label{eq:sigma_n_k}
\sigma(x,y)=-n(x,y)1+\frac{1}{n(k)}n(k\cdot x,y)k.
\end{equation}
The map $g:\cC\rightarrow\FF$ given by
\begin{equation}\label{eq:g}
 g(x)=n\bigl(x,\tau(\bar x)\cdot\tau^2(\bar y)\bigr)
\end{equation} 
is semilinear: $g(x+y)=g(x)+g(y)$, $g(\alpha x)=\alpha^3g(x)$, for any $x,y\in\cC$ and $\alpha\in\FF$ (see \cite[\S 5]{EP96}). Then, for any $x\in\cW$,
\[
\begin{split}
\Phi\bigl(x,\tau(x),\tau^2(x)\bigr)&=
  \sigma\bigl(x,\tau(x)\times \tau^2(x)\bigr)\\
   &=-n\bigl(x,\tau(x)\times\tau^2(x)\bigr)1+\frac{1}{n(k)}n\bigl(k\cdot x,\tau(x)\times\tau^2(x)\bigr)k\\
   &=-n\bigl(x,\tau(x)\cdot\tau^2(x)\bigr)1+\frac{1}{n(k)}n\bigl(k\cdot x,\tau(x)\cdot\tau^2(x)\bigr)k.
\end{split}
\]
But $(k\cdot y)\times(k\cdot z)=\bar k^2(y\times z)=-n(k)(y\times z)$, so we get:
\begin{equation}\label{eq:Phi_g}
\Phi\bigl(x,\tau(x),\tau^2(x)\bigr)=-g(x)1-\frac{1}{n(k)^2}g(k\cdot x)k.
\end{equation}

Now, $\tilde\cK$ is a quadratic \'etale subalgebra of $\Fix(\tau)=\cK\oplus\delta^2(\cW)=\cK\oplus\cK\cdot\delta^2(u)$. Since $n\bigl(\delta^2(\cW),\Fix(\tau)\bigr)=0$, $\tilde\cK=\FF 1\oplus\FF\tilde k$, with $\tilde k=k+l\cdot\delta^2(u)$ for some $l\in\cK$. Let $v=u+\frac{l+\bar l}{n(k)}k$. Then $n(v,1)=0$ and 
\[
n(\tilde k,v)=-(l+\bar l)+n\bigl(l\cdot\delta^2(u),u\bigr).
\]
But $\sigma\bigl(l\cdot\delta^2(u),u\bigr)=l\cdot\sigma\bigl(\delta^2(u),u\bigr)=l$ (recall that the coordinate matrix of $\sigma$ in the $\cK$-basis $\{u,\delta(u),\delta^2(u)\}$ is $\left(\begin{smallmatrix} 0&-1&1\\ -1&-1&0\\ 1&0&0\end{smallmatrix}\right))$, so
\[
n\bigl(l\cdot\delta^2(u),u\bigr)=\sigma\bigl(l\cdot\delta^2(u),u\bigr)+\sigma\bigl(u,l\cdot\delta^2(u)\bigr)=l+\bar l,
\]
and hence $n(\tilde k,v)=0$. Thus $v\in\tilde\cW=(\tilde\cK)^\perp$. By step \textbf{(iii)} above, $\tilde\Phi\bigl(v,\tau(v),\tau^2(v)\bigr)\in(\tilde\cK)^{\cdot 3}\cdot \tilde a$. As for \eqref{eq:Phi_g}, we have
\begin{equation}\label{eq:Phi_tilde_g}
\begin{split}
\tilde\Phi\bigl(v,\tau(v),\tau^2(v)\bigr)&=-g(v)1-\frac{1}{n(\tilde k)^2}g(\tilde k\cdot v)\tilde k\\
  &=-g(v)1-\frac{1}{n(k)^2}g(\tilde k\cdot v)\tilde k.
\end{split}
\end{equation}
On the other hand, $g(v)=g(u)$, because $g(k)=n(k,k^{\cdot 2})=0$, so 
\begin{equation}\label{eq:ktilde_v}
\begin{split}
\tilde k\cdot v&=\bigl(k+l\cdot\delta^2(v)\bigr)\cdot \Bigl(u+\frac{1}{n(k)}(l+\bar l)\cdot k\Bigr)\\
 &=k\cdot u-(l+\bar l)+\bigl(l\cdot\delta^2(u)\bigr)\cdot u+
   \bigl(l\cdot\delta^2(u)\bigr)\Bigl(\frac{1}{n(k)}(l+\bar l)\cdot k\Bigr).
\end{split}
\end{equation}
For any $x\in\cC$, $g(x)=g\bigl(\tau(x)\bigr)$, so $g\bigl(\delta(\cW)\bigr)=0$. In particular, the last term above is in $\delta^2(\cW)\subseteq\delta(\cW)$, so it is sent to $0$ by $g$. Also, $g(l+\bar l)=2g(l)=-g(l)$, and 
\[
\begin{split}
\bigl(l\cdot\delta^2(u)\bigr)\cdot u
 &=-\sigma\bigl(l\cdot\delta^2(u),u\bigr)+\bigl(l\cdot\delta^2(u)\bigr)\times u\\
 &=-l\cdot\sigma\bigl(\delta^2(u),u\bigr)+\bar l\cdot\bigl(\delta^2(u)\times u\bigr)\\
 &=-l+\bar l\cdot\bigl(\delta^2(u)\times u\bigr).
\end{split}
\]
As $\sigma\bigl(\delta^2(u),\delta^2(u)\times u\bigr)=\Phi\bigl(\delta^2(u),\delta^2(u),u\bigr)=0$, because $\Phi$ is alternating, 
\[
\delta^2(u)\times u\in\{x\in\cW : \sigma\bigl(\delta^2(u),x\bigr)=0\}=\delta(\cW),
\]
so $\bar l\cdot\bigl(\delta^2(u)\times u\bigr)\in\delta(\cW)$, $g\Bigl(\bar l\cdot\bigl(\delta^2(u)\times u\bigr)\Bigr)=0$, and $g\Bigl(\bigl(l\cdot\delta^2(u)\bigr)\cdot u\Bigr)=-g(l)$. 

Therefore, using \eqref{eq:ktilde_v}, $g(\tilde k\cdot v)=g(k\cdot u)-2g(l)-g(l)=g(k\cdot u)$, so that, by \eqref{eq:Phi_tilde_g}, we get
\[
\tilde\Phi\bigl(v,\tau(v),\tau^2(v)\bigr)=-g(u)-\frac{1}{n(k)}g(k\cdot u)\tilde k,
\]
and the isomorphism $\varphi:\cK\rightarrow \tilde\cK$ taking $k$ to $\tilde k$ satisfies that $\varphi(a)=\tilde\Phi\bigl(v,\tau(v),\tau^2(v)\bigr)$ belongs to $(\tilde\cK)^{\cdot 3}\cdot \tilde a$, as required.
\end{proof}

\begin{example}\label{ex:tau_Ka2}
Let $(\cC,\cdot,n)$ be the split Cayley algebra over a field $\FF$ of characteristic $3$, and let $\cK$ be a subalgebra isomorphic to $\FF\times\FF$, so $\cK=\FF e_1\oplus\FF e_2$, for orthogonal idempotents $e_1$ and $e_2$. For $0\ne \alpha,\beta\in\FF$, the elements $a=\alpha e_1+\alpha^{-1}e_2$ and $b=\beta e_1+\beta^{-1}e_2$ have norm $1$ and the automorphisms $\tau_{\cK,a}$ and $\tau_{\cK,b}$ are conjugate if and only if there is an automorphism $\varphi$ of $\cK$ such $\varphi(a)\in\cK^{\cdot 3}\cdot b$, if and only if there is a nonzero scalar $0\ne\mu\in\FF$ such that either $\alpha=\mu^3\beta$ or $\alpha=\mu^3\beta^{-1}$, if and only if $\FF^3\alpha\cup\FF^3\alpha^{-1}=\FF^3\beta\cup\FF^3\beta^{-1}$. If $\FF$ is perfect, we conclude that all these automorphisms are conjugate.
\end{example}

\begin{corollary}\label{co:Fperfect_tau_Ka}
Let $(\cC,\cdot,n)$ be the split Cayley algebra over a perfect field $\FF$ of characteristic $3$. Let $\cK$ and $\cK'$ be quadratic \'etale subalgebras, and let $a\in\cK$ and $a'\in\cK'$ be norm $1$ elements. Then the automorphisms $\tau_{\cK,a}$ and $\tau_{\cK',a'}$ are conjugate if and only if $\cK$ is isomorphic to $\cK'$. In particular, $\tau_{\cK,a}$ is conjugate to $\tau_{\cK,1}$ and, if $\FF$ is algebraically closed, all these automorphisms are conjugate.
\end{corollary}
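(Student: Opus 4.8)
The plan is to deduce the statement directly from the conjugacy criterion of Theorem~\ref{th:quadratic_auto}, which asserts that $\tau_{\cK,a}$ and $\tau_{\cK',a'}$ are conjugate if and only if there is an isomorphism $\varphi\colon\cK\to\cK'$ with $\varphi(a)\in(\cK')^{\cdot 3}\cdot a'$. The ``only if'' direction is then immediate: conjugacy forces the existence of such a $\varphi$, and in particular $\cK\cong\cK'$.

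For the ``if'' direction I would argue that, over a perfect field of characteristic $3$, the cube condition in Theorem~\ref{th:quadratic_auto} is automatically met because every element of $\cK'$ is a cube. The point is that in characteristic $3$ the map $x\mapsto x^{\cdot 3}$ is the Frobenius endomorphism, and $\FF$ being perfect says exactly that this map is surjective on $\FF$, i.e.\ $\FF^{\cdot 3}=\FF$. First I would split into the two possibilities for a quadratic \'etale algebra: if $\cK'\cong\FF\times\FF$, cubing acts componentwise and is surjective because $\FF^{\cdot 3}=\FF$; if $\cK'$ is a (separable) quadratic field extension of $\FF$, then $\cK'$ is a finite algebraic extension of the perfect field $\FF$ and hence itself perfect, so cubing is again surjective. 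In either case $(\cK')^{\cdot 3}=\cK'$.

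With this observation, the rest is formal. Choose any isomorphism $\varphi\colon\cK\to\cK'$, which exists since $\cK\cong\cK'$. Then $\varphi(a)\in\cK'=(\cK')^{\cdot 3}=(\cK')^{\cdot 3}\cdot a'$, where the last equality uses that $a'$, having norm $1$ in an \'etale algebra, is invertible. Theorem~\ref{th:quadratic_auto} now gives that $\tau_{\cK,a}$ and $\tau_{\cK',a'}$ are conjugate. Specializing to $\cK'=\cK$ and $a'=1$ yields that $\tau_{\cK,a}$ is conjugate to $\tau_{\cK,1}$; and if $\FF$ is algebraically closed the only quadratic \'etale algebra is $\FF\times\FF$, so all the $\tau_{\cK,a}$ lie in a single conjugacy class.

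There is no real obstacle once Theorem~\ref{th:quadratic_auto} is granted; the only step needing attention is the surjectivity of cubing on $\cK'$, that is, checking that the \'etale algebra inherits perfectness from $\FF$. This is precisely where the perfectness hypothesis is essential: over an imperfect field $(\cK')^{\cdot 3}$ can be a proper subset, and the conjugacy class of $\tau_{\cK,a}$ then genuinely depends on $a$ modulo cubes, which is why the non-perfect case must be handled separately later.
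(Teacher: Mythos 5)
Your proof is correct and follows essentially the same route as the paper: both reduce to the conjugacy criterion of Theorem~\ref{th:quadratic_auto} and observe that perfectness of $\FF$ makes the cube condition $\varphi(a)\in(\cK')^{\cdot 3}\cdot a'$ automatic, since $(\cK')^{\cdot 3}=\cK'$ in both the split case and the separable field-extension case. The only cosmetic difference is that the paper handles the split case by citing the explicit computation of Example~\ref{ex:tau_Ka2}, whereas you treat it uniformly via componentwise surjectivity of the Frobenius; both amount to the same observation.
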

\begin{proof} 
If $\varphi:\cK\rightarrow\cK'$ is an isomorphism, then either both $\cK$ and $\cK'$ are split, and we are in the situation of Example \ref{ex:tau_Ka2}, or $\cK$ and $\cK'$ are perfect fields, so $\varphi(a)\in\cK'=(\cK')^{\cdot 3}\cdot a'$.
\end{proof}

\bigskip

%--------------------------------------------
% SECTION: Centralizers in characteristic 3
%----------------------------------------------

\section{Centralizers of order $3$ automorphisms, $\charac\FF=3$}\label{se:centralizers_3}

In this section the centralizers of the different order $3$ automorphisms in Theorem \ref{th:char3} will be computed.

For the automorphism in Theorem \ref{th:char3}.(1), this has been done in \cite[\S 10]{CEKT} as part of the computation of the algebraic group of automorphisms of the split Okubo algebra, which is not smooth. Here a different approach will be used, which sheds light on the corresponding results in \cite{CEKT}.

Actually, items (1) and (3) in Theorem \ref{th:char3} can be dealt with together. The notation of Section \ref{se:Zorn_autos} will be used throughout.

\begin{theorem}\label{th:parabolics}
Let $(\cC,\cdot,n)$ be the split Cayley algebra over a field $\FF$ of characteristic $3$.
\begin{enumerate}
\item For the order $3$ automorphism $\tau$ of $(\cC,\cdot,n)$ in item \textup{(1)} of Theorem \ref{th:char3}, $\CCentr_{\AAut(\cC,\cdot,n)}(\tau)$ is the derived subgroup of the parabolic subgroup $\bfP_{\{\veps_1\}}$.
\item  For the order $3$ automorphism $\tau$ of $(\cC,\cdot,n)$ in item \textup{(3)} of Theorem \ref{th:char3}, $\CCentr_{\AAut(\cC,\cdot,n)}(\tau)$ is the derived subgroup of the parabolic subgroup $\bfP_{\{\veps_2-\veps_1\}}$.
\end{enumerate}
\end{theorem}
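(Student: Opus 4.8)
The plan is to write $\tau=\exp(x_\alpha)$ for the root $\alpha=\veps_2-\veps_3$ (the highest, long root) in case (1) and $\alpha=-\veps_3$ (a short root) in case (2), and then to determine $\CCentr_{\AAut(\cC,\cdot,n)}(\tau)$ in two movements: first confine it inside the maximal parabolic $\bfP=\bfP_{\{\gamma\}}$ (with $\gamma=\veps_1$ in case (1) and $\gamma=\veps_2-\veps_1$ in case (2)), and then compute the centralizer inside $\bfP$ via the Levi decomposition $\bfP=\bfL_{\{\gamma\}}\ltimes\textup{R}_u(\bfP)$. I grade $\frg=\Der(\cC,\cdot,n)$ and $\cC$ by the coefficient of the simple root different from $\gamma$, so that $\Lie\bigl(\textup{R}_u(\bfP)\bigr)=\bigoplus_{i>0}\frg_i$; then $x_\alpha$ lies in the one–dimensional layer $\frg_2$ in both cases (in case (1) it is the top layer, in case (2) it is not).

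To show $\CCentr(\tau)\subseteq\bfP$, I note that every point of the centralizer commutes with $\tau-\id$, hence preserves $\im(\tau-\id)^k$ for all $k$. In case (1), $\im(\tau-\id)=\espan{u_2,v_3}$ is a two–dimensional totally isotropic subspace with zero multiplication, and it is the top graded layer $\cC_1$ of $\cC$; in case (2), $(\tau-\id)^2=\delta^2$ with $\delta=-\ad v_3$, and $\im(\tau-\id)^2=\espan{v_3}$ is an isotropic line, the top layer $\cC_2$. In either case $\bfP$ stabilizes this layer (its unipotent radical annihilates it and its Levi respects the grading), so the stabilizer of the layer is a parabolic containing $\bfB$; being proper — the action on $\cC_0=(\FF 1)^\perp$ is irreducible — and containing $\bfP$, it must by maximality equal $\bfP$. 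This gives $\CCentr(\tau)\subseteq\bfP$ as affine group schemes.

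Inside $\bfP$ the element $\tau$ lies in $\textup{R}_u(\bfP)$, and the first task is to see that it is centralized by all of $\textup{R}_u(\bfP)$. In case (1) this is automatic, because $x_\alpha$ spans the top layer and $[x_\alpha,\frg_i]\subseteq\frg_{i+2}=0$ for $i>0$. Case (2) is where characteristic $3$ is essential: although $\alpha=-\veps_3$ is not the top layer, the brackets $[x_{\veps_1},x_{-\veps_3}]$ and $[x_{\veps_2},x_{-\veps_3}]$ — which a priori lie in the root spaces of $\veps_1-\veps_3$ and $\veps_2-\veps_3$ and are nonzero in characteristic $0$ — have structure constants divisible by $3$ and therefore vanish; this is checked by a direct evaluation on $u_3$ using Table \ref{ta:good_basis}. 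Hence $\tau$ commutes with every root subgroup of $\textup{R}_u(\bfP)$, so $\textup{R}_u(\bfP)\subseteq\CCentr(\tau)$, and for $g=\ell r$ with $\ell\in\bfL_{\{\gamma\}}$ and $r\in\textup{R}_u(\bfP)$ one has $g\tau g^{-1}=\ell\tau\ell^{-1}$. Thus $\CCentr_{\bfP}(\tau)=\CCentr_{\bfL_{\{\gamma\}}}(\tau)\ltimes\textup{R}_u(\bfP)$.

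Finally, $\bfL_{\{\gamma\}}$ acts on the line $\frg_\alpha=\espan{x_\alpha}$ through a character $c$, and $\ell\tau\ell^{-1}=\exp\bigl(c(\ell)x_\alpha\bigr)$ equals $\tau$ precisely when $c(\ell)=1$; so $\CCentr_{\bfL_{\{\gamma\}}}(\tau)=\ker c$. Because $\alpha$ is orthogonal to $\gamma$ (i.e.\ $\langle\alpha,\gamma^\vee\rangle=0$), the subgroup $\SSL_2=[\bfL_{\{\gamma\}},\bfL_{\{\gamma\}}]$ generated by $\bfU_{\pm\gamma}$ acts trivially on $\frg_\alpha$, so $c$ kills $[\bfL_{\{\gamma\}},\bfL_{\{\gamma\}}]$ while $c|_{\bfT}=\alpha$. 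The decisive point is that $\alpha$ (namely $\veps_2-\veps_3$, resp.\ $-\veps_3$) is a \emph{primitive} generator of the rank–one lattice $\{\chi\in X^*(\bfT):\langle\chi,\gamma^\vee\rangle=0\}$, so $c$ descends to an isomorphism $\bfT/\gamma^\vee(\bfG_m)\xrightarrow{\ \sim\ }\bfG_m$; therefore $\ker c=[\bfL_{\{\gamma\}},\bfL_{\{\gamma\}}]$ as a group scheme, with no infinitesimal part. Assembling, $\CCentr(\tau)=\CCentr_{\bfP}(\tau)=[\bfL_{\{\gamma\}},\bfL_{\{\gamma\}}]\ltimes\textup{R}_u(\bfP)=[\bfP_{\{\gamma\}},\bfP_{\{\gamma\}}]$. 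The main obstacle is precisely the characteristic–$3$ collapse in case (2): in other characteristics $x_{-\veps_3}$ has a six–dimensional centralizer, and it is only the vanishing of the two structure constants above that enlarges it to the eight–dimensional $[\bfP_{\{\veps_2-\veps_1\}},\bfP_{\{\veps_2-\veps_1\}}]$ and — through the primitivity of $-\veps_3$ — keeps it smooth; one must also run the Levi computation functorially so that the conclusion is an equality of group schemes rather than merely of groups of rational points.
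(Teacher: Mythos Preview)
Your argument is correct and its overall architecture matches the paper's: write $\tau=\exp(x_\alpha)$ with $\alpha=\veps_2-\veps_3$ (resp.\ $-\veps_3$), sandwich $\CCentr(\tau)$ between $[\bfP_{\{\gamma\}},\bfP_{\{\gamma\}}]$ and $\bfP_{\{\gamma\}}$, and then pin it down. The execution, however, differs in two places worth noting.

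\emph{Confinement in the parabolic.} You argue via the invariant flags $\im(\tau-\id)^k\subseteq\cC$, identifying them with top graded pieces of $\cC$ for the grading by the ``other'' simple root, and then invoke maximality of $\bfP_{\{\gamma\}}$. The paper instead converts $\varphi\tau=\tau\varphi$ directly into $\Ad_\varphi(\delta)=\delta$; in case~(2) this becomes the very concrete condition $\varphi(v_3)=v_3$ (because $[\delta',\ad_{v_3}]=\ad_{\delta'(v_3)}$ and $\ad_x=0$ forces $x\in\FF 1$). So the paper works with the line $\frg_\alpha$ in the Lie algebra (case~(1)) or with the line $\FF v_3$ in $\cC$ (case~(2)), and identifies its \emph{stabilizer} $\tilde\bfH$ as the parabolic; your route via images in $\cC$ reaches the same parabolic by a slightly different geometric handle.

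\emph{Computation inside the parabolic.} You use the Levi decomposition and the character $c$ of $\bfL_{\{\gamma\}}$ on $\frg_\alpha$, checking that $\alpha$ is a primitive generator of $\{\chi:\langle\chi,\gamma^\vee\rangle=0\}$ so that $\ker c=[\bfL_{\{\gamma\}},\bfL_{\{\gamma\}}]$ with no infinitesimal residue. The paper's device is terser: it shows $\tilde\bfH=\bfT\bfH$ and observes $\bfT\cap\bfH=\ker\alpha$ is already the maximal torus of $[\bfP_{\{\gamma\}},\bfP_{\{\gamma\}}]$, which forces $\bfH=[\bfP_{\{\gamma\}},\bfP_{\{\gamma\}}]$. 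Your primitivity check is precisely what makes that intersection a torus rather than a torus times a $\boldmu_p$; so the two arguments are the same fact seen from the character side versus the cocharacter side.

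One point where your write-up is more transparent than the paper: in case~(2) you make the characteristic-$3$ collapse explicit, via $N_{\veps_1,-\veps_3}=\pm 3=N_{\veps_2,-\veps_3}$, which is exactly why $\textup{R}_u(\bfP_{\{\veps_2-\veps_1\}})$ centralizes $\tau$. The paper's corresponding line ``$\alpha+\veps_3$ is not a root, so $[\delta',\delta]=0$'' is really using that $\alpha-\veps_3$ is never a nonzero \emph{weight of $\cC$} (so $\delta'(v_3)=0$), which is a characteristic-free statement about the $7$-dimensional module rather than the $14$-dimensional adjoint; your formulation makes the dependence on $p=3$ visible.
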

\begin{proof}
For simplicity write $\bfG=\AAut(\cC,\cdot,n)$, $\cL=\Der(\cC,\cdot,n)$.

The automorphism in Theorem \ref{th:char3}.(1) is $\tau=\exp(\delta)$, where $\delta=[L_{u_2},R_{v_3}]$ is in the root space $\cL_{\veps_2-\veps_3}$. Note that $\delta^2=0$. More precisely,
\[
\begin{split}
&\delta(e_1)=\delta(e_2)=0,\\
&\delta(u_1)=0=\delta(u_2),\ \delta(u_3)=u_2\cdot(u_3\cdot v_3)-(u_2\cdot u_3)\cdot v_3=u_2,\\
&\delta(v_1)=0=\delta(v_3),\ \delta(v_2)=-v_3.
\end{split}
\]
Hence $\tau=\exp(\delta)=\id +\delta$. For $\varphi\in\bfG$, $\varphi\tau=\tau\varphi$ if and only if $\Ad_{\varphi}(\delta)=\delta$. Denote by $\bfH$ the centralizer of $\tau$ in $\bfG$, and by $\tilde\bfH$ the stabilizer in $\bfG$ of $\cL_{\veps_2-\veps_3}$, so that $\bfH$ is a subgroup of $\tilde\bfH$. Moreover, the derived subgroup of $\tilde\bfH$ is contained in $\bfH$, as its action on the one-dimensional space $\cL_{\veps_2-\veps_3}$ is trivial.

For any $\delta'\in\cL_{\alpha}$ with $\alpha\in\Phi^+\cup\{-\veps_1\}$, $\alpha+(\veps_2-\veps_3)$ is not a root, so $[\delta',\delta]=0$. Hence the $\bfU_\alpha$'s, with $\alpha\in\Phi^+\cup\{-\veps_1\}$ are contained in $\bfH$, and $\bfT$ and the $\bfU_{\alpha}$'s, with $\alpha\in\Phi^+\cup\{-\veps_1\}$, are contained in $\tilde\bfH$. It follows that $\tilde\bfH$ is the parabolic subgroup $\bfP_{\{\veps_1\}}$, and  $\tilde\bfH=\bfT\bfH$. But $\bfT\cap\bfH=\ker(\veps_2-\veps_3)$ is a maximal torus of the derived subgroup of $\bfP_{\{\veps_1\}}$, and hence $\bfH$ coincides with this derived subgroup.

\smallskip

As for the automorphism in Theorem \ref{th:char3}.(3), $\tau=\exp(\delta)$, where $\delta=-\ad_{v_3}$. For any $R$-point $\varphi\in\bfG(R)$, $\Ad_\varphi(\delta)=\delta$ occurs if and only if $\ad_{v_3}=\ad_{\varphi(v_3)}$ in $\cL\otimes_\FF R$, if and only if $\varphi(v_3)-v_3\in R1$. But $\varphi$ preserves the subspace orthogonal to $1$, so we get $\varphi(v_3)=v_3$. Denote again by $\bfH$ the centralizer of $\tau$ in $\bfG$, so $\bfH$ is the stabilizer of $v_3$, and by $\tilde\bfH$ the stabilizer in $\bfG$ of $\FF v_3$, so that $\bfH$ is a subgroup of $\tilde\bfH$. Moreover, the derived subgroup of $\tilde\bfH$ is contained in $\bfH$.

As before, for any $\delta'\in\cL_{\alpha}$ with $\alpha\in\Phi^+\cup\{\veps_1-\veps_2\}$, $\alpha+\veps_3$ is not a root, so $[\delta',\delta]=0$. Hence the $\bfU_\alpha$'s, with $\alpha\in\Phi^+\cup\{\veps_1-\veps_2\}$ are contained in $\bfH$, and $\bfT$ and the $\bfU_{\alpha}$'s, with $\alpha\in\Phi^+\cup\{\veps_1-\veps_2\}$, are contained in $\tilde\bfH$. It follows that $\tilde\bfH$ is the parabolic subgroup $\bfP_{\{\veps_2-\veps_1\}}$. As above, $\bfH$ coincides with its derived subgroup.
\end{proof}

\begin{remark}
Let $(\cC,\cdot,n)$ be the split Cayley algebra over a field $\FF$ of characteristic $3$, and let $\tau$ be its order $3$ automorphism in item \textup{(1)} of Theorem \ref{th:char3}. Then $e=1$ is the unique quaternionic idempotent of the split Okubo algebra $\cC_\tau=(\cC,*,n)$. By Proposition \ref{pr:fix_cent}, $\SStab_{\AAut(\cC,*,n)}(e)=\CCentr_{\AAut(\cC,\cdot,n)}(\tau)$, and this stabilizer is precisely $\AAut(\cC,*,n)_{\textrm{red}}$ (the largest smooth subgroup of $\AAut(\cC,*,n)$) by \cite[\S 10]{CEKT}. Theorem \ref{th:parabolics} gives a more clear picture of this last group. It turns out (see \cite[\S 11]{CEKT}) that the group of automorphisms $\AAut(\cC,*,n)$ factors as $\SStab_{\AAut(\cC,*,n)}(e)\boldmu_3^2$, where neither $\SStab_{\AAut(\cC,*,n)}(e)$ nor the non smooth subgroup $\boldmu_3^2$ are normal. 

The subgroup $\boldmu_3^2$ corresponds to a grading by $\ZZ_3^2$ of the split Okubo algebra (see \cite{EldGradingsSymmetric} or \cite[\S 4.6]{EK13}) and it is easy to check that it is not contained in the normalizer of any maximal torus. Actually, the maximal tori of $\AAut(\cC,*,n)$ have dimension one, because they are contained in $\AAut(\cC,*,n)_{\textrm{red}}$, which is the derived subgroup of the parabolic subgroup $\bfP_{\{\veps_1\}}$, and if $\boldmu_3$ is in the normalizer of a rank one torus, then it centralizes it. Hence, if $\boldmu_3^2$ were contained in the normalizer of a torus, it would centralize it. But $\boldmu_3^2$ is self-centralized, because the corresponding grading is fine with one-dimensional homogeneous components. 

The fact \cite[Theorem 3.15]{Platonov} that quasitori, and more generally supersolvable subgroups consisting of semisimple elements, are contained in normalizers of maximal tori over algebraically closed fields of characteristic $0$ has been an important tool in the classification of gradings on some exceptional Lie algebras (see, for instance, \cite{CandidoCristina}). The above shows that this is no longer true over fields of prime characteristic.
\end{remark}

The order $3$ automorphism $\tau$ in Theorem \ref{th:char3}.(4) is the composition of the automorphisms in items (1) and (3) of the same Theorem. Therefore, the unipotent radical $\bfU$ of the standard Borel subgroup $\bfB$, which is contained in the derived subgroups of both $\bfP_{\{\veps_1\}}$ and $\bfP_{\{\veps_2-\veps_1\}}$, centralizes $\tau$. Actually, it is the whole $\CCentr_{\AAut(\cC,\cdot,n)}(\tau)$.

\begin{theorem}\label{th:U}
Let $(\cC,\cdot,n)$ be the split Cayley algebra over a field $\FF$ of characteristic $3$. Then
the centralizer of the order $3$ automorphism of $(\cC,\cdot,n)$ in item \textup{(4)} of Theorem \ref{th:char3} is the unipotent radical $\bfU$ of the standard Borel subgroup $\bfB$.
\end{theorem}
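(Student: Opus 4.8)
The plan is to prove the two inclusions $\bfU\subseteq\CCentr_{\AAut(\cC,\cdot,n)}(\tau)$ and $\CCentr_{\AAut(\cC,\cdot,n)}(\tau)\subseteq\bfU$. The first is already established in the discussion preceding the statement, since $\bfU$ lies in the derived subgroups of both $\bfP_{\{\veps_1\}}$ and $\bfP_{\{\veps_2-\veps_1\}}$. For the reverse inclusion write $\bfG=\AAut(\cC,\cdot,n)$ and let $\tau_1=\exp(\delta_1)$, $\tau_3=\exp(\delta_3)$ be the commuting order $3$ automorphisms of items \textup{(1)} and \textup{(3)}, with $\delta_1=[L_{u_2},R_{v_3}]$ and $\delta_3=-\ad_{v_3}$, so that $\tau=\tau_1\tau_3$. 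The key device is to replace $\tau$ by the nilpotent derivation $\delta:=(\tau-\id)+(\tau-\id)^2$. Since $\charac\FF=3$ and $(\tau-\id)^3=0$, a direct check gives $\exp(\delta)=\tau$ and, conversely, $\tau-\id=\delta+2\delta^2$; hence $\delta$ and $\tau-\id$ are mutually polynomial, and for any $R\in\AlgF$ an element $\varphi\in\bfG(R)$ centralizes $\tau$ if and only if $\Ad_\varphi(\delta)=\delta$. Moreover one computes $\delta=\delta_1+\delta_3$, with $\delta_1^2=0$, $\delta_1\delta_3=\delta_3\delta_1=0$ as operators, and therefore $\delta^2=\delta_3^2=\ad_{v_3}^2$.

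The crux is to show that every such $\varphi$ fixes $v_3$ (not merely the line $\FF v_3$). Since $\varphi$ commutes with $\delta$ it commutes with $\delta^2=(\tau-\id)^2$, which sends $u_3\mapsto-v_3$ and kills the remaining canonical basis vectors; thus its image is the free rank-one summand $Rv_3$, and $\varphi(v_3)=\lambda v_3$ for some $\lambda\in R^\times$. Using $\delta^2=\ad_{v_3}^2$ and $\varphi\,\ad_x\,\varphi^{-1}=\ad_{\varphi(x)}$, the identity $\varphi\delta^2\varphi^{-1}=\delta^2$ becomes $\lambda^2\ad_{v_3}^2=\ad_{v_3}^2$, so $\lambda^2=1$. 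This alone only gives $\lambda=\pm1$ over a field and is weaker over a general base; the extra relation needed is that $\Ad_\varphi(\delta_3)=-\ad_{\varphi(v_3)}=\lambda\delta_3$, whence $\Ad_\varphi(\delta_1)=\delta-\lambda\delta_3=\delta_1+(1-\lambda)\delta_3$, so that $0=(\Ad_\varphi\delta_1)^2=(1-\lambda)^2\delta_3^2$ forces $(1-\lambda)^2=0$. Combining $\lambda^2=1$ with $(1-\lambda)^2=0$ yields $2(1-\lambda)=0$, and as $2$ is invertible we get $\lambda=1$. I expect this step—upgrading $\lambda^2=1$ to $\lambda=1$ functorially over an arbitrary $R$—to be the main obstacle, and it is precisely where the nilpotency $\delta_1^2=0$ of the long-root contribution is used.

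Once $\varphi(v_3)=v_3$, we obtain $\Ad_\varphi(\delta_3)=\delta_3$ and $\Ad_\varphi(\delta_1)=\delta_1$, so $\varphi$ centralizes both $\tau_3$ and $\tau_1$. By Theorem \ref{th:parabolics}, $\CCentr_\bfG(\tau_1)$ and $\CCentr_\bfG(\tau_3)$ are the derived subgroups $\bfH_1$ and $\bfH_3$ of $\bfP_{\{\veps_1\}}$ and $\bfP_{\{\veps_2-\veps_1\}}$, so $\CCentr_\bfG(\tau)\subseteq\bfH_1\cap\bfH_3$. It remains to identify this intersection. Since $\bfH_i$ lies in the corresponding parabolic and $\bfP_{\{\veps_1\}}\cap\bfP_{\{\veps_2-\veps_1\}}=\bfB$, we have $\bfH_1\cap\bfH_3\subseteq\bfB=\bfT\ltimes\bfU$. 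Each $\bfH_i$ contains $\bfU$, and $\bfH_i\cap\bfB=\bfT_i\ltimes\bfU$ with $\bfT_1=\ker(\veps_2-\veps_3)$ and $\bfT_3=\ker(\veps_3)$ (the maximal tori of $\bfH_1,\bfH_3$ identified in the proof of Theorem \ref{th:parabolics}); hence $\bfH_1\cap\bfH_3=(\bfT_1\cap\bfT_3)\ltimes\bfU$. A direct computation on the torus $\{\diag(\alpha_1,\alpha_2,\alpha_3):\alpha_1\alpha_2\alpha_3=1\}$ shows that $\ker(\veps_2-\veps_3)\cap\ker(\veps_3)$ (i.e.\ $\alpha_2=\alpha_3=1$) is trivial as a group scheme, so $\bfT_1\cap\bfT_3=1$ and $\bfH_1\cap\bfH_3=\bfU$. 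Together with the first inclusion, this gives $\CCentr_\bfG(\tau)=\bfU$.
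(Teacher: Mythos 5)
Your proof is correct, but it takes a genuinely different route from the paper's. The paper extracts from the centralizing condition only that $\varphi$ stabilizes the \emph{line} $\FF v_3$ (the square of the radical of $\Fix(\tau)$), which places $\varphi$ in the parabolic $\bfP_{\{\veps_2-\veps_1\}}$; it then factors $\CCentr_{\AAut(\cC,\cdot,n)}(\tau)=\bfU\bigl(\CCentr_{\AAut(\cC,\cdot,n)}(\tau)\cap\bfL_{\{\veps_2-\veps_1\}}\bigr)$ and finishes with a direct matrix computation on the Levi factor, showing that $\varphi\tau(u_3)=\tau\varphi(u_3)$ forces the Levi component into $\bfU_{\veps_2-\veps_1}$. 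You instead prove the sharper intermediate fact that $\varphi$ fixes $v_3$ itself, via the decomposition $\delta=N+N^2=\delta_1+\delta_3$ (with $N=\tau-\id$) and the operator identities $\delta_1^2=\delta_1\delta_3=\delta_3\delta_1=0$: combining $\lambda^2=1$ (from $\Ad_\varphi\delta^2=\delta^2$ and $\delta^2=\ad_{v_3}^2$) with $(1-\lambda)^2=0$ (from $(\Ad_\varphi\delta_1)^2=0$) gives $\lambda=1$ because $2\in\FF^\times$. This yields the clean structural statement $\CCentr_{\AAut(\cC,\cdot,n)}(\tau)=\CCentr_{\AAut(\cC,\cdot,n)}(\tau_1)\cap\CCentr_{\AAut(\cC,\cdot,n)}(\tau_3)$, after which Theorem \ref{th:parabolics} is used as a black box and the intersection of the two derived subgroups is computed inside $\bfB=\bfT\ltimes\bfU$ via $\ker(\veps_2-\veps_3)\cap\ker(\veps_3)=1$. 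Your route reuses earlier results and exposes why the centralizer of the product is exactly the intersection of the centralizers of the commuting factors, at the price of invoking the standard fact $\bfP_{\{\veps_1\}}\cap\bfP_{\{\veps_2-\veps_1\}}=\bfB$; the paper's route is more computational but self-contained at that step. The only details worth writing out are the asserted identities $\delta_1\delta_3=\delta_3\delta_1=0$ and $\im\delta^2=Rv_3$, which do hold: the image of $\delta_3$ is spanned by $v_3$, $e_1-e_2$, $u_1$, $u_2$, all killed by $\delta_1$, while the image of $\delta_1$ is spanned by $u_2$, $v_3$, both killed by $\delta_3$, and $\delta_3^2$ sends $u_3\mapsto -v_3$ and annihilates the remaining canonical basis vectors.
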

\begin{proof}
Any $\varphi$ in $\CCentr_{\AAut(\cC,\cdot,n)}(\tau)$ stabilizes the subalgebra  of elements fixed by $\tau$: $\Fix(\tau)=\espan{1,u_1,u_2,v_3}$, so it stabilizes too its nilpotent radical: $\rad\bigl(\Fix(\tau)\bigr)=\espan{u_1,u_2,v_3}$, and its square $\FF v_3$. But the stabilizer of $\FF v_3$ has been shown to be the parabolic subgroup $\bfP_{\{\veps_2-\veps_1\}}$ in the proof of Theorem \ref{th:parabolics}. On the other hand, with the notations of Section \ref{se:Zorn_autos}, both the torus $\bfT$ and the subgroups $\bfU_{\pm(\veps_2-\veps_1)}$ are contained in the subgroup $\SSL(\cU)$, and hence so is the Levi subgroup $\bfL_{\{\veps_2-\veps_1\}}$. Hence $\CCentr_{\AAut(\cC,\cdot,n)}(\tau)=\bfU\bigl(\CCentr_{\AAut(\cC,\cdot,n)}(\tau)\cap \bfL_{\{\veps_2-\veps_1\}}\bigr)$. For any algebra $R\in\AlgF$, any $R$-point $\varphi$ of $\bfL_{\{\veps_2-\veps_1\}}$ has a matrix in our basis $\{u_1,u_2,u_3\}$ of the form
\[
\begin{pmatrix} a&c&0\\ b&d&0\\ 0&0&r\end{pmatrix}
\]
with $(ad-bc)r=1$. Hence $\varphi(v_3)=\varphi(u_1\cdot u_2)=(au_1+b u_2)\cdot (cu_1+du_2)=(ad-bc)v_3$, so
\[
\begin{split}
\varphi\tau(u_3)&=\varphi\bigl(u_3+u_2+v_3-(e_1-e_2)\bigr)=ru_3+cu_1+du_2+(ad-bc)v_3+(e_1-e_2),\\
\tau\varphi(u_3)&=e\tau(u_3)=ru_3+ru_2+rv_3+r(e_1-e_2).
\end{split}
\]
Hence, if $\varphi$ centralizes $\tau$, we get $r=1$, $c=0$, and $d=1$, so that $\varphi$ is an element in $\bfU_{\veps_2-\veps_1}\subseteq \bfU$. Therefore, $\CCentr_{\AAut(\cC,\cdot,n)}(\tau)=\bfU$.
\end{proof}

\smallskip

Finally, we must deal with the order $3$ automorphisms in Theorem \ref{th:char3}.(2), which correspond to the automorphisms in Example \ref{ex:tauKa}.  We will begin with the case that appears explicitly in Theorem \ref{th:char3}. Thus, consider the order $3$ automorphism $\tau_{st}$ of the split Cayley algebra $(\cC,\cdot,n)$ given in \eqref{eq:tau_st} that permutes cyclically the elements of our basis of the Peirce component $\cU$. That is,
\[
\tau(e_j)=e_j,\quad \tau(u_i)=u_{i+1},\quad \tau(v_i)=v_{i+1},
\]
for $j=1,2$, and for $i=1,2,3$ (indices taken modulo $3$).

In this case, the subalgebra of elements fixed by $\tau$ is $\Fix(\tau)=\espan{e_1,e_2,u,v}$, with $u=u_1+u_2+u_3$ and $v=v_1+v_2+v_3$. This is an associative algebra with nilpotent radical $\rad\bigl(\Fix(\tau)\bigr)=\FF u+\FF v$. Actually $(\FF u+\FF v)^{\cdot 2}=0$. Any element in the centralizer $\bfH=\CCentr_{\AAut(\cC,\cdot,n)}(\tau)$ stabilizes $\Fix(\tau)$ and its radical, and hence there is a natural homomorphism:
\begin{equation}\label{eq:Phi_tau_st}
\Phi:\bfH\longrightarrow \AAut\Bigl(\Fix(\tau)/\rad\bigl(\Fix(\tau)\bigr)\Bigr)\simeq\AAut(\FF\times\FF)\simeq \sfC_2,
\end{equation}
where $\sfC_2$ denotes the constant group scheme corresponding to the cyclic group of order $2$. The homomorphism $\Phi$ is surjective with a section given by the embedding of $\sfC_2$ into $\bfH$ corresponding to the order $2$ automorphism $\sigma\in\Aut(\cC,\cdot,n)$ given by $\sigma(e_1)=e_2$, $\sigma(u_i)=v_i$, $i=1,2,3$. Thus, $\bfH$ is the semidirect product 
\begin{equation}\label{eq:HkerPhi_C2}
\bfH=\ker\Phi\rtimes \sfC_2.
\end{equation}

As $u\cdot v=3e_1=0=v\cdot u$, the derivations $\ad_u$ and $\ad_v$ commute. Moreover, we have $\ad_u^3=\ad_v^3=0$. Their matrices in the canonical basis have integral entries, and if $T$ and $S$ are indeterminates,
\[
\exp(T\ad_{u})\exp(S\ad_v)=\exp(\ad_{Tu+Sv})\in\Aut(\cC\otimes_\FF\FF[T,S]),
\]
and hence, over any  algebra $R\in\AlgF$, it makes sense to consider the group $\bfK(R)=\{\exp(\ad_{tu+sv}):t,s\in R\}$, which is a subgroup of $\Aut(\cC\otimes_\FF R,\cdot,n)$. In this way we obtain a subgroup $\bfK$ of $\AAut(\cC,\cdot,n)$, which is contained in $\bfH$, and which is isomorphic to $\bfG_a^2$. Actually, $\bfK$ is contained in $\ker\Phi$ and its intersection with the subgroup $\SSL(\cU)$ is trivial (recall that $\SSL(\cU)$ is identified with the subgroup of $\AAut(\cC,\cdot,n)$ that fixes $e_1$ and $e_2$). Moreover, for any $R\in\AlgF$, any $R$-point $\varphi\in\ker(\Phi)$ satisfies $\varphi(e_1)=e_1+tu+sv$ for some $t,s\in R$, so $\varphi(e_1)=\exp(\ad_{-tu+sv})(e_1)$. Thus $\varphi\circ\exp(\ad_{tu-sv})$ fixes $e_1$ and $e_2=1-e_1$, and hence it belongs to $\bfH\cap\SSL(\cU)$. Therefore, we have
\begin{equation}\label{eq:kerPhi_KHSSL}
\ker\Phi=\bfK\rtimes \bigl(\bfH\cap\SSL(\cU)\bigr).
\end{equation}

There are two natural subgroups contained in $\bfH\cap\SSL(\cU)$. One of them is isomorphic to $\boldmu_3$, as $\boldmu_3$ embeds in $\bfH\cap\SSL(\cU)$ as follows:
\[
\begin{split}
\boldmu_3\ &\hookrightarrow\  \bfH,\\
 r&\mapsto \begin{cases} \id&\text{on $\FF e_1+\FF e_2$,}\\
                        r\id&\text{on $\cU$,}\\
                        r^2\id&\text{on $\cV$.}
                        \end{cases}
\end{split}
\]
The other natural subgroup of $\bfH\cap\SSL(\cU)$ is $\bfN\bydef \ker\Psi$, where $\Psi$ is the restriction homomorphism
\begin{equation}\label{eq:Psi}
\Psi:\bfH\longrightarrow \AAut\bigl(\Fix(\tau)\bigr).
\end{equation}
It is clear that the subgroups $\bfK$ and $\bfN$ commute. Given any $R\in\AlgF$ and any $\varphi\in \bfN(R)$, $\varphi(u_1)=au_1+bu_2+cu_3$ for $a,b,c\in R$. As $\varphi$ commutes with $\tau$, the matrix, relative to the basis $\{u_1,u_2,u_3\}$ of the restriction of $\varphi$ to $\cU$ is 
\begin{equation}\label{eq:matrix_abc}
\begin{pmatrix} a&c&b\\ b&a&c\\ c&b&a\end{pmatrix}
\end{equation}
with $a+b+c=1$ because $\varphi(u)=u$. Hence, in the basis $\{u_1,u_1-u_2,u_1+u_2+u_3\}$, the matrix is
\begin{equation}\label{eq:matrix_bc}
\begin{pmatrix} 1&0&0\\ c-b&1&0\\ c&c-b&1\end{pmatrix}
\end{equation}
and this shows that $\bfN$ is a two-dimensional unipotent group. Actually, the assignment
\[
(r,s)\mapsto\begin{pmatrix} 1&0&0\\ r&1&0\\ \frac{1}{2}(r+r^2)+s&\quad r\quad&\quad 1\quad\end{pmatrix}
\]
shows that $\bfN$ is isomorphic to $\bfG_a^2$.

\begin{proposition}\label{pr:centralizer_tau_st}
Let $(\cC,\cdot,n)$ be the split Cayley algebra over a field $\FF$ of characteristic $3$. Then the centralizer of the order $3$ automorphism $\tau_{st}$ in \eqref{eq:tau_st} is, with the notations above,
\[
\bfH=\Bigl(\bigl(\bfK\rtimes\boldmu_3\bigr)\times\bfN\Bigr)\rtimes \sfC_2.
\]
\end{proposition}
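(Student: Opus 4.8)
The plan is to combine the two decompositions already in hand, namely $\bfH=\ker\Phi\rtimes\sfC_2$ from \eqref{eq:HkerPhi_C2} and $\ker\Phi=\bfK\rtimes\bigl(\bfH\cap\SSL(\cU)\bigr)$ from \eqref{eq:kerPhi_KHSSL}, with a precise description of the remaining factor $\bfH\cap\SSL(\cU)$. Everything will be carried out functorially, over an arbitrary $R\in\AlgF$, and concluded by Yoneda, since the group schemes involved need not be smooth. Granting the identification $\bfH\cap\SSL(\cU)=\boldmu_3\times\bfN$ (with $\boldmu_3$ the scalar subgroup displayed before \eqref{eq:Psi} and $\bfN=\ker\Psi$), the assembly into $\bigl((\bfK\rtimes\boldmu_3)\times\bfN\bigr)\rtimes\sfC_2$ is then just a matter of checking the commutation relations among the three factors $\bfK$, $\boldmu_3$, $\bfN$.

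First I would pin down $\bfH\cap\SSL(\cU)$. Since every element of $\SSL(\cU)$ fixes $e_1$ and $e_2$, such an element lies in $\bfH$ exactly when its restriction to $\cU$ commutes with $\tau_{st}\vert_\cU$, i.e.\ with the cyclic permutation $u_1\mapsto u_2\mapsto u_3\mapsto u_1$; over any $R$ this forces the matrix to be a circulant $aI+bP+cP^2$, as in \eqref{eq:matrix_abc}, where $P$ is the matrix of $\tau_{st}\vert_\cU$. The determinant is the only remaining constraint. Writing $\epsilon=P-I$, one has in characteristic $3$ that $\epsilon^3=(P-I)^3=P^3-I=0$ while $\epsilon^2=P^2+P+I\neq 0$, so $\epsilon$ is a regular nilpotent and $aI+bP+cP^2=(a+b+c)I+(b+2c)\epsilon+c\epsilon^2$ is conjugate to an upper triangular matrix with constant diagonal $a+b+c$; hence $\det=(a+b+c)^3$. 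Thus the $R$-points of $\bfH\cap\SSL(\cU)$ are exactly the circulants with $(a+b+c)^3=1$, that is, with scalar part $\lambda:=a+b+c\in\boldmu_3(R)$.

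Next I would factor this group. The scalar circulants $\lambda I$ with $\lambda^3=1$ are exactly the central subgroup $\boldmu_3$ (acting as $\lambda$ on $\cU$ and $\lambda^{-1}$ on $\cV$), while $\bfN$ has already been identified with the circulants of scalar part $1$. Because $\lambda$ is a unit whenever $\lambda^3=1$, every determinant-one circulant $C$ factors uniquely as $C=(\lambda I)\cdot(\lambda^{-1}C)$ with $\lambda I\in\boldmu_3$ and $\lambda^{-1}C\in\bfN$; as $\lambda I$ is central, this exhibits a morphism of group schemes $\boldmu_3\times\bfN\to\bfH\cap\SSL(\cU)$ that is bijective on $R$-points for every $R$, hence an isomorphism by Yoneda. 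Substituting into \eqref{eq:kerPhi_KHSSL} gives $\ker\Phi=\bfK\rtimes(\boldmu_3\times\bfN)$.

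Finally I would regroup. Conjugating a generator of $\bfK$ by the scalar automorphism $\lambda I$ sends $\exp(\ad_{tu+sv})$ to $\exp(\ad_{\lambda t\, u+\lambda^{-1}s\, v})$, again in $\bfK$, so $\boldmu_3$ normalizes $\bfK$ and $\bfK\rtimes\boldmu_3$ is a subgroup. Since $\bfN$ commutes with $\bfK$ (already noted) and with the central $\boldmu_3$, and meets $\bfK\rtimes\boldmu_3$ trivially (a nontrivial element of $\bfK$ moves $e_1$, and $\boldmu_3\cap\bfN$ is trivial), the subgroup they generate is the direct product $(\bfK\rtimes\boldmu_3)\times\bfN$, which therefore equals $\ker\Phi$. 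Combining with \eqref{eq:HkerPhi_C2} yields $\bfH=\bigl((\bfK\rtimes\boldmu_3)\times\bfN\bigr)\rtimes\sfC_2$. I expect the main obstacle to be the determinant computation together with the insistence on working functorially: the condition $\det=1$ must be read as $(a+b+c)^3=1$, which cuts out the non-reduced scheme $\boldmu_3$ rather than a finite constant group, and it is precisely this infinitesimal part that accounts for the non-smoothness of $\AAut(\cC,*,n)$.
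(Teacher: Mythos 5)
Your proposal is correct and follows essentially the same route as the paper: it reduces to identifying $\bfH\cap\SSL(\cU)$ with the circulant matrices of determinant $(a+b+c)^3=1$ and then invokes \eqref{eq:HkerPhi_C2} and \eqref{eq:kerPhi_KHSSL}. You merely make explicit a few steps the paper compresses into ``the result follows'' --- the factorization $C=(\lambda I)(\lambda^{-1}C)$ realizing $\boldmu_3\times\bfN$, the determinant computation via the regular nilpotent $P-I$, and the commutation checks for the final regrouping --- all of which are sound.
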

\begin{proof}
Because of \eqref{eq:HkerPhi_C2} and \eqref{eq:kerPhi_KHSSL}, it is enough to show that $\bfH\cap\SSL(\cU)=\boldmu_3\times \bfN$. But for any $R\in\AlgF$ and any $R$-point $\varphi\in\bfH\cap\SSL(\cU)$, if $\varphi(u_1)=au_1+bu_2+cu_3$ ($a,b,c\in R$), then the matrix of the restriction of $\varphi$ to $\cU$ is
the matrix in \eqref{eq:matrix_abc}, with determinant $1$, that is, with $(a+b+c)^3=1$, so $a+b+c\in\boldmu_3$. The result follows.
\end{proof}

Note that the centralizer in Proposition \ref{pr:centralizer_tau_st} is four-dimensional and \emph{not smooth}, because of the presence of the subgroup $\boldmu_3$.
The same situation appears in our next result, which deals with the general case in Theorem \ref{th:char3}.(2). We must consider the order $3$ automorphisms $\tau_{\cK,a}$ in Example \ref{ex:tauKa}:

\begin{theorem}\label{th:tauKa}
Let $(\cC,\cdot,n)$ be the split Cayley algebra over a field $\FF$ of characteristic $3$. Let $\tau$ be the order $3$ automorphism in Example \ref{ex:tauKa} and let $\bfH=\CCentr_{\AAut(\cC,\cdot,n)}(\tau)$ be its centralizer. Then, with the notations in this Example, there is a short exact sequence
\[
1\rightarrow \bigl(\bfK\rtimes\boldmu_{3[\cK]}\bigr)\times\bfN\rightarrow \bfH\rightarrow \sfC_2\rightarrow 1,
\]
where 
\begin{itemize}
\item 
$\boldmu_{3[\cK]}$ is the twisted form of $\boldmu_3$ such that for any $R\in\AlgF$, $\boldmu_{3[\cK]}(R)=\{ r\in\cK\otimes_\FF R: r^3=1,\ n(r)=1\}$ (see, for instance, \cite[p.~418]{KMRT}). Here $n$ denotes the $R$-extension of the generic norm of $\cK$. 

$\boldmu_{3[\cK]}$ embeds in $\bfH$ by sending any $r\in\boldmu_{3[\cK]}(R)$ to the automorphism of $\cC\otimes_\FF R$ that is the identity on $\cK\otimes_\FF R$ and $r$ times the identity on $\cW\otimes_\FF R$.
\item 
$\bfK$ is the abelian subgroup such that $\bfK(R)=\{\exp(\ad_x) : x\in \rad\bigl(\Fix(\tau)\bigr)\otimes_\FF R\}$ for any $R\in\AlgF$, so that $\bfK$ is isomorphic to $\bfG_a^2$.
\item $\bfN$ is the two-dimensional unipotent subgroup obtained as the kernel of the restriction map $\bfH\rightarrow \AAut\bigl(\Fix(\tau)\bigr)$. It is isomorphic too to $\bfG_a^2$.
\end{itemize}
\end{theorem}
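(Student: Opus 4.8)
The plan is to follow the proof of Proposition~\ref{pr:centralizer_tau_st}, replacing the split data used there by the hermitian data $(\cK,\cW,\sigma,\Phi)$ of Example~\ref{ex:tauKa}. First I would compute the fixed algebra. As $\tau$ is $\cK$-linear on $\cW$ and permutes the basis cyclically, in characteristic $3$ its space of invariants in $\cW$ is the $\cK$-line spanned by $w\bydef w_1+w_2+w_3$, so $\Fix(\tau)=\cK\oplus\cK\cdot w$. Since $\sigma(w,w)=\sum_{i,j}\sigma(w_i,w_j)=0$ and $w\times w=0$, and similarly for the remaining products, one gets $(\cK\cdot w)^{\cdot 2}=0$; hence $\cK\cdot w=\rad\bigl(\Fix(\tau)\bigr)$ and $\Fix(\tau)/\rad\bigl(\Fix(\tau)\bigr)\simeq\cK$. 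Every $\varphi\in\bfH$ preserves $\Fix(\tau)$ and its radical, which produces the homomorphism $\theta\colon\bfH\to\AAut(\cK)\simeq\sfC_2$ whose kernel is to be identified with $(\bfK\rtimes\boldmu_{3[\cK]})\times\bfN$.

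I would first split off $\bfK$. For $x\in\cK\cdot w$ one has $x^{\cdot 2}=0$, $\ad_x^3=0$, the $\ad_x$ commute because $(\cK\cdot w)^{\cdot 2}=0$, and $\tau(x)=x$, so $\bfK(R)=\{\exp(\ad_x):x\in(\cK\cdot w)\otimes_\FF R\}$ is a subgroup of $\bfH$ isomorphic to $\bfG_a^2$. Writing $x=a\cdot w$ gives $\exp(\ad_{a\cdot w})(k)=k+\bigl((\bar k-k)\cdot a\bigr)\cdot w$ for $k\in\cK$, so $\bfK\subseteq\ker\theta$, and since $\bar k_0-k_0$ is invertible in $\cK=\FF1\oplus\FF k_0$, the element $a\cdot w$ can be chosen to realize any prescribed translation of $\cK$ into its radical. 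Thus every point of $\ker\theta$ becomes, after multiplication by a unique point of $\bfK$, an automorphism fixing $\cK$ elementwise; as the pointwise stabilizer of $\cK$ is $\SSU(\cW,\sigma)$ (the description recalled just before Lemma~\ref{le:Stab_w}, valid in characteristic $3$), this yields $\ker\theta=\bfK\rtimes\bigl(\bfH\cap\SSU(\cW,\sigma)\bigr)$, the second factor normalizing $\bfK$ because it preserves $\cK\cdot w$.

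The core computation is that of $\bfH\cap\SSU(\cW,\sigma)$. Such a $\varphi$ is $\cK$-linear and commutes with the cyclic shift $P\colon w_i\mapsto w_{i+1}$, hence is a circulant $M=a\,\id+b\,P+c\,P^{2}$; the Gram matrix of $\sigma$ is $G=-(P+P^{2})$, which is invertible, and because all circulants in $P$ commute the unitarity condition $M^{*}GM=G$ reduces to $M^{*}M=\id$, while the special condition reads $\det M=(a+b+c)^{\cdot 3}=1$. Setting $s=a+b+c$ (the eigenvalue of $M$ on $w$, where $Pw=w$), the determinant gives $s^{\cdot 3}=1$, and evaluating $M^{*}M=\id$ on $w$ gives $\bar s\,s=n(s)=1$, so $s\in\boldmu_{3[\cK]}$. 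The central scalars $s\cdot\id_\cW$ with $s\in\boldmu_{3[\cK]}$ are automorphisms fixing $\cK$ and commuting with $\tau$, and they form a section of $\varphi\mapsto s$ whose kernel $\{s=1\}$ is exactly the pointwise stabilizer $\bfN$ of $\Fix(\tau)$, a two-dimensional unipotent group $\simeq\bfG_a^2$; since the scalars are central in $\SSU(\cW,\sigma)$ this gives $\bfH\cap\SSU(\cW,\sigma)=\boldmu_{3[\cK]}\times\bfN$. Reassembling, $\bfN$ commutes with $\bfK$ and with the scalars (it fixes $\cK\cdot w$ pointwise) and $\boldmu_{3[\cK]}$ normalizes $\bfK$ (it scales $\cK\cdot w$), whence $\ker\theta=(\bfK\rtimes\boldmu_{3[\cK]})\times\bfN$.

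It remains to see that $\theta$ is surjective, so that the displayed sequence is exact. Over $\bar\FF$ the algebra $\cK$ splits and $\tau$ becomes conjugate to $\tau_{st}$ (Corollary~\ref{co:Fperfect_tau_Ka}), so Proposition~\ref{pr:centralizer_tau_st} shows $\theta_{\bar\FF}$ is surjective; as $\sfC_2$ is \'etale, $\theta$ is faithfully flat and the short exact sequence follows. I expect the main obstacle to be performing the analysis of $\bfH\cap\SSU(\cW,\sigma)$ \emph{functorially} over an arbitrary $R\in\AlgF$: because $\boldmu_{3[\cK]}$ is not smooth, checking the relations $s^{\cdot 3}=1$, $n(s)=1$ and the central splitting on $\bar\FF$-points is insufficient, and they must be derived scheme-theoretically from $M^{*}M=\id$ and $\det M=1$ over every $R$ --- which is precisely where the non-smoothness of $\bfH$ resides.
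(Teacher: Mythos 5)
Your proposal is correct and follows essentially the same route as the paper: the same exact sequence built from $\Fix(\tau)$ and its radical, the same decomposition $\ker\theta=\bfK\rtimes\bigl(\bfH\cap\SSU(\cW,\sigma)\bigr)$ with the scalar copy of $\boldmu_{3[\cK]}$ splitting off centrally from $\bfN$, and the same base-change-to-$\overline{\FF}$ argument for surjectivity; your only local deviation is the circulant-algebra reduction of $M^{*}GM=G$ to $M^{*}M=\id$ evaluated on the eigenvector $w$, which is a slightly slicker way to reach $n(a+b+c)=1$ than the paper's direct computation of $\sigma\bigl(\varphi(w_1),\varphi(w_j)\bigr)$ and its ``$z+\bar z=0$ with $z\in R$ forces $z=0$'' step. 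The one item you assert but do not verify is the isomorphism $\bfN\simeq\bfG_a^2$ (which is part of the statement): the paper extracts it from the equations $a+b+c=1$ and $a\bar b+b\bar c+c\bar a=0$ via an explicit two-parameter normal form, a routine but non-vacuous computation your framework already supplies the equations for.
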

\begin{proof}
Here $\Fix(\tau)=\cK\oplus\cK\cdot(w_1+w_2+w_3)$ and its radical is $\cK\cdot (w_1+w_2+w_3)$. As for Proposition \ref{pr:centralizer_tau_st}, we have a homomorphism as in \eqref{eq:Phi_tau_st} 
\[
\Phi:\bfH\longrightarrow \AAut\Bigl(\Fix(\tau)/\rad\bigl(\Fix(\tau)\bigr)\Bigr)\simeq\AAut(\cK)\simeq \sfC_2,
\]
and  $\ker\Phi=\bfK\rtimes \bigl(\bfH\cap\SSU(\cW,\sigma)\bigr)$, with $\bfK$ as above and where $\SSU(\cW,\sigma)$ is identified naturally with the subgroup of automorphisms of $(\cC,\cdot,n)$ which restrict to the identity on $\cK$. Moreover, $\Phi$ is surjective, as so it is over the algebraic closure $\overline{\FF}$, where we are in the situation of \eqref{eq:Phi_tau_st}. (Note that $\sfC_2$ is smooth.)

For $R\in\AlgF$, if $\varphi$ is an $R$-point of $\bfH\cap\SSU(\cW,\sigma)$, then $\varphi(w_1)=aw_1+bw_2+cw_3$ for $a,b,c\in\cK\otimes_\FF R$. As $\varphi$ commutes with $\tau$, the matrix of $\varphi$ in the basis $\{w_1,w_2,w_3\}$ is the matrix in \eqref{eq:matrix_abc}, whose determinant is $(a+b+c)^3$, which must be $1$. Write $r=a+b+c$. 

As $\varphi\in\SSU(\cW,\sigma)(R)$, $\sigma\bigl(\varphi(w_1),\varphi(w_1)\bigr)=\sigma(w_1,w_1)=0$, and this happens if and only if $-(a\bar b+a\bar c+b\bar a+b\bar c+c\bar a+a\bar b)=0$, if and only if $n(a,b)+n(a,c)+n(b,c)=0$, where $x\mapsto \bar x$ denotes the involution on $\cK\otimes_\FF R$ obtained by the extension of the nontrivial involution on $\cK$. Also $\sigma\bigl(\varphi(w_1),\varphi(w_2)\bigr)=\sigma(w_1,w_2)=-1$, which is equivalent to $-\bigl(n(a)+n(b)+n(c)+a\bar b+b\bar c+c\bar a\bigr)=-1$. This implies $a\bar b+b\bar c+c\bar a\in R$, so $a\bar b+b\bar c+c\bar a=\frac{1}{2}\bigl(n(a,b)+n(a,c)+n(b,c)\bigr)=0$. It follows then that $n(r)=n(a+b+c)=n(a)+n(b)+n(c)=1$. Therefore, $r\in\boldmu_{3[\cK]}$.

Now, with $\bfN$ being defined as the kernel of the restriction homomorphism $\Psi:\bfH\rightarrow \AAut\bigl(\Fix(\tau)\bigr)$, as in \eqref{eq:Psi}, the same arguments as for the proof of Proposition \ref{pr:centralizer_tau_st} give that $\bfN$ is a two-dimensional unipotent subgroup of $\bfH\cap\SSU(\cW,\sigma)$ and that $\ker\Phi=\bigl(\bfK\rtimes\boldmu_{3[\cK]}\bigr)\times\bfN$. More precisely, any $R$-point of $\bfN$ $\bigl(\subseteq \SSU(\cW,\sigma)\bigr)$ has a coordinate matrix as in \eqref{eq:matrix_abc}, relative to the basis $\{w_1,w_2,w_3\}$, with $a,b,c\in\cK\otimes_\FF R$ satisfying $a+b+c=1$, or as in \eqref{eq:matrix_bc}, relative to the basis $\{w_1,w_1-w_2,w_1+w_2+w_3\}$. But, as above, $a\bar b+b\bar c+c\bar a=0$, so $(1-b-c)\bar b+b\bar c+c(1-\bar b-\bar c)=0$. This gives $c+\bar b=(b-c)(\bar b-\bar c)=n(b-c)\in R$ and, as $b+\bar b\in R$, we get $r\bydef c-b\in R$. Hence $r^2=n(b-c)=c+\bar b$, so $c=r^2-\bar b=r+b$ and thus $c=-(r+r^2)-(b-\bar b)$. Let $\cK=\FF 1+\FF k$, with $n(1,k)=0$, then $\bar b-b=s k$ for some $s\in R$, and the assignment 
\[
(r,s)\mapsto \begin{pmatrix} 1&0&0\\ r&1&0\\ -(r+r^2)+sk&\quad r\quad&\quad 1\quad\end{pmatrix}
\]
gives an isomorphism $\bfG_a^2\simeq\bfN$.
\end{proof}

We will see in Remark \ref{re:dim_gO3} that the short exact sequence in Theorem \ref{th:tauKa} does not necessarily split, and even that the projection of $\bfH$ onto $\sfC_2$ may fail to be surjective for rational points.

\bigskip

%---------------------------------------
% SECTION:  Idempotents in characteristic 3
%----------------------------------------

\section{Idempotents in Okubo algebras, $\charac\FF =3$}\label{se:char3_idempotents}

The next definition is based on Theorem \ref{th:quadratic_auto}.

\begin{df}\label{df:cl}
Let $\FF$ be a field of characteristic $3$.
\begin{itemize}
\item
Let $\cK$ (respectively, $\cK'$) be a quadratic \'etale algebra over $\FF$, and let $a\in\cK$ (resp. $a'\in\cK'$) be a norm $1$ element. Then the pair $(\cK,a)$ is said to be \emph{equivalent} to $(\cK',a')$ if and only if there is an isomorphism $\varphi:\cK\rightarrow \cK'$ such that $\varphi(a)\in(\cK')^{\cdot 3}\cdot a'$. 

The equivalence class of the pair $(\cK,a)$ will be denoted by $[\cK,a]$.

\item Let $e$ be a quadratic idempotent of and Okubo algebra $(\cO,*,n)$ and let, as in \eqref{eq:e_tau}, $\tau_e:x\mapsto e*(e*x)$ be the associated order $3$ automorphism of $(\cO,*,n)$ and of the split Cayley algebra $(\cO,\cdot,n)$, where $x\cdot y=(e*x)*(y*e)$. Then $\tau_e$ is conjugate in $\Aut(\cO,\cdot,n)$ to an automorphism $\tau_{\cK,a}$ as in Example \ref{ex:tauKa}. Define the \emph{class of $e$} as $\cl(e)\bydef [\cK,a]$.
\end{itemize}
\end{df}

\begin{proposition}\label{pr:conjugation_class}
Let $(\cO,*,n)$ be an Okubo algebra over a field $\FF$ of characteristic $3$, and let $e$, $f$ be two quadratic idempotents. Then $e$ and $f$ are conjugate (by an element of $\Aut(\cO,*,n)$) if and only if $\cl(e)=\cl(f)$.
\end{proposition}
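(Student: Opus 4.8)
The plan is to translate the statement about $*$-conjugacy of idempotents into a statement about $\cdot$-conjugacy of the associated order $3$ automorphisms, and then to invoke the conjugacy criterion of Theorem \ref{th:quadratic_auto}. Write $x\cdot_e y=(e*x)*(y*e)$ and $x\cdot_f y=(f*x)*(y*f)$ for the two Hurwitz multiplications attached to $e$ and $f$, with units $e$ and $f$ respectively. The bridge (in the spirit of Proposition \ref{pr:fix_cent} and the recovery formula \eqref{eq:Petersson}) is that $e$ and $f$ are conjugate in $\Aut(\cO,*,n)$ if and only if there is an isomorphism of Hurwitz algebras $\psi\colon(\cO,\cdot_e,n)\to(\cO,\cdot_f,n)$ with $\psi\circ\tau_e=\tau_f\circ\psi$. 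First I would establish this equivalence: if $\psi\in\Aut(\cO,*,n)$ satisfies $\psi(e)=f$, then a one-line computation gives $\psi(x\cdot_e y)=\psi(x)\cdot_f\psi(y)$ and $\psi\tau_e=\tau_f\psi$; conversely, if $\psi$ is an intertwining Hurwitz isomorphism, then $\psi(e)=f$ since units go to units, and using $x*y=\tau_e(\bar x)\cdot_e\tau_e^2(\bar y)$ together with the facts that $\psi$ preserves $n$ and therefore carries the $e$-conjugation to the $f$-conjugation, a direct computation shows $\psi(x*y)=\psi(x)*\psi(y)$, so $\psi\in\Aut(\cO,*,n)$.

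For the forward implication, suppose $\psi(e)=f$ with $\psi\in\Aut(\cO,*,n)$. By the previous step $\psi$ is a Hurwitz isomorphism with $\psi\tau_e\psi^{-1}=\tau_f$. The data $(\cK,a)$ attached to a type-(2) automorphism is built intrinsically from the Hurwitz structure and the fixed quadratic \'etale subalgebra, via the forms $\sigma$ and $\Phi$ of Example \ref{ex:tauKa}, so any Hurwitz isomorphism transports $\tau_{\cK,a}$ to $\tau_{\psi(\cK),\psi(a)}$; since $\psi\vert_\cK$ is an isomorphism $\cK\to\psi(\cK)$ carrying $a$ to $\psi(a)$, the equivalence class is unchanged, whence $\cl(f)=\cl(e)$. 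The point to verify carefully here is that $[\cK,a]$ is a genuine invariant of the pair $\bigl((\cO,\cdot,n),\tau\bigr)$ up to Hurwitz isomorphism; this follows by combining the ``moreover'' part of Theorem \ref{th:quadratic_auto} (conjugacy on a \emph{fixed} Cayley algebra) with the transport of structure just described.

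For the converse, assume $\cl(e)=\cl(f)$. Because $\tau_e$ and $\tau_f$ are order $3$ automorphisms of Cayley algebras in characteristic $3$, both $(\cO,\cdot_e,n)$ and $(\cO,\cdot_f,n)$ are split (Theorem \ref{th:char3}), hence isomorphic; fix any Hurwitz isomorphism $\eta\colon(\cO,\cdot_e,n)\to(\cO,\cdot_f,n)$. Then $\eta\tau_e\eta^{-1}$ and $\tau_f$ are two type-(2) automorphisms of the \emph{same} split Cayley algebra $(\cO,\cdot_f,n)$, with classes $\cl(e)$ and $\cl(f)$ respectively (by the transport-of-structure remark), and these are equal. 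The conjugacy criterion of Theorem \ref{th:quadratic_auto} then yields $\chi\in\Aut(\cO,\cdot_f,n)$ with $\chi(\eta\tau_e\eta^{-1})\chi^{-1}=\tau_f$. Setting $\psi=\chi\circ\eta$ produces a Hurwitz isomorphism $(\cO,\cdot_e,n)\to(\cO,\cdot_f,n)$ with $\psi\tau_e\psi^{-1}=\tau_f$, and by the first step $\psi\in\Aut(\cO,*,n)$ with $\psi(e)=f$. The main obstacle is bookkeeping rather than a new idea: one must keep track of the two distinct Hurwitz multiplications $\cdot_e$ and $\cdot_f$ sharing the same underlying space $\cO$, reduce the comparison to a single Cayley algebra via $\eta$ so that Theorem \ref{th:quadratic_auto} applies verbatim, and confirm that intertwining the $\tau$'s is exactly the condition that upgrades a Hurwitz isomorphism to a $*$-automorphism.
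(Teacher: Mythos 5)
Your proposal is correct and follows essentially the same route as the paper's proof: both reduce $*$-conjugacy of $e$ and $f$ to the existence of a Hurwitz isomorphism $(\cO,\cdot_e,n)\to(\cO,\cdot_f,n)$ intertwining $\tau_e$ and $\tau_f$, transport the class $[\cK,a]$ along a fixed Hurwitz isomorphism to compare both automorphisms on one split Cayley algebra, and then invoke Theorem~\ref{th:quadratic_auto}. The key verification that an intertwining Hurwitz isomorphism is automatically a $*$-automorphism sending $e$ to $f$ is the same computation the paper carries out.
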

\begin{proof} 
Consider the order $3$ automorphisms $\tau_e$ and $\tau_f$ and the Cayley algebras $(\cO,\cdot,n)$ and $(\cO,\diamond,n)$ with multiplications given by $x\cdot y=(e*x)*(y*e)$ and $x\diamond y=(f*x)*(y*f)$ respectively. Since both $(\cO,\cdot,n)$ and $(\cO,\diamond,n)$ are the split Cayley algebra up to isomorphism, there is an isomorphism $\varphi:(\cO,\cdot,n)\rightarrow (\cO,\diamond,n)$. Note that $\varphi(e)=f$, because $e$ is the unity of $(\cO,\cdot,n)$ and $f$ the one in $(\cO,\diamond,n)$. Now $\tau_e$ is conjugate to some $\tau_{\cK,a}$ in $\Aut(\cO,\cdot,n)$ and $\tau_f$ to $\tau_{\cK',a'}$ in $\Aut(\cO,\diamond,n)$. Then $\varphi\circ\tau_e\circ\varphi^{-1}$ is conjugate to $\tau_{\varphi(\cK),\varphi(a)}$ and trivially $[\varphi(\cK),\varphi(a)]=[\cK,a]$.

Then $[\cK,a]=[\cK',a']$ if and only if $\varphi\circ\tau_e\circ\varphi^{-1}$ is conjugate to $\tau_f$ in $\Aut(\cO,\diamond,n)$, if and only if there is an automorphism $\psi\in\Aut(\cO,\diamond,n)$ such that $\psi\circ\varphi\circ\tau_e\circ(\psi\circ\varphi)^{-1}=\tau_f$, if and only if there is an isomorphism $\phi:(\cO,\cdot,n)\rightarrow (\cO,\diamond,n)$ such that $\phi\circ\tau_e=\tau_f\circ\phi$. In this case, for any $x,y\in\cO$,
\[
\begin{split}
\phi(x*y)&=\phi\Bigl(\tau_e\bigl(n(e,x)e-x\bigr)\cdot\tau_e^2\bigl(n(e,y)e-y\bigr)\Bigr)\\
 &=\tau_f\Bigl(n(f,\phi(x))f-\phi(x)\Bigr)\diamond\tau_f^2\Bigl(n(f,\phi(y))-\phi(y)\Bigr)\\
 &=\phi(x)*\phi(y),
\end{split}
\]
so $\phi\in\Aut(\cO,*,n)$, and $\phi(e)=f$. 

Conversely, if $\phi\in\Aut(\cO,*,n)$ and $\phi(e)=f$, then $\phi\circ\tau_e=\tau_f\circ\phi$, and $\phi$ is also an isomorphism $(\cO,\cdot,n)\rightarrow (\cO,\diamond,n)$.
\end{proof}

By \cite{Eld97}, if an Okubo algebra $(\cO,*,n)$ over a field $\FF$ of characteristic $3$ contains an idempotent, then either it is the split one (and this happens if and only if $g(\cO)=\FF^3$, where $g:\cO\rightarrow \FF$ is the semilinear map given by $g(x)=n(x,x*x)$ as in \eqref{eq:g}), or $g(\cO)$ is a cubic (purely inseparable) field extension of $\FF^3$, i.e., there is $\alpha\in\FF\setminus\FF^3$ such that $g(\cO)=\FF^3(\alpha)$. In the latter case, two such algebras are isomorphic if and only if $g(\cO_1)=g(\cO_2)$.

\begin{proposition}\label{pr:g_tau_Ka}
Let $(\cC,\cdot,n)$ be the split Cayley algebra over a field $\FF$ of characteristic $3$, let $\cK$ be a quadratic \'etale subalgebra, and $a\in\cK$ an element of norm $1$ as in Example \ref{ex:tauKa}. Consider the Petersson algebra $\cC_{\tau_{\cK,a}}$. Then 
\[
g\bigl(\cC_{\tau_{\cK,a}}\bigr)=\FF^3(a+\bar a)
\]
(the subfield of $\FF$ generated by $\FF^3$ and $a+\bar a$).
\end{proposition}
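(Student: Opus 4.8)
The plan is to exploit that the map $g$ of \eqref{eq:g} (that is, $g(x)=n(x,x*x)$ for $*=*_{\tau_{\cK,a}}$) is additive and satisfies $g(\alpha x)=\alpha^{3}g(x)$ for $\alpha\in\FF$, so its image is an $\FF^{3}$-subspace of $\FF$. Writing $\tau=\tau_{\cK,a}$ and $\cC=\cK\oplus\cW$ as in Example \ref{ex:tauKa}, additivity gives $g(\cC_{\tau})=g(\cK)+g(\cW)$, and I would compute the two pieces separately.

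On $\cK$, since $\tau$ fixes $\cK$ pointwise we have $b*b=\bar b\cdot\bar b$, hence $g(b)=n(b,\bar b^{\cdot 2})=b^{\cdot 3}+\bar b^{\cdot 3}$, the trace of $b^{\cdot 3}$. Choosing $k\in\cK$ orthogonal to $1$ (so $k+\bar k=0$) and writing $b=s1+tk$, additivity of cubing in characteristic $3$ together with $k^{\cdot 3}+\bar k^{\cdot 3}=(k+\bar k)^{\cdot 3}=0$ yields $g(b)=-s^{3}$; thus $g(\cK)=\FF^{3}$.

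On $\cW$ I would use $\bar x=-x$ for $x\in\cW$, so that $x*x=\tau(x)\cdot\tau^{2}(x)$, and then \eqref{eq:sigma_x}–\eqref{eq:Phi_sigma} give
\[
g(x)=n\bigl(x,\tau(x)\times\tau^{2}(x)\bigr)=\Phi\bigl(x,\tau(x),\tau^{2}(x)\bigr)+\overline{\Phi\bigl(x,\tau(x),\tau^{2}(x)\bigr)}.
\]
Expanding $x=\sum_i b_i\cdot w_i$ and using that $\Phi$ is alternating and $\cK$-trilinear with $\Phi(w_1,w_2,w_3)=a$, the value $\Phi(x,\tau(x),\tau^{2}(x))$ equals the circulant determinant $\bigl(b_1^{\cdot 3}+b_2^{\cdot 3}+b_3^{\cdot 3}-3b_1b_2b_3\bigr)a$, which in characteristic $3$ collapses to $(b_1+b_2+b_3)^{\cdot 3}a$. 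As $x$ ranges over $\cW$ the element $d\bydef b_1+b_2+b_3$ ranges over all of $\cK$, so $g(\cW)=\{\,d^{\cdot 3}a+\overline{d^{\cdot 3}a}:d\in\cK\,\}$; in particular $g(w_1)=a+\bar a\in g(\cC_{\tau})$.

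The \emph{hard part} is to identify the $\FF^{3}$-subspace $g(\cC_{\tau})=g(\cK)+g(\cW)$ with the \emph{field} $\FF^{3}(a+\bar a)$. Writing $d=s1+tk$ exhibits $g(\cC_{\tau})$ as the $\FF^{3}$-span of $\{1,\,a+\bar a,\,k^{\cdot 3}a+\overline{k^{\cdot 3}a}\}$, so $\dim_{\FF^{3}}g(\cC_{\tau})\le 3$. I would then invoke the trichotomy of \cite{Eld97}: as $1$ is an idempotent of the Okubo algebra $\cC_{\tau}$, its image $g(\cC_{\tau})$ is a field, namely $\FF^{3}$ or a purely inseparable cubic extension of $\FF^{3}$. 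Since $a+\bar a\in\FF$ we have $(a+\bar a)^{3}\in\FF^{3}$, so $\FF^{3}(a+\bar a)\subseteq g(\cC_{\tau})$ is a subfield of degree $1$ or $3$. If $a+\bar a\notin\FF^{3}$ this subfield already has degree $3$ and the degree bound forces $g(\cC_{\tau})=\FF^{3}(a+\bar a)$; if $a+\bar a\in\FF^{3}$ the spanning set collapses to two elements, so $g(\cC_{\tau})$ has $\FF^{3}$-dimension at most $2$ and, being a field of degree $1$ or $3$, must be $\FF^{3}=\FF^{3}(a+\bar a)$. Alternatively, one can avoid \cite{Eld97} entirely: when $a\notin\FF$ one has $\cK=\FF[a]$, and parametrizing $d=s1+ta$ gives $g(\cW)=\FF^{3}(a+\bar a)+\FF^{3}\bigl(a^{\cdot 4}+\bar a^{\cdot 4}\bigr)$; the Newton recursion yields $a^{\cdot 4}+\bar a^{\cdot 4}=(a+\bar a)^{4}-(a+\bar a)^{2}-1$, and reducing modulo $(a+\bar a)^{3}\in\FF^{3}$ shows $g(\cC_{\tau})=\FF^{3}+\FF^{3}(a+\bar a)+\FF^{3}(a+\bar a)^{2}=\FF^{3}(a+\bar a)$, the remaining case $a\in\FF$ (so $a=\pm1$) being immediate.
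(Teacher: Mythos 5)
Your proposal is correct; every computation checks out (in particular the closed formula $g(x)=d^{\cdot 3}a+\overline{d^{\cdot 3}a}$ for $x\in\cW$ with $d=b_1+b_2+b_3$ specializes to the paper's values $g(w_1)=a+\bar a$ and $g(k\cdot w_1)=-\beta n(k)^2$, up to an irrelevant cube sign). The computational core is the same as in the paper --- both reduce, via semilinearity of $g$ and the decomposition $\cC=\cK\oplus\cW$, to evaluating $g$ on $\cK\cdot w_1$ through the hermitian/cross-product structure --- but you organize it more uniformly: where the paper computes $w_2\times w_3$ explicitly and then evaluates $g$ at the two basis vectors $w_1$ and $k\cdot w_1$ in coordinates $a=\alpha+\beta k$, you get the whole of $g(\cW)$ at once from the circulant determinant of the alternating form $\Phi$ (essentially the relation \eqref{eq:Phi_g} that the paper itself derives inside the proof of Theorem \ref{th:quadratic_auto}). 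The genuine divergence is in the endgame. The paper finishes by the explicit identity $g(k\cdot w_1)=\tfrac{1}{\beta^3}(1-\alpha^2)^2=\tfrac{1}{\beta^3}(1+\alpha^3\cdot\alpha+\alpha^2)$, which simultaneously gives the inclusion $g(\cC_\tau)\subseteq\FF^3(a+\bar a)$ and (implicitly, via the nonzero $\alpha^2$-component) the reverse inclusion. Your first route instead combines the dimension bound $\dim_{\FF^3}g(\cC_\tau)\le 3$ with the structural fact from \cite{Eld97} that $g(\cO)$ is a field of degree $1$ or $3$ over $\FF^3$; this is legitimate (the paper quotes exactly this result just before the proposition, and $\cC_\tau$ is an Okubo algebra with idempotent $1$ by Theorem \ref{th:quadratic_auto}), and it buys a shorter argument at the cost of importing an external classification. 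Your second route, via the Newton recursion $a^{\cdot 4}+\bar a^{\cdot 4}=(a+\bar a)^4-(a+\bar a)^2-1$, is self-contained and is really just a basis-free repackaging of the paper's final polynomial identity; it has the small advantage of making the reverse inclusion $\FF^3(a+\bar a)\subseteq g(\cC_\tau)$ completely explicit, a point the paper leaves to the reader.
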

\begin{proof}
As in Example \ref{ex:tauKa}, $\cC=\cK\oplus\cK\cdot w_1\oplus\cK\cdot w_2\oplus\cK\cdot w_3$, with $\cW=\cK^\perp=\cK\cdot w_1\oplus\cK\cdot w_2\oplus\cK\cdot w_3$, $\sigma(w_i,w_i)=0$, $\sigma(w_i,w_j)-1$, for $1\leq i\neq j\leq 3$, and $\Phi(w_1,w_2,w_3)=a$. The automorphism $\tau=\tau_{\cK,a}$ restricts to the identity on $\cK$ and satisfies $\tau(w_i)=w_{i+1}$ (indices modulo $3$). Take $k\in\cK$ with  $n(1,k)=0\neq n(k)$, so that $\cK=\FF 1\oplus \FF k$.

Since $g(x)=n(x,x*x)$ is semilinear and $\tau$-invariant, $g\bigl(\cC_{\tau_{\cK,a}}\bigr)=\FF^3+g\bigl(\cK\cdot w_1\bigr)$, and it is enough to compute $g(w_1)$ and $g(k\cdot w_1)$. From
\[
\sigma(w_i,w_2\times w_3)=\Phi(w_i,w_2,w_3)=\begin{cases} 0&\text{if $i=2,3$,}\\ a&\text{if $i=1$,}
\end{cases}
\]
we obtain $w_2\times w_3=\bar a\cdot (w_2+w_3-w_1)$, so
\[
w_1*w_1=\tau(w_1)\cdot \tau^2(w_1)=w_2\cdot w_3=-\sigma(w_2,w_3)+w_2\times w_3=1+\bar a\cdot(w_2+w_3-w_1).
\]
Also, $n(\cK,\cW)=0$ and $n(x,y)=\sigma(x,y)+\sigma(y,x)=\sigma(x,y)+\overline{\sigma(x,y)}$ for any $x,y\in\cW$. Hence,
\[
\begin{split}
g(w_1)&=n\bigl(w_1,1+\bar a\cdot (w_2+w_3-w_1)\bigr)\\
 &=n(w_1,\bar a\cdot w_2)+n(w_1,\bar a\cdot w_3)\\
 &=\sigma(w_1,\bar a\cdot w_2)+\overline{\sigma(w_1,\bar a\cdot w_2)}+
   \sigma(w_1,\bar a\cdot w_3)+\overline{\sigma(w_1,\bar a\cdot w_3)}\\
   &=-a-\bar a-a-\bar a=a+\bar a,\\[6pt]
g(k\cdot w_1)&= n\bigl(k\cdot w_1),(k\cdot w_1)*(k\cdot w_1)\bigr)
 =n\bigl(k\cdot w_1,\tau(k\cdot w_1)\cdot\tau^2(k\cdot w_1)\bigr)\\
 &=n\bigl(k\cdot w_1,(k\cdot w_2)\times(k\cdot w_3)\bigr)=-n(k)n\bigl(k\cdot w_1,w_2\times w_3\bigr)\\
 &=-n(k)n\bigl(k\cdot w_1,\bar a\cdot(w_2+w_3-w_1)\bigr)\\
 &=-n(k)(k\cdot a+\overline{k\cdot a}).
\end{split}
\]
But $a=\alpha+\beta k$, $\alpha,\beta\in\FF$, with  $\alpha^2+\beta^2n(k)=1$, so $a+\bar a=-\alpha$, and
\[
-n(k)(k\cdot a+\overline{k\cdot a})=\beta n(k)^2=\begin{cases}
 0\ \text{if $\beta=0$, so that $\alpha=\pm 1\in\FF^3$,}&\\
 \frac{1}{\beta^3}\bigl(\beta^2n(k)\bigr)^2=\frac{1}{\beta^3}(1-\alpha^2)^2\in\FF^3(\alpha),&\text{otherwise.}
 \end{cases}
\]
Hence $g(k\cdot w_1)\in\FF^3(a+\bar a)$, as required.
\end{proof}

\begin{corollary}\label{co:g_tau_Ka}
The Petersson algebra $\cC_{\tau_{\cK,a}}$ is the split Okubo algebra if and only if $a\in\cK^{\cdot 3}$.
\end{corollary}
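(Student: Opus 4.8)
The plan is to combine Proposition~\ref{pr:g_tau_Ka} with the splitting criterion of \cite{Eld97} recalled just before the statement, and then to reduce everything to an elementary fact about cubes in the quadratic \'etale algebra $\cK$. First I would observe that $\cC_{\tau_{\cK,a}}$ is an Okubo algebra (Theorem~\ref{th:quadratic_auto}) that always contains an idempotent, namely the unit $1$ of $(\cC,\cdot,n)$: any automorphism fixes $1$ and $\bar 1=1$, so $1*1=\tau(\bar 1)\cdot\tau^2(\bar 1)=1\cdot 1=1$. Hence the criterion of \cite{Eld97} applies, and $\cC_{\tau_{\cK,a}}$ is the split Okubo algebra if and only if $g\bigl(\cC_{\tau_{\cK,a}}\bigr)=\FF^3$. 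By Proposition~\ref{pr:g_tau_Ka} this value equals $\FF^3(a+\bar a)$, so splitness is equivalent to $a+\bar a\in\FF^3$ (note that $a+\bar a$ is the trace of $a$, an element of $\FF 1$).

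Thus everything reduces to the purely algebraic claim that, for $a\in\cK$ with $n(a)=1$,
\[
a\in\cK^{\cdot 3}\iff a+\bar a\in\FF^3 .
\]
To prove it I would write $\cK=\FF 1\oplus\FF k$ with $\bar k=-k$ and $k^{\cdot 2}=d\,1$ for some $d\in\FF^\times$ (possible since $\charac\FF\neq 2$; here $d=-n(k)$, and $d\neq 0$ is exactly the \'etaleness of $\cK$). For $a=\alpha 1+\beta k$ the norm condition reads $\alpha^2-\beta^2 d=1$ and the trace is $a+\bar a=-\alpha$, so $a+\bar a\in\FF^3$ is the same as $\alpha\in\FF^3$. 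Using the Frobenius cube in characteristic $3$ together with $k^{\cdot 3}=k\cdot k^{\cdot 2}=d\,k$, any cube has the form $(\gamma 1+\delta k)^{\cdot 3}=\gamma^3 1+\delta^3 d\,k$; hence $a\in\cK^{\cdot 3}$ if and only if $\alpha\in\FF^3$ and $\beta\in d\,\FF^3$. The forward implication of the claim is then immediate.

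The hard part, and the only nonformal step, is the converse: assuming $\alpha=\gamma^3\in\FF^3$ I must deduce $\beta\in d\,\FF^3$. Here I would exploit the norm relation, which gives $\beta^2 d=\alpha^2-1=\gamma^6-1=(\gamma^2-1)^{\cdot 3}$ (Frobenius again). Setting $s=\beta/d$ and $t=(\gamma^2-1)/d$ this becomes $s^2=t^3$, and the key trick is that $s^2=t^3$ forces $s\in\FF^3$: if $s=0$ this is clear, while if $s\neq 0$ then $t\neq 0$ and $s=s^3/s^2=s^3/t^3=(s/t)^3$. Therefore $\beta=ds\in d\,\FF^3$, so $a=(\gamma 1+\delta k)^{\cdot 3}$ with $\delta^3=s$, proving $a\in\cK^{\cdot 3}$ and closing the equivalence. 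Combining this lemma with the reduction of the first paragraph yields the Corollary; it also re-derives, through Theorem~\ref{th:quadratic_auto}, that the split case is precisely the one in which $\tau_{\cK,a}$ is conjugate to $\tau_{\cK,1}$.
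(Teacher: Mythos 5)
Your proposal is correct and follows essentially the same route as the paper: reduce splitness to $g(\cC_{\tau_{\cK,a}})=\FF^3$, i.e.\ $a+\bar a\in\FF^3$, via Proposition~\ref{pr:g_tau_Ka} and the criterion from \cite{Eld97}, and then verify the elementary equivalence $a\in\cK^{\cdot 3}\iff a+\bar a\in\FF^3$ for norm-one $a$ using Frobenius additivity of cubing and the norm relation (your ``$s^2=t^3\Rightarrow s\in\FF^3$'' step is the same trick the paper packages as $\beta k=-\frac{1}{\beta^2 n(k)}(\beta k)^{\cdot 3}$). The only addition is your explicit check that $1$ is an idempotent so the splitting criterion applies, which the paper leaves implicit.
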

\begin{proof}
We have $g\bigl(\cC_{\tau_{\cK,a}}\bigr)=\FF^3$ if and only if $a+\bar a\in\FF^3$. But with $a=\alpha+\beta k$ as in the proof of Proposition \ref{pr:g_tau_Ka}, $a\in\cK^{\cdot 3}$ if and only if $\alpha=-(a+\bar a)\in\FF^3$. Indeed, if $a=(\mu+\nu k)^{\cdot 3}=\mu^3-n(k)\nu^3k$, then $-(a+\bar a)=\alpha=\mu^3$. And conversely, if $\alpha=\mu^3$, then $\beta^2n(k)=1-\alpha^2=1-\mu^6=(1-\mu^2)^3\in\FF^3$, and  either $\beta =0$ or $\beta k=-\frac{1}{\beta^2n(k)}(\beta k)^{\cdot 3}\in\cK^{\cdot 3}$. Hence both $\alpha$ and $\beta k$ are in $\cK^{\cdot 3}$, so $a\in\cK^{\cdot 3}$.
\end{proof}

Therefore, the situation for idempotents in Okubo algebras over fields of characteristic $3$ is now almost settled. Only a couple of things remain to be checked:

\begin{theorem}\label{th:Idempotents_Okubo_3}
Let $(\cO,*,n)$ be an Okubo algebra over a field $\FF$ of characteristic $3$.
\begin{enumerate}
\item If $\dim_{\FF^3}g(\cO)=8$, then $(\cO,*,n)$ contains no idempotents.
\item If $\dim_{\FF^3}g(\cO)=3$, $(\cO,*,n)$ contains a unique quadratic idempotent for each class $[\cK,a]$ with $a\not\in\cK^{\cdot 3}$ and $g(\cO)=\FF^3(a+\bar a)$.
\item If $(\cO,*,n)$ is the split Okubo algebra, then it contains:
\begin{enumerate}
\item a unique quaternionic idempotent,
\item a conjugacy class of quadratic idempotents for each isomorphism class of quadratic \'etale algebras,
\item a unique conjugacy class of singular idempotents.
\end{enumerate}
\end{enumerate}
\end{theorem}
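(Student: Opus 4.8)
The plan is to reduce the entire classification to the analysis of order $3$ automorphisms of the split Cayley algebra carried out in Theorem \ref{th:char3} and Theorem \ref{th:quadratic_auto}, combined with the conjugacy criterion of Proposition \ref{pr:conjugation_class} and the computation of the invariant $g$ in Proposition \ref{pr:g_tau_Ka} and Corollary \ref{co:g_tau_Ka}. The bridge throughout is the correspondence $e\leftrightarrow\tau_e$ of \eqref{eq:e_tau}: an idempotent $e$ of $(\cO,*,n)$ turns $(\cO,\cdot,n)$ (with $x\cdot y=(e*x)*(y*e)$) into a Cayley algebra, necessarily split by Theorem \ref{th:char3}, on which $\tau_e$ is an order $3$ automorphism with $(\cO,*,n)=\cC_{\tau_e}$. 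By Definition \ref{df:idempotents_3} and Theorem \ref{th:char3}, $e$ is quaternionic, quadratic, or singular exactly when $\tau_e$ is of type (1), (2), or (4); type (3) produces a para-Cayley algebra and hence does not occur when $\cO$ is an Okubo algebra (Theorem \ref{th:paraH_or_Okubo}).

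First I would dispatch item (1). Since $(\cO,*,n)$ admits an idempotent only when $g(\cO)=\FF^3$ or $g(\cO)$ is a purely inseparable cubic extension of $\FF^3$ (\cite{Eld97}, as recalled before the theorem), in which cases $\dim_{\FF^3}g(\cO)$ equals $1$ or $3$, the hypothesis $\dim_{\FF^3}g(\cO)=8$ forces the absence of idempotents at once.

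The quadratic idempotents are the common core of (2) and (3)(b). By Theorem \ref{th:quadratic_auto} every type (2) automorphism is conjugate to some $\tau_{\cK,a}$ of Example \ref{ex:tauKa}, so each quadratic idempotent $e$ acquires a well-defined class $\cl(e)=[\cK,a]$, and Proposition \ref{pr:conjugation_class} turns $e\mapsto\cl(e)$ into a bijection from conjugacy classes of quadratic idempotents onto the set of classes realized inside $\cO$. To identify the realized classes I would use that $g(\cC_{\tau_{\cK,a}})=\FF^3(a+\bar a)$ (Proposition \ref{pr:g_tau_Ka}) together with the fact that two Okubo algebras with idempotents are isomorphic iff their $g$-images agree (\cite{Eld97}); thus $\cC_{\tau_{\cK,a}}\cong\cO$ precisely when $\FF^3(a+\bar a)=g(\cO)$, while $\cC_{\tau_{\cK,a}}$ is split iff $a\in\cK^{\cdot 3}$ (Corollary \ref{co:g_tau_Ka}). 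In the non-split case (2) this yields exactly the stated conditions $a\notin\cK^{\cdot 3}$ and $g(\cO)=\FF^3(a+\bar a)$; since a non-split $\cO$ has no quaternionic idempotent (\cite{CEKT}) and no singular idempotent (Corollary \ref{co:singular}), all its idempotents are quadratic and the bijection is complete. In the split case $g(\cO)=\FF^3$ and $\cC_{\tau_{\cK,a}}$ must be split, so $a\in\cK^{\cdot 3}$; writing $a=b^{\cdot 3}$ gives $a+\bar a=(b+\bar b)^{\cdot 3}\in\FF^3$ automatically, the class collapses to $[\cK,1]$, and $[\cK,1]=[\cK',1]$ iff $\cK\cong\cK'$ by Theorem \ref{th:quadratic_auto}. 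This gives precisely one conjugacy class of quadratic idempotents for each isomorphism type of quadratic \'etale algebra, establishing (3)(b). Finally, (3)(a) is the uniqueness of the quaternionic idempotent, quoted from \cite{CEKT}, and (3)(c) holds because singular idempotents correspond to type (4) automorphisms, which Theorem \ref{th:char3}(4) exhibits as a single conjugacy class realizing the split Okubo algebra.

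The step I expect to be most delicate is the bookkeeping uniting (2) and (3)(b): verifying that the realizability conditions extracted from $g$ are simultaneously necessary and sufficient and that they interact correctly with the splitting criterion. The potential trap is the interplay between $a\in\cK^{\cdot 3}$ (Corollary \ref{co:g_tau_Ka}) and the value $\FF^3(a+\bar a)$ (Proposition \ref{pr:g_tau_Ka}), which must be arranged so that in the split case the datum reduces to $\cK$ alone, whereas in the non-split case the finer datum $a$ genuinely survives; one must also reconcile the phrase ``a unique quadratic idempotent for each class'' with the conjugacy-class bijection of Proposition \ref{pr:conjugation_class}, so that each admissible $[\cK,a]$ is counted once and no idempotent escapes the quaternionic/quadratic/singular trichotomy.
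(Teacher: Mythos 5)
Your overall strategy is the paper's: reduce everything to the classification of order $3$ automorphisms via $e\leftrightarrow\tau_e$, use Proposition \ref{pr:conjugation_class} to match conjugacy classes of quadratic idempotents with classes $[\cK,a]$, and use Proposition \ref{pr:g_tau_Ka} and Corollary \ref{co:g_tau_Ka} to decide which classes are realized. Items (1), (3)(a), (3)(b) and (3)(c) are handled exactly as in the paper, and correctly.

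There is, however, one genuine gap, precisely at the point you flag as ``delicate'' but then do not resolve. Item (2) asserts a \emph{unique quadratic idempotent} for each admissible class $[\cK,a]$ --- not a unique conjugacy class. (Compare the wording of (3)(b), where ``conjugacy class'' appears explicitly; and note that a non-split Okubo algebra with $\dim_{\FF^3}g(\cO)=3$ does contain more than one idempotent, as the last Proposition of the paper shows, so the distinction matters.) Your argument via Proposition \ref{pr:conjugation_class} only delivers a bijection between conjugacy classes of quadratic idempotents and admissible classes $[\cK,a]$; to finish (2) you must show that each such conjugacy class is a singleton, i.e.\ that every automorphism of $(\cO,*,n)$ fixes every idempotent. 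The paper does this by an argument that is not a formal consequence of anything you cite: by \cite[Theorem 13]{Eld99}, for such $\cO$ the group $\Aut(\cO,*,n)$ is a four-dimensional unipotent affine algebraic group whose underlying variety is affine $4$-space; on the other hand $\Aut(\cO,*,n)$ contains $\SStab_{\AAut(\cO,*,n)}(e)(\FF)=\CCentr_{\AAut(\cO,\cdot,n)}(\tau_e)(\FF)$, which by Theorem \ref{th:tauKa} contains $\bfK(\FF)\times\bfN(\FF)\simeq\bfG_a^4(\FF)$; a dimension count then forces $\Aut(\cO,*,n)=\bfK(\FF)\times\bfN(\FF)$, so every automorphism stabilizes $e$. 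Without some such input your proof establishes a weaker statement than the one claimed.
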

\begin{proof} The uniqueness of the conjugacy class of singular idempotents in the split Okubo algebra needs to be proved, but if $e$ and $f$ are two such idempotents, using the notation in the proof of Proposition \ref{pr:conjugation_class}, $\tau_e$ is the unique, up to conjugacy, order $3$ automorphism of $(\cO,\cdot,n)$ in Theorem \ref{th:char3}.(4), and the same happens for $\tau_f$ in $\Aut(\cO,\diamond,n)$. Then there is an isomorphism $\varphi:(\cO,\cdot,n)\rightarrow (\cO,\diamond,n)$ with $\varphi\circ\tau_e=\tau_f\circ\varphi$ and, as in the proof of \ref{pr:conjugation_class}, $\varphi$ is an automorphism of $(\cO,*,n)$ and $\varphi(e)=f$.

Also, if $\dim_{\FF^3}g(\cO)=3$, only quadratic idempotents exist (see Corollary \ref{co:singular}), and there exists a conjugacy class of quadratic idempotentes for each class $[\cK,a]$ with $a\not\in\cK^{\cdot 3}$ and $g(\cO)=\FF^3(a+\bar a)$. It remains to show that these conjugacy classes are singletons. In other words, that the idempotents are fixed by automorphisms in this case. Note first that here $\FF$ is not perfect, and hence it is infinite. Let $e$ be a quadratic idempotent, let $(\cO,\cdot,n)$ be the Cayley algebra where $x\cdot y=(e*x)*(y*e)$ as in \eqref{eq:exye}, and let  $\bfG=\AAut(\cO,\cdot,n)$. The  automorphism group $\Aut(\cO,*,n)$ contains the group of rational points of  $\CCentr_{\bfG}(\tau)=\SStab_{\AAut(\cO,*,n)}(e)$. But the arguments in the proof of \cite[Theorem 13]{Eld99} show that the matrix group $\Aut(\cO,*,n)$ is a four-dimensional unipotent affine algebraic group over $\FF$ in the sense of \cite[Chapter 4]{Waterhouse}, and it is isomorphic, as an algebraic set, to the affine space of dimension $4$ (i.e.; its coordinate ring is isomorphic to the ring of polynomials in four variables over $\FF$). However, with the notation in Theorem \ref{th:tauKa}, $\CCentr_{\bfG}(\tau)(\FF)$ contains the closed subgroup $\bfK(\FF)\times\bfN(\FF)$, isomorphic to $\bfG_a^4(\FF)$. By dimension count, $\Aut(\cO,*,n)=\bfK(\FF)\times\bfN(\FF)$, and hence $e$ is fixed by any automorphism.
\end{proof}

\begin{remark}\label{re:dim_gO3}
In the case of $\dim_{\FF^3}g(\cO)=3$ in Theorem \ref{th:Idempotents_Okubo_3}, the arguments in the proof show that $\Aut(\cO,*,n)$ coincides with the group of rational points of $\bigl(\bfK\rtimes\boldmu_{3[\cK]}\bigr)\times\bfN$ in Theorem \ref{th:tauKa}, but then this gives
\[
\Aut(\cO,*,n)=\Bigl(\bigl(\bfK\rtimes\boldmu_{3[\cK]}\bigr)\times\bfN\Bigr)(\FF)=\bfH(\FF),
\]
and this shows that the projection $\bfH\rightarrow \sfC_2$ is not surjective on rational points. In particular, the short exact sequence in Theorem \ref{th:tauKa} does not split.
\end{remark}

Theorem \ref{th:Idempotents_Okubo_3} shows, in particular, that over an algebraically closed field of characteristic $3$, the Okubo algebra contains a unique quaternionic idempotent, a unique conjugacy class of quadratic idempotents, and a unique conjugacy class of singular idempotents. This was proved in \cite{Eld_Strade}.

\smallskip

The singular idempotents of the split Okubo algebra are strongly connected with the quaternionic idempotent.

\begin{proposition}\label{pr:quaternionic_singular}
Let $(\cO,*,n)$ be the split Okubo algebra over a field $\FF$ of characteristic $3$, and let $e$ be its quaternionic idempotent. Then:
\begin{romanenumerate}
\item The set of singular idempotents is given by:
\begin{multline*}
\{\text{singular idempotents of $(\cO,*,n)$}\}\\
=\{e+x: 0\neq x\in\Centr_{(\cO,*,n)}(e)\cap\Centr_{(\cO,*,n)}(e)^\perp\}.
\end{multline*}
\item The set of idempotents is:
\[
\{\text{idempotents of $(\cO,*,n)$}\}=\{e+x: x\in\Centr_{(\cO,*,n)}(e),\ x*x=0\}.
\]
\end{romanenumerate}
\end{proposition}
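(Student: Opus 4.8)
The plan is to reduce both statements to a single idempotency equation and then exploit that, for the quaternionic idempotent $e$, the associated order $3$ automorphism $\tau:=\tau_e$ (with $\Centr_{(\cO,*,n)}(e)=\Fix(\tau)$ by Proposition~\ref{pr:fix_cent}) satisfies $(\tau-\id)^2=0$ (Theorem~\ref{th:char3}(1)). From \eqref{eq:e_tau} one has $x*e=n(e,x)e-\tau(x)$, and the companion identity $e*x=n(e,x)e-\tau^2(x)$ follows from $L_eR_e=R_eL_e=\id$ (a consequence of \eqref{eq:xyx}, since $n(e)=1$). Expanding $(e+x)*(e+x)$ and using $1+\tau+\tau^2=(\tau-\id)^2$ in characteristic $3$, I would first record the key equivalence
\[
e+x\ \text{is idempotent}\iff x*x=n(e,x)e+(\tau-\id)^2(x)=n(e,x)e,
\]
the last equality because $e$ is quaternionic. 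This single formula drives everything.

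For part (ii), set $\nu=n(e,x)$ and $d=(\tau-\id)(x)$. Since $(\tau-\id)^2=0$, $d\in\Fix(\tau)=\Centr_{(\cO,*,n)}(e)$; moreover $n(e,d)=0$ and $n(d)=0$, because $d$ lies in $\im(\tau-\id)$, which I will identify with the radical of $n$ restricted to $\Centr_{(\cO,*,n)}(e)$. I would then show $d=0$. Applying $\tau$ to $x*x=\nu e$ gives $x*d+d*x+d*d=0$; identity \eqref{eq:xyx} (together with $n(x,d)=-3n(x)=0$) gives $x*d+d*x=-\nu d$, hence $d*d=\nu d$; but the para-square formula on $\Fix(\tau)$, namely $y*y=(n(e,y)^2-n(y))e-n(e,y)y$, forces $d*d=0$, so $\nu d=0$. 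Thus $d\neq0$ would force $\nu=0$, whence $x*x=0$ with $x$ of zero trace; writing $x*x=\tau(x)\cdot\tau^2(x)$ and $\tau^{\pm1}(x)=x\pm d$ reduces $x*x=0$ to $[x,d]^{\cdot}=-n(x)e$, and since $[x,d]^{\cdot}\perp e$ this gives $[x,d]^{\cdot}=0$. In the canonical basis of Theorem~\ref{th:char3}(1), where $\tau-\id$ sends $u_3\mapsto u_2$ and $v_2\mapsto-v_3$ and annihilates the remaining basis vectors, the $u_1$- and $v_1$-components of $[x,(\tau-\id)x]^{\cdot}$ are exactly the squares of the $v_2$- and $u_3$-coordinates of $x$; their vanishing forces $d=0$, a contradiction. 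Hence $x\in\Centr_{(\cO,*,n)}(e)$, and the para-square formula then yields $\nu=0$ and $x*x=0$. The converse is immediate: if $x\in\Centr_{(\cO,*,n)}(e)$ and $x*x=0$, the same formula gives $n(e,x)=0$, so $x*x=n(e,x)e$ and $e+x$ is idempotent.

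For part (i), I would identify $\cZ:=\Centr_{(\cO,*,n)}(e)\cap\Centr_{(\cO,*,n)}(e)^\perp$ with the radical of $n|_{\Centr(e)}$, equal to $\im(\tau-\id)$; it is $2$-dimensional with $\cZ*\cZ=0$, so $\{e+x:0\neq x\in\cZ\}$ consists of idempotents by part (ii). These are precisely the singular ones. Indeed $\SStab_{\AAut(\cO,*,n)}(e)=\CCentr_{\AAut(\cO,\cdot,n)}(\tau)$ is the derived subgroup of $\bfP_{\{\veps_1\}}$ (Theorem~\ref{th:parabolics}(1)), it preserves $\cZ=\im(\tau-\id)$, and it contains the $\SSL_2$ generated by $\bfU_{\pm\veps_1}$ acting on $\cZ$ as its natural $2$-dimensional module, hence transitively on $\cZ\setminus\{0\}$; so all these idempotents are conjugate. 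A direct computation for one representative, say $x=u_2$, shows that the centralizer of $e+u_2$ is $\FF e$ plus a $3$-dimensional totally isotropic nilpotent subspace, containing neither a para-quaternion nor a para-quadratic subalgebra, so $e+u_2$—and hence every $e+x$ with $0\neq x\in\cZ$—is singular. Conversely, a singular idempotent $f$ is $\varphi(e+u_2)$ for some $\varphi\in\Aut(\cO,*,n)$ by the uniqueness of the singular class (Theorem~\ref{th:Idempotents_Okubo_3}); as $\varphi(e)$ is again quaternionic and the quaternionic idempotent is unique, $\varphi(e)=e$, so $\varphi\in\SStab(e)$ preserves $\cZ$ and $f=e+\varphi(u_2)$ with $\varphi(u_2)\in\cZ\setminus\{0\}$.

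The main obstacle is the forward implication of (ii)—that $x*x=n(e,x)e$ forces $x\in\Centr_{(\cO,*,n)}(e)$. This is exactly where the nilpotent shape $(\tau-\id)^2=0$ of the quaternionic idempotent is indispensable, and where the explicit basis computation (the squares $b_2^2$, $a_3^2$ appearing as the $u_1$- and $v_1$-components of the commutator) seems unavoidable; once it is in place, both parts follow formally from \eqref{eq:e_tau}, \eqref{eq:xyx}, the para-square formula on $\Fix(\tau)$, and the group-theoretic description of $\SStab(e)$.
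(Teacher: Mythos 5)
Your argument is correct, and for the substantive half of part (ii) it follows a genuinely different route from the paper. The paper proves that every idempotent lies in $e+\{x\in\Centr_{(\cO,*,n)}(e): x*x=0\}$ by splitting into types: singular idempotents are handled by part (i), and for quadratic ones it extends scalars to $\overline{\FF}$, uses the conjugacy of all quadratic idempotents there together with the invariance of $\Centr_{(\cO,*,n)}(e)$ under automorphisms, and exhibits the single witness $1+u_1$ inside the centralizer. You instead isolate the clean equivalence ``$e+x$ idempotent $\iff x*x=n(e,x)e+(\tau-\id)^2(x)$'' (which is worth recording in its own right, as it explains exactly where the quaternionic hypothesis $(\tau-\id)^2=0$ enters) and then force $x\in\Fix(\tau)$ by a direct argument with $d=(\tau-\id)(x)$, culminating in the commutator computation whose $u_1$- and $v_1$-components are the squares $b_2^2$, $a_3^2$. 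This buys you a proof that never leaves the ground field and never invokes the classification of quadratic idempotents, at the cost of an explicit basis computation; I verified that computation and the identity $x*d+d*x=-\nu d$ (which does follow from the two linearizations of \eqref{eq:xyx}, though you should spell out which linearization you use, since $x*d+d*x$ is not literally an instance of that identity). Your part (i) is closer to the paper's: the paper checks singularity of each $e+\alpha u_2+\beta v_3$ by exhibiting a three-dimensional isotropic subspace in its centralizer case by case, whereas you reduce to the single representative $e+u_2$ via the transitive $\SSL_2$-action of $[\bfL_{\{\veps_1\}},\bfL_{\{\veps_1\}}]$ on $\cZ\setminus\{0\}$ and then quote the uniqueness of the singular class from Theorem \ref{th:Idempotents_Okubo_3} for the converse (legitimate, since that theorem precedes this proposition and does not depend on it). Two small points to tighten: the claimed shape of $\Centr_{(\cO,*,n)}(e+u_2)$ is asserted rather than computed, and what you actually need is that the three-dimensional isotropic subspace is the radical of the restricted norm, so that every two-dimensional subspace of the centralizer is degenerate; and in the final step of (ii) you should dispose of the degenerate case $x\in\FF e$ (where $e+x=0$ must be excluded) before invoking the para-square formula.
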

\begin{proof}
Let $\tau$ be the order $3$ automorphism in \eqref{eq:e_tau} attached to the quaternionic idempotent $e$, and let $(\cO,\cdot,n)$ be the associated Cayley algebra, with unity $1=e$, and with multiplication given by $x\cdot y=(e*x)*(y*e)$, so that $x*y=\tau(\bar x)\cdot\tau^2(\bar y)$, for any $x,y\in\cO$.

By Theorem \ref{th:char3}.(1), there is a canonical basis $\{e_1,e_2,u_1,u_2,u_3,v_1,v_2,v_3\}$ of $(\cO,\cdot,n)$ such that
\[
\tau(u_i)=u_i,\ i=1,2,\quad \tau(u_3)=u_3+u_2.
\]
The proof of Theorem \ref{th:char3} also shows that $\Fix(\tau)=\espan{e_1,e_2,u_1,u_2,v_1,v_3}$, so that 
$\cR\bydef \Centr_{(\cO,*,n)}(e)\cap\Centr_{(\cO,*,n)}(e)^\perp=\FF u_2+\FF v_3$. We have $\cR\cdot\cR=0$, so that $\cR*\cR=0$ too and hence, for any $x\in \cR$:
\[
(1+x)*(1+x)=\tau(\overline{1+x})\cdot\tau^2(\overline{1+x})=(1-x)\cdot (1-x)=1-2x=1+x,
\]
and $1+x=e+x$ is an idempotent. If $x\neq 0$, this idempotent is not the quaternionic idempotent, and hence to check that this is a singular idempotent, it is enough to find a three-dimensional isotropic subspace in its centralizer. But $x=\alpha u_2+\beta v_3$, with $\alpha,\beta\in\FF$. If $\alpha\neq 0=\beta$, then $\espan{u_2,v_3,v_1}$ is contained in the centralizer of $e+x$, if $\alpha=0\neq\beta$, then $\espan{u_2,v_3,u_1}$ works, and if $\alpha\neq 0\neq\beta$, then $\espan{u_2,v_3,(e_1-e_2)+\alpha^{-1}\beta u_1+\alpha\beta^{-1}v_1}$ does the job.

Conversely, if now $f$ is a singular idempotent, and $\tau'$ is the associated order $3$ automorphism: $\tau'(x)=f*(f*x)$ for any $x\in\cO$, the proof of Theorem \ref{th:char3} shows that there is a canonical basis $\{e_1,e_2,u_1,u_2,u_3,v_1,v_2,v_3\}$ of $(\cO,\diamond,n)$, where $x\diamond y=(f*x)*(y*f)$ for any $x,y$, such that
\[
\tau'(u_i)=u_i,\ i=1,2,\quad \tau'(u_3)=u_3-(e_1-e_2)+u_2+v_3,
\]
and it also shows that $e=1-v_3=f-v_3$ is the quaternionic idempotent of $(\cO,*,n)$, and that $\Centr_{(\cO,*,n)}(e)$ equals $\espan{e_1,e_2,u_1,u_2,v_1,v_3}$, so that $f=e+v_3$, and $v_3\in\Centr_{(\cO,*,n)}(e)\cap\Centr_{(\cO,*,n)}(e)^\perp$. This proves the first part.

Now, for any  $x\in \Centr_{(\cO,*,n)}(e)$, Proposition \ref{pr:fix_cent} shows that $\tau(x)=e*(e*x)=x$, and $e*x=x*e=n(e,x)e-x$. If $x*x=0$, then $0=x*x=\bar x^{\cdot 2}$, so that $x^{\cdot 2}=0$ and $n(x)=0=n(x,e)$. Hence $e*x=x*e=-x$ and $(e+x)*(e+x)=e+e*x+x*e=e-2x=e+x$.

Conversely, singular idempotents are of this form by the first part of the Proposition, so it is enough to show that quadratic idempotents are of this form too, and to do that we may extend scalars and assume that $\FF$ is algebraically closed. Let us prove first that any idempotent is contained in $\Centr_{(\cO,*,n)}(e)$. Since all quadratic idempotents are conjugate (by our assumption on $\FF$ and Theorem \ref{th:Idempotents_Okubo_3}), and since $\Centr_{(\cO,*,n)}(e)$ is invariant under automorphisms (because so is the unique quaternionic idempotent $e$), it is enough to show that $\Centr_{(\cO,*,n)}(e)$ contains a quadratic idempotent. With the notations in Theorem \ref{th:char3}.(1), the element $1+u_1$ is an idempotent and it is nonsingular as $u_1\not\in\Centr_{(\cO,*,n)}(e)^\perp$.

Now, if $0\ne f\in \Centr_{(\cO,*,n)}(e)$ is an idempotent, $f\in\Centr_{(\cO,*,n)}(e)$ so $f=a+b$ with $a\in\cQ=\espan{e_1,e_2,u_1,v_1}$ and $b\in\Centr_{(\cO,*,n)}(e)\cap\Centr_{(\cO,*,n)}(e)^\perp=\espan{u_2,v_3}$. Then $a*a=a$, so by the proof of Corollary \ref{co:idempotents_paraCayley}, $(a-1)^{\cdot 2}=0$, and $f=1+c=e+c$, with $c=(a-1)+b$. Moreover, $n(c)=0=n(1,c)$, so $c^{\cdot 2}=0$ and $c*c=\bar c^{\cdot 2}=0$, as required.
\end{proof}

\smallskip

Finally,  if a non split Okubo algebra $(\cO,*,n)$ over a field $\FF$ of characteristic $3$ contains idempotents, then $\dim_{\FF^3}g(\cO)=3$ and all the idempotents are quadratic. In this case any idempotent determines all the idempotents:

\begin{proposition}
Let $(\cO,*,n)$ be an Okubo algebra over a field $\FF$ of characteristic $3$ with $\dim_{\FF^3}g(\cO)=3$, and let $e$ be an idempotent. Then the set of idempotents is given by:
\[
\{\text{idempotents of $(\cO,*,n)$}\}=\left\{e+x: x\in\rad\bigl(\Centr_{(\cO,*,n)}(e)\bigr)\right\}.
\]
\end{proposition}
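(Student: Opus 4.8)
The plan is to prove the two inclusions separately, after the usual normalization. I fix the idempotent $e$, form the split Cayley algebra $(\cO,\cdot,n)$ with unit $e$ via $x\cdot y=(e*x)*(y*e)$, and set $\tau=\tau_e$, so that $x*y=\tau(\bar x)\cdot\tau^{2}(\bar y)$ with $\bar x=n(e,x)e-x$. Since $\dim_{\FF^3}g(\cO)=3$ the algebra is non-split, so by Corollary \ref{co:singular} it has no singular idempotents and, by the quaternionic uniqueness recalled after Definition \ref{df:idempotents_3} (only the split algebra carries a quaternionic idempotent), none quaternionic; hence $e$ is \emph{quadratic} and Theorem \ref{th:quadratic_auto} together with Example \ref{ex:tauKa} applies. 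Thus $\Centr_{(\cO,*,n)}(e)=\Fix(\tau)=\cK\oplus\cK\cdot w$ with $w=w_1+w_2+w_3$, and since $\cK\cdot w$ is a totally isotropic ideal with $(\cK\cdot w)*(\cK\cdot w)=0$ and étale quotient $(\cK,\bullet)$, I record at the outset that $\rad\bigl(\Centr_{(\cO,*,n)}(e)\bigr)=\cK\cdot w$.

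For the inclusion $\supseteq$ I would argue by direct computation. For $x\in\cK\cdot w=\rad\bigl(\Centr_{(\cO,*,n)}(e)\bigr)$ one has $\tau(x)=x$, $\bar x=-x$ (as $x\perp e$), and $x^{\cdot2}=0$; hence on fixed points $*$ reduces to the para-product $x*y=\bar x\cdot\bar y$, and
\[
(e+x)*(e+x)=(\overline{e+x})^{\cdot2}=(e-x)^{\cdot2}=e-2x+x^{\cdot2}=e+x,
\]
so $e+x$ is a (nonzero) idempotent.

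The substance lies in the reverse inclusion, and its first step is the main obstacle: showing that every idempotent $f$ lies in $\Centr_{(\cO,*,n)}(e)$. In the split case this was handled in Proposition \ref{pr:quaternionic_singular} by conjugacy after extending scalars, which is unavailable here, because scalar extension destroys the condition $\dim_{\FF^3}g(\cO)=3$ and the idempotents are now pairwise non-conjugate. Instead I would invoke the rigidity established in the proof of Theorem \ref{th:Idempotents_Okubo_3} and in Remark \ref{re:dim_gO3}: in the case $\dim_{\FF^3}g(\cO)=3$ one has $\Aut(\cO,*,n)=\SStab_{\AAut(\cO,*,n)}(f)(\FF)$ for \emph{every} idempotent $f$, i.e. every rational automorphism fixes every idempotent. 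Since $\tau=\tau_e$ is itself an automorphism of $(\cO,*,n)$, it fixes $f$; therefore $f\in\Fix(\tau)=\Centr_{(\cO,*,n)}(e)$ by Proposition \ref{pr:fix_cent}.

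Once $f\in\Centr_{(\cO,*,n)}(e)=\cK\oplus\cK\cdot w$, the problem reduces to identifying the idempotents of this four-dimensional algebra, which is routine. Writing $f=a+z$ with $a\in\cK$, $z\in\cK\cdot w$, and using that on fixed points $f*f=\bar f^{\,\cdot2}=\bar a^{\cdot2}-n(a,e)\,z$, the equation $f*f=f$ splits into the $\cK$-part $\bar a^{\cdot2}=a$ and the radical part $\bigl(n(a,e)+1\bigr)z=0$. From $\bar a^{\cdot2}=a$ one gets $a^{\cdot2}=\bar a$, hence $a^{\cdot4}=a$, i.e. $a\cdot(a-e)^{\cdot3}=0$, whose only solutions in the étale algebra $\cK$ are $a=0$ and $a=e$. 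The value $a=0$ forces $z=0$ (the excluded element $0$), while for $a=e$ the trace $n(e,e)=-1$ makes the radical equation vacuous, leaving $z$ arbitrary. Hence $f\in e+\cK\cdot w=e+\rad\bigl(\Centr_{(\cO,*,n)}(e)\bigr)$, which together with the first inclusion completes the proof.
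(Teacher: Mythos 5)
Your proof is correct in substance and diverges from the paper's at the decisive step. The paper also reduces everything to the rigidity established in the proof of Theorem \ref{th:Idempotents_Okubo_3}, namely $\Aut(\cO,*,n)=\bfK(\FF)\times\bfN(\FF)$, but it exploits the \emph{whole} group: it computes $\Fix\bigl(\Aut(\cO,*,n)\bigr)=\FF 1\oplus\cK\cdot w=\FF 1\oplus\rad\bigl(\Centr_{(\cO,*,n)}(e)\bigr)$ and, since every idempotent is fixed by every automorphism, concludes at once that any idempotent lies in this three-dimensional space and hence, projecting onto $\FF 1$, must be of the form $e+x$ with $x$ in the radical. You instead apply the rigidity only to the single automorphism $\tau_e$, which confines $f$ to the four-dimensional algebra $\Fix(\tau_e)=\cK\oplus\cK\cdot w$, and then solve the idempotent equation there by hand; both routes are valid, the paper's buying a shorter endgame at the cost of identifying $\Fix\bigl(\bfK(\FF)\bigr)$ explicitly, yours needing less about the automorphism group but more computation. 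One slip in your endgame: the necessary condition $a\cdot(a-e)^{\cdot 3}=0$, i.e. $a^{\cdot 4}=a$, does \emph{not} have only the solutions $0$ and $e$ when $\cK\simeq\FF\times\FF$ --- the two nontrivial idempotents of $\cK$ satisfy it as well. You must go back to the full equation $\bar a^{\cdot 2}=a$ to exclude them; for instance, it gives $n(a)^2=n(a)$ and $a^{\cdot 3}=a\cdot\bar a=n(a)e$, so either $n(a)=1$ and $(a-e)^{\cdot 3}=0$ forces $a=e$ in the reduced algebra $\cK$, or $n(a)=0$ and $a=\bar a^{\cdot 2}=a^{\cdot 4}=0$. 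With that repair the argument closes as you describe.
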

\begin{proof}
Let $(\cO,\cdot,n)$ be the Cayley algebra where $x\cdot y=(e*x)*(y*e)$ as in \eqref{eq:exye}, where $1=e$. The proof of  Theorem \ref{th:Idempotents_Okubo_3} shows that any idempotent is fixed by $\Aut(\cO,*,n)=\bfK(\FF)\times\bfN(\FF)$. By Theorems \ref{th:quadratic_auto} and \ref{th:tauKa}, $\cO=\cK\oplus\cK\cdot w_1\oplus\cK\cdot w_2\oplus\cK\cdot w_3$ as in Example \ref{ex:tauKa}, and the automorphism $\tau:x\mapsto e*(e*x)$ fixes elementwise the quadratic \'etale algebra $\cK$ and permutes cyclically the $w_i$'s. Then, with $w=w_1+w_2+w_3$ we have $\Centr_{(\cO,*,n)}(e)=\cK \oplus\cK\cdot w$, and
\[
\begin{split}
\Fix\bigl(\Aut(\cO,*,n)\bigr)&=\Fix\bigl(\bfK(\FF)\bigr)\cap\Fix\bigl(\bfN(\FF)\bigr)\\
 &=\Fix\bigl(\bfK(\FF)\bigr)\cap\bigl(\cK\oplus\cK\cdot w\bigr)\\
 &=\FF 1\oplus \cK\cdot w=\FF 1\oplus\rad\bigl(\Centr_{(\cO,*,n)}(e)\bigr).
\end{split}
\]
Any idempotent in $\FF 1\oplus\rad\bigl(\Centr_{(\cO,*,n)}(e)\bigr)$ projects into an idempotent in $\FF 1$, and hence it is of the form $e+x$, with $x\in \rad\bigl(\Centr_{(\cO,*,n)}(e)\bigr)$. Conversely, any element of the form $e+x$ with $x\in \rad\bigl(\Centr_{(\cO,*,n)}(e)\bigr)$ satisfies $(e+x)^{*2}=e^{*2}+e*x+x*e=e-2x=e+x$.
\end{proof}

\bigskip
%-------------------------------------------
%
%-------------------------------------------

%-------------------------------------------

\end{document}